\newtheorem{theorem}{Theorem}
\newtheorem{corollary}{Corollary}[theorem]
\newtheorem{proposition}{Proposition}
\newtheorem{remark}{Remark}
\title{A note on Refracted Skew Brownian Motion with an application}
\newif\ifuniqueAffiliation
\author{ \href{https://orcid.org/0000-0003-0145-1252}{\includegraphics[scale=0.06]{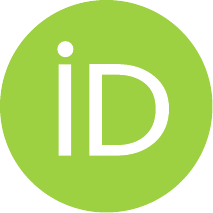}\hspace{1mm}Zaniar Ahmadi} \thanks{
We appreciate Natural Sciences and Engineering Research Council of Canada (NSERC) for its financial support under grant number RGPIN-2021-04100.
} \\
  Mathematics and Statistics Department\\
  Concordia University\\
  Montreal, Canada \\
  \texttt{zaniar.ahmadi@concordia.ca} \\
  %% examples of more authors
   \And
\href{https://orcid.org/0000-0002-8205-0217}{\includegraphics[scale=0.06]{orcid.pdf}\hspace{1mm} Xiaowen Zhou }
\footnotemark[1] \thanks{Corresponding author} \\
  Mathematics and Statistics Department\\
  Concordia University\\
  Montreal, Canada \\
  \texttt{xiaowen.zhou@concordia.ca } \\
 %  \And
 % Yuchen Lu \\
 %  School of Coumputing and Information\\
 %  University of Pittsburgh\\
 %  Pittsburgh, PA 15213 \\
 %  \texttt{yul217@pitt.edu} \\
  %% \AND
  %% Coauthor \\
  %% Affiliation \\
  %% Address \\
  %% \texttt{email} \\
  %% \And
  %% Coauthor \\
  %% Affiliation \\
  %% Address \\
  %% \texttt{email} \\
  %% \And
  %% Coauthor \\
  %% Affiliation \\
  %% Address \\
  %% \texttt{email} \\
}
\begin{document}
\maketitle
\begin{abstract}
For refracted skew Brownian motion (skew Brownian motion with two-valued drift),  adopting a perturbation approach we find expressions of its potential densities. As applications, we recover its transition density and study its long-time asymptotic behaviors. In addition, we also compare with previous results on transition densities for skew Brownian motions. We propose two approaches for generating quasi-random samples by approximating the cumulative distribution function and discuss their risk measurement application.
\end{abstract}

% keywords can be removed
\keywords{Skew Brownian motion \and Potential density \and Transition density \and Stochastic differential equation \and Quasi-random sampling \and Risk measurement}

%\maketitle

\section{Introduction}

Skew Brownian motion (SBM for short) was first introduced by It\^{o} and McKean in 1965 in Section two of their book \cite{ito2012diffusion}.  In 1978 Walsh \cite{walsh1978diffusion} proposed a path construction of skew Brownian motion and found an expression for the transition density of skew Brownian motion. Harrison and Shepp \cite{harrison1981skew} showed that skew Brownian motion is the weak limit of a symmetric random walk on the integers with modified behavior at the origin, and also proved that it is the unique strong solution to a stochastic differential equation driven by Brownian motion together with a singular drift of its local time at $0$. In 1984 Le Gall  \cite{LeGall19893} further established the existence and pathwise uniqueness of strong solutions to a more general stochastic differential equation with a general diffusion coefficient involving local times at different levels.
%governing a skew Brownian motion with drift.  
Appuhamillage et al. in \cite{appuhamillage2011occupation} for the first time analyzed the skew Brownian motion with one drift and provided the joint distribution of key functionals of skew Brownian motion. We refer to  Lejay in \cite{lejay2006constructions} for a review of earlier work related to skew Brownian motion.

%{\color{blue} Who first studied skew Brownian motion with drift or two-valued drift? (Gairat \cite{gairat2017density} for the first time introduced it)

%Check 
%One-dimensional stochastic differential equations
%involving a singular increasing process by Barlow and Perkins.- where can we use this?}

Skew Brownian motion is a versatile concept with applications spanning diverse domains. Notably, in mathematical finance, it serves as a fundamental tool for option pricing, as evidenced by studies such as those by Corns and Satchell. \cite{corns2007skew}, Wang et al. \cite{wang2015skew}, Gairat and Shcherbakov \cite{gairat2017density}, and Zhu and He \cite{zhu2018new}. Additionally, it finds applications in the interest rate and intensity modeling, with notable contributions from Jarrow et al. \cite{jarrow1997markov}, Duffie and Singleton. \cite{duffie1999modeling}, and Decamps et al. \cite{decamps2006self}.
Calibrating skew Brownian motion parameters has been the subject of intensive research. Bardou and Martinez \cite{bardou2010statistical} introduced a statistical estimator for calibrating the coefficients of a skewed process in the case in which it is reflected over a bounded interval.  Bai et al. \cite{bai2022bayesian} advocated for a Bayesian approach to estimate the skew coefficient and skew level of the skew Ornstein-Uhlenbeck (OU) process, and Lajay and Mazzonetto \cite{lejay2023maximum} proposed the use of Maximum Likelihood Estimators to calibrate the parameters of a skew Brownian motion.
Furthermore, skew Brownian motion is instrumental in addressing dispersion in heterogeneous media. Noteworthy contributions include works by Freidlin and Sheu \cite{freidlin2000diffusion}, Ramirez \cite{ramirez2011multi}, and Appuhamillage et al. \cite{appuhamillage2011occupation}. Moreover, it proves invaluable in modeling diffusion problems with spatially discontinuous diffusion coefficients, as highlighted in studies by Zhang \cite{zhang2000calculation} and Ascione et al. \cite{ascione2024modeling}.

It is an interesting problem to identify probability laws for solutions to singular SDEs.
The trivariate density of Brownian Motion and its local and occupation time were obtained in Karatzas and Shreve \cite{karatzas1984trivariate}  and applied to problems of stochastic control.
An expression for transition density of skew Brownian motion was first found by Walsh \cite{walsh1978diffusion}.
%Appuhamillage et al. in \cite{appuhamillage2011occupation} analyzed the skew Brownian motion with one drift and provided the joint distribution of key functionals of skew Brownian motion. In 2011, Ramirez \cite{ramirez2011multi} extended the notion of skew Brownian motion to multi-skewed Brownian motion???(I think this was first done by LeGall).
%{\color{blue}Who found the density of skew Brownian motion with alternate drift? - added} 
Berezin and Zayats \cite{berezin2017skew} found an expression of transition density for skew Brownian motion with alternating drift (or skew Caughey-Dieness process as mentioned in their paper) in \cite{berezin2017skew}.  
Gairat and Schenrbakov  \cite{gairat2017density} used an approximation method for obtaining the joint distribution of both Brownian motion and skew Brownian motion with two-valued drift and their functionals. To enhance precision, the researchers approximate skew Brownian motion with two-valued drift using a discrete Markov chain. Except at skew level zero, this Markov chain evolves in the same way as a simple random walk with unit jumps. Notably, the authors employ a complex and lengthy methodology to determine transition probabilities. However, significant details about this method are excluded from their explanation. The same authors obtained the probability density of skew Brownian motion with alternating drifts in Gairat and Schenrbakov \cite{gairat2022skew}. 
In 2023, Borodin \cite{borodin2023distributions} found the potential measure of skew Brownian motion with two-valued drift using the Laplace transform method to solve the associated partial differential equation, and calculated the distributions of integral its functionals with respect to spatial variable of local time.

%Skew Brownian motion was first introduced in   McKean, Walsh, Le Gall.  Review of Lejay. add references

%Different generalizations of skew Brownian motions. add references.

%Applications. add references.

%Previous results on transition density of skew Brownian motions starting with Walsh. add references.

In this paper, we find expressions of potential measures for refracted skew Brownian motion (RSBM) and then obtain an expression for transition density by inverting the Laplace transform. Applying the above results we further discuss the asymptotic behaviors of the skew Brownian motion with two-valued drift.
To this end we adopt a new perturbation approach of Chen et al. \cite{chen2024optimal} that is different from those in \cite{borodin2023distributions} and \cite{gairat2017density}.
More precisely, we take use of the solution of RSBM in Gao et al. \cite{gao2024optimal} and the previous results on the potential measure of Brownian motion with drift. We then discuss the long-term behaviors of the skew Brownian motion such as the stationary density. We also compare our results with previous similar results on Skew Brownian motions.
  
The rest of the paper is arranged as follows. In Section \ref{sec:sbm2drifts} we introduce the skew Brownian motion with two-valued drift and its exit problems. In Section \ref{Sec_Potential} we derive expressions for the potential measures applying solutions to the exit problems together with a perturbation approach. In Section \ref{sec:Laplace_sbm} we invert the Laplace transform to find explicit formulas for the transition densities. In Section \ref{sec:limitdensity} we discuss the limiting behaviors of the refracted skew-Brownian motion. Finally, In section \ref{sec:approx_simu} we propose two approaches to estimate the cumulative distribution function to generate quasi-random samples of SBM. These approximations are also used to estimate two conventional risk metrics, Value at Risk (VaR) and Conditional Value at Risk (CVaR).

\section{Refracted Skew Brownian Motion and its exit problems}
\label{sec:sbm2drifts}
In this section, we consider a refracted skew Brownian motion, $X= (X_t)_{t \geq 0}$, defined on a filtered probability space $(\mathcal{F}, (\mathcal{F}_t)_{t \geq 0}, \mathds{P})$, which is the pathwise unique solution to SDE
\begin{equation}
\label{sde0_2drifts}
    X_t = X_0 + B_t + \beta L^X(t,a) + \mu_+ \int_{0}^{t} \mathds{1}_{\{X_s >a \} }\mathrm{d}s + \mu_- \int_{0}^{t} \mathds{1}_{\{X_s <a \} }\mathrm{d}s , \hspace{1cm} t\geq 0
\end{equation}
where $X_0, \mu_+, \mu_-\in \mathbb{R}$ and $a$ is skew level, and $\beta \in (-1,1)$, $B$ denotes a standard Brownian motion and 
\[
L^X(t,a) := \lim_{\epsilon \rightarrow 0^+} \frac{1}{2 \epsilon} \text{Leb}(s \leq t : |X_s-a| \leq \epsilon),
\]
denotes the symmetric local time process for $X$ at level $a$ up to time $t\in \mathds{R}_+$, see e.g. Le Gall \cite{LeGall19893} for existence and uniqueness of solution to (\ref{sde0_2drifts}).

\begin{remark}
    Note that for $\sigma>0$, $Y$ solves SDE
    \[    Y_t = Y_0 + \sigma B_t + \beta\sigma L^Y(t, \sigma a) + \sigma \mu_+ \int_{0}^{t} \mathds{1}_{\{Y_s >\sigma a \} }\mathrm{d}s + \sigma\mu_- \int_{0}^{t} \mathds{1}_{\{Y_s <\sigma a \} }\mathrm{d}s \]
    if and only if $X_t:=Y_t/\sigma$ solves SDE
    \[    X_t = Y_0/\sigma +  B_t + \beta L^X(t,a) + \mu_+ \int_{0}^{t} \mathds{1}_{\{X_s >a\} }\mathrm{d}s + \mu_- \int_{0}^{t} \mathds{1}_{\{X_s <a\} }\mathrm{d}s. \]
\end{remark}

For a boundary $ y \in \mathds{R}$, define the first hitting time of the boundary $y$ for the stochastic process $X_t$ to be 
\[ 
\tau_y := \inf\{t \geq 0: X_t=y \},
\]
with the convention $\inf \emptyset = \infty$. We denote the Laplace transform of $\tau_y$ as $\mathds{E}_x[e^{-q\tau_y}]$ for a fixed $q \geq 0$. For any interval $[y,z]$, where $x,y,z \in \mathds{R}$ and $y\leq x < z$, we denote the first hitting time for the lower barrier $y$ and upper barrier $z$ for the stochastic process $\{X_t\}_{\{t \geq 0\}}$ as $\tau_y$ and $\tau_z$, respectively. Then define the first exit time of the interval $(y,z)$ by
\[
\tau_{y,z} := \tau_y \wedge \tau_z = \min\{\tau_y, \tau_z\}.
\]
Moreover, the Laplace transform of $\tau_y$ for $\tau_y < \tau_z$ is expressed as $\mathds{E}_x[e^{-q\tau_y}; \tau_y < \tau_z]$. The Laplace transform of $\tau_z$ for $\tau_z < \tau_y$ is expressed as $\mathds{E}_x[e^{-q\tau_z}; \tau_z < \tau_y]$. It can be interpreted as the Laplace transform of the time of ruin before reaching the lower barrier $y$ and plays a key role in the problems of negative duration and occupation time.\\
Note that SDE (\ref{sde0_2drifts}) has a pathwise unique solution. To obtain the Laplace transform of some exit times, we first need to find the general solutions $g_{1,q}(x)$ and $g_{2,q}(x)$ in $C(\mathds{R}) \cap C^2(\mathds{R} \setminus \{a\})$ to the following equation:
\begin{equation*}
    \frac{1}{2} g''(x) + \mu_{+} \mathds{1}_{\{ x>a \}} g' (x) + \mu_{-} \mathds{1}_{\{ x<a \}} g' (x)  = q g(x), 
\end{equation*}
with $(1+\beta)g^{\prime}(a^+) = (1-\beta) g^{\prime}(a^-)$ for $\beta \in (-1,1)$ and $q \geq 0$.

Let $\delta^- = \sqrt{\mu^2_- + 2q}$, and $\rho_1^- = -(\mu_- + \delta^-)$ and $\rho_2^- = -\mu_- + \delta^-$ be the root of equation $\frac{1}{2}x^2+\mu_-x -q=0$, where $\rho_1^-<0<\rho_2^-$. Beside that, let $\delta^+ = \sqrt{\mu^2_+ + 2q}$, and $\rho_1^+ = -(\mu_+ + \delta^+)$ and $\rho_2^+ = -\mu_+ + \delta^+$ be the root of equation $\frac{1}{2}x^2+\mu_+x -q=0$, where $\rho_1^+<0<\rho_2^+$. Let
\begin{equation}
    g_{1,q}(x) = e^{\rho^+_1(x-a)} \mathbf{1}_{\{ x>a \}}
    + \Big( c_1(q) e^{\rho^-_2(x-a)} + (1-c_1(q)) e^{\rho^-_1(x-a)} \Big) \mathbf{1}_{\{ x \leq a \}}
\end{equation}
and 
\begin{equation}
    g_{2,q}(x) = \Big( (1-c_2(q)) e^{\rho^+_2(x-a)} + c_2(q) e^{\rho^+_1(x-a)} \Big)  \mathbf{1}_{\{ x>a \}} + e^{\rho^-_2(x-a)}  \mathbf{1}_{\{ x \leq a \}},
\end{equation}
where
\[
c_1(q) = \frac{(1+\beta) \rho^+_1 - (1-\beta) \rho^-_1}{(1-\beta)(\rho^-_2 - \rho^-_1)}
\]
and
\[
c_2(q) = \frac{(1+\beta)\rho^+_2 - (1-\beta) \rho^-_2}{(1+\beta)(\rho^+_2 - \rho^+_1)}.
\]
The following two theorems are presented by Gao et al. \cite{gao2024optimal} and serve as the foundation for derivations in the preceding sections. 
\begin{theorem} \label{thm1_zhou}
    For $x,y,z \in \mathds{R}$ and $y \leq x < z$, we have 
    \begin{eqnarray}
         \mathds{E}_x[e^{-q\tau_z}; \tau_z < \tau_y] = \frac{w(x,y)}{w(z,y)}
    \end{eqnarray}
    and 
    \begin{eqnarray}
         \mathds{E}_x[e^{-q\tau_y}; \tau_y < \tau_z] = \frac{w(x,z)}{w(y,z)},
    \end{eqnarray}
    where 
    \[
    w(x,y) := g_{2,q}(x)g_{1,q}(y) - g_{1,q}(x)g_{2,q}(y).
    \]
\end{theorem}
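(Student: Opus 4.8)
The plan is to prove both identities by the standard optional-stopping argument for the generator equation, using $g_{1,q}$ and $g_{2,q}$ as a fundamental pair of solutions. For the first identity I would look for the function $h$ on $[y,z]$ solving the generator equation $\frac{1}{2}h'' + \mu_+\mathds{1}_{\{x>a\}}h' + \mu_-\mathds{1}_{\{x<a\}}h' = qh$ together with the skew transmission condition $(1+\beta)h'(a^+) = (1-\beta)h'(a^-)$ and the boundary data $h(y)=0$, $h(z)=1$. Writing $h = A g_{1,q} + B g_{2,q}$ and solving the two boundary equations by Cramer's rule yields $h(x) = w(x,y)/w(z,y)$, where the common denominator is exactly $w(z,y) = g_{2,q}(z)g_{1,q}(y)-g_{1,q}(z)g_{2,q}(y)$. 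Once $h$ is identified, the probabilistic content reduces to showing $h(x) = \mathds{E}_x[e^{-q\tau_{y,z}}h(X_{\tau_{y,z}})]$; since $X$ exits the bounded interval in finite time a.s. and $h(X_{\tau_{y,z}})$ equals $1$ on $\{\tau_z<\tau_y\}$ and $0$ on $\{\tau_y<\tau_z\}$, this is precisely $\mathds{E}_x[e^{-q\tau_z};\tau_z<\tau_y]$. The second identity follows by swapping the boundary data to $h(y)=1$, $h(z)=0$.

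The analytic core is the martingale identity. For any solution $g\in\{g_{1,q},g_{2,q}\}$ I would apply the It\^{o}--Tanaka (Meyer--It\^{o}) formula to $e^{-qt}g(X_t)$, noting that $g\in C(\mathds{R})\cap C^2(\mathds{R}\setminus\{a\})$ with a possible kink only at $a$ and that $\langle X\rangle_t = t$. Collecting the absolutely continuous terms gives $e^{-qt}\big(\frac{1}{2}g'' + \mu_+\mathds{1}_{\{x>a\}}g' + \mu_-\mathds{1}_{\{x<a\}}g' - qg\big)(X_t)\,\mathrm{d}t$, which vanishes by the generator equation. The delicate part is the coefficient of the local time $L^X(t,a)$: the Tanaka correction contributes $\frac{1}{2}(g'(a^+)-g'(a^-))$, while the drift term $\beta\,\mathrm{d}L^X(t,a)$ of the SDE, integrated against the symmetric one-sided derivative $\frac{1}{2}(g'(a^+)+g'(a^-))$, contributes $\frac{\beta}{2}(g'(a^+)+g'(a^-))$. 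Their sum is $\frac{1}{2}\big[(1+\beta)g'(a^+)-(1-\beta)g'(a^-)\big]$, which is zero precisely because $g_{1,q}$ and $g_{2,q}$ were constructed (through $c_1(q),c_2(q)$) to satisfy the skew condition. Hence $e^{-qt}g(X_t)$ has no drift and no local-time part, so the stopped process $e^{-q(t\wedge\tau_{y,z})}g(X_{t\wedge\tau_{y,z}})$ is a bounded (thanks to continuity of $g$ on the compact $[y,z]$) martingale to which optional stopping applies.

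With the martingale property in hand, linearity and optional stopping give $h(x)=A g_{1,q}(x)+B g_{2,q}(x)=\mathds{E}_x[e^{-q\tau_{y,z}}h(X_{\tau_{y,z}})]$, and substituting the Cramer's-rule coefficients $A=-g_{2,q}(y)/w(z,y)$, $B=g_{1,q}(y)/w(z,y)$ collapses the numerator to $g_{2,q}(x)g_{1,q}(y)-g_{1,q}(x)g_{2,q}(y)=w(x,y)$, proving the first display. A last technical check is that the common denominator $w(z,y)$ does not vanish for $y<z$: since $w(\cdot,y)$ is itself a solution of the generator equation vanishing at $y$, the needed non-degeneracy is equivalent to the non-oscillation of that ODE on $[y,z]$, which holds because the fundamental solutions are combinations of real exponentials $e^{\rho_i^{\pm}(x-a)}$.

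I expect the main obstacle to be the rigorous bookkeeping of the local-time term in the It\^{o}--Tanaka formula --- in particular, justifying that the symmetric local time pairs with the average $\frac{1}{2}(g'(a^+)+g'(a^-))$ of the one-sided derivatives, so that the drift's $\beta\,\mathrm{d}L^X$ contribution and the Tanaka jump contribution combine into the skew condition. Everything downstream (solving the $2\times2$ system, identifying $w$, and optional stopping) is routine once this cancellation is established.
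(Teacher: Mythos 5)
You should first be aware that the paper contains no proof of this theorem at all: it is imported as a known result from Gao et al.\ \cite{gao2024optimal} (``The following two theorems are presented by Gao et al.\ ...''), so there is no in-paper argument to compare yours against. Judged on its own merits, your proof is correct and follows the standard route for exit problems of skew diffusions. The crux is exactly where you located it: in the symmetric It\^o--Tanaka formula the measure-part of $g''$ contributes the Tanaka jump $\frac{1}{2}\bigl(g'(a^+)-g'(a^-)\bigr)\,\mathrm{d}L^X(t,a)$, while the SDE's singular drift $\beta\,\mathrm{d}L^X(t,a)$ is integrated against the symmetric derivative $\frac{1}{2}\bigl(g'(a^+)+g'(a^-)\bigr)$ (this pairing is what ``symmetric local time'' buys you), and the two combine into $\frac{1}{2}\bigl[(1+\beta)g'(a^+)-(1-\beta)g'(a^-)\bigr]\,\mathrm{d}L^X(t,a)$, which vanishes precisely by the transmission condition built into $c_1(q)$ and $c_2(q)$. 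This cancellation is the entire reason the condition is stated as $(1+\beta)g'(a^+)=(1-\beta)g'(a^-)$, and your bookkeeping of it is right. The downstream steps (boundedness of the stopped process $e^{-q(t\wedge\tau_{y,z})}g(X_{t\wedge\tau_{y,z}})$, optional stopping, Cramer's rule collapsing the numerator to $w(x,y)$) are routine and correctly executed.

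Two points worth tightening. First, for $q>0$ you do not actually need $\tau_{y,z}<\infty$ a.s.: letting $t\to\infty$ in $\mathds{E}_x\bigl[e^{-q(t\wedge\tau_{y,z})}h(X_{t\wedge\tau_{y,z}})\bigr]=h(x)$, the event $\{\tau_{y,z}=\infty\}$ contributes nothing by bounded convergence; a.s.-finiteness of the exit time is only needed if one insists on $q=0$ (where the statement can in any case degenerate, e.g.\ $w\equiv 0$ when $\mu_\pm=0$, so the substantive case is $q>0$). Second, the non-vanishing of $w(z,y)$ is cleaner than a non-oscillation appeal: for $q>0$, a solution of the generator equation vanishing at both $y$ and $z$ must vanish identically on $[y,z]$ by the maximum principle (a positive interior maximum would force $\frac{1}{2}h''+\mu h'\le 0 < qh$ at the maximum on the smooth pieces, and the skew condition rules out a maximum at $a$), and linear independence of $g_{1,q}$ (decreasing) and $g_{2,q}$ (increasing) then forces the determinant $w(z,y)$ to be nonzero.
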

\begin{theorem} \label{thm2_zhou}
    For $x,r \in \mathds{R}$ and $x \geq r$,
    \begin{eqnarray} \label{lap1}
        \mathds{E}_x[e^{-q\tau_r}] = e^{\rho^+_1(x-r)}\mathbf{1}_{\{ r>a \}} + \frac{g_{1,q}(x)}{ c_1(q) e^{\rho^-_2(r-a)} + (1-c_1(q)) e^{\rho^-_1(r-a)}}\mathbf{1}_{\{ r\leq a \}}
    \end{eqnarray}
 and for $ x < r$, 
  \begin{eqnarray} \label{lap2}
        \mathds{E}_x[e^{-q\tau_r}] = e^{\rho^-_2(x-r)}\mathbf{1}_{\{ r\leq a \}} + \frac{g_{2,q}(x)}{(1-c_2(q)) e^{\rho^+_2(r-a)} + c_2(q) e^{\rho^+_1(r-a)}}\mathbf{1}_{\{ r> a \}}.
    \end{eqnarray}
\end{theorem}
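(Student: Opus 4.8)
The plan is to obtain the one-sided Laplace transforms (\ref{lap1}) and (\ref{lap2}) by sending one of the two barriers in Theorem \ref{thm1_zhou} off to infinity, so that the two-sided exit identity degenerates into a single-barrier hitting identity. Concretely, for $x \geq r$ I would fix the lower barrier at $y = r$ and let the upper barrier $z \uparrow +\infty$ in the identity $\mathds{E}_x[e^{-q\tau_y}; \tau_y < \tau_z] = w(x,z)/w(y,z)$; symmetrically, for $x < r$ I would fix the upper barrier at $z = r$ and let the lower barrier $y \downarrow -\infty$ in $\mathds{E}_x[e^{-q\tau_z}; \tau_z < \tau_y] = w(x,y)/w(z,y)$.

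The first step is to justify passing to the limit on the probabilistic side. Since $X$ has continuous paths, $\tau_z \uparrow +\infty$ as $z \uparrow +\infty$ and $\tau_y \uparrow +\infty$ as $y \downarrow -\infty$, so the events $\{\tau_r < \tau_z\}$ increase to $\{\tau_r < \infty\}$. For $q>0$ we then have $e^{-q\tau_r}\mathds{1}_{\{\tau_r < \tau_z\}} \uparrow e^{-q\tau_r}\mathds{1}_{\{\tau_r < \infty\}}$, and monotone convergence gives $\lim_{z \to \infty}\mathds{E}_x[e^{-q\tau_r}; \tau_r < \tau_z] = \mathds{E}_x[e^{-q\tau_r}; \tau_r < \infty] = \mathds{E}_x[e^{-q\tau_r}]$, the last equality because $e^{-q\tau_r} = 0$ on $\{\tau_r = \infty\}$; the boundary case $q = 0$ follows by letting $q \downarrow 0$.

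The second step is the asymptotic analysis of $w(x,z)/w(r,z) = \bigl(g_{2,q}(x)g_{1,q}(z) - g_{1,q}(x)g_{2,q}(z)\bigr)/\bigl(g_{2,q}(r)g_{1,q}(z) - g_{1,q}(r)g_{2,q}(z)\bigr)$ as $z \to \infty$. For large $z > a$ one has $g_{1,q}(z) = e^{\rho^+_1(z-a)} \to 0$, while $g_{2,q}(z)$ is governed by its growing term $(1 - c_2(q)) e^{\rho^+_2(z-a)}$; after checking that the leading coefficient $1 - c_2(q)$ is nonzero — it is strictly positive for $q>0$, since its numerator equals $(1+\beta)(\mu_+ + \delta^+) + (1-\beta)(\delta^- - \mu_-) > 0$ — the common factor $g_{2,q}(z)$ cancels and the ratio tends to $g_{1,q}(x)/g_{1,q}(r)$. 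An entirely symmetric computation as $y \to -\infty$, where $g_{2,q}(y) \to 0$ and $g_{1,q}(y)$ is dominated by $(1 - c_1(q)) e^{\rho^-_1(y-a)}$, yields $g_{2,q}(x)/g_{2,q}(r)$.

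It then remains to unfold the definitions of $g_{1,q}$ and $g_{2,q}$ at the point $r$, splitting according to $r > a$ or $r \leq a$. For $x \geq r$ the quotient $g_{1,q}(x)/g_{1,q}(r)$ reduces to $e^{\rho^+_1(x-r)}$ when $r > a$ (the exponentials telescope) and has denominator $c_1(q) e^{\rho^-_2(r-a)} + (1 - c_1(q)) e^{\rho^-_1(r-a)}$ when $r \leq a$, which is exactly (\ref{lap1}); the analogous split of $g_{2,q}(x)/g_{2,q}(r)$ reproduces (\ref{lap2}). I expect the main obstacle to be the justification of the interchange of limit and expectation together with verifying that the leading exponential coefficients $1-c_1(q)$ and $1-c_2(q)$ do not vanish, since only then does the cancellation that produces the clean ratios $g_{1,q}(x)/g_{1,q}(r)$ and $g_{2,q}(x)/g_{2,q}(r)$ go through; the remaining algebra is routine.
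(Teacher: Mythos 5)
There is nothing in the paper to compare your proof against: the authors do not prove Theorem \ref{thm2_zhou} (or Theorem \ref{thm1_zhou}); both are quoted from Gao et al.\ \cite{gao2024optimal} and used as black boxes. Judged on its own merits, your derivation is correct and is the natural way to get the one-barrier identities from the two-barrier ones. All three ingredients check out. The monotone-convergence step is sound: $z\mapsto\tau_z$ is monotone, path continuity gives $\tau_z\uparrow\infty$ as $z\uparrow\infty$, so $\{\tau_r<\tau_z\}\uparrow\{\tau_r<\infty\}$ and the left-hand side converges to $\mathds{E}_x[e^{-q\tau_r}]$ for $q>0$. Your non-vanishing check is the crux and is right: $1-c_2(q)=\bigl[(1+\beta)(\mu_++\delta^+)+(1-\beta)(\delta^--\mu_-)\bigr]/\bigl[2(1+\beta)\delta^+\bigr]>0$ for $q>0$, and the same numerator over $2(1-\beta)\delta^-$ shows $1-c_1(q)>0$; hence $g_{1,q}(z)/g_{2,q}(z)\to 0$ as $z\to\infty$ and $g_{2,q}(y)/g_{1,q}(y)\to 0$ as $y\to-\infty$, so $w(x,z)/w(r,z)\to g_{1,q}(x)/g_{1,q}(r)$ and $w(x,y)/w(r,y)\to g_{2,q}(x)/g_{2,q}(r)$. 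Unfolding $g_{1,q}(r)$ and $g_{2,q}(r)$ according to $r>a$ versus $r\leq a$ then reproduces (\ref{lap1}) and (\ref{lap2}) exactly, including the telescoping $g_{1,q}(x)/g_{1,q}(r)=e^{\rho_1^+(x-r)}$ when $x\geq r>a$.

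Two points deserve to be made explicit. First, the collapse of the ratio requires $g_{1,q}(r)\neq 0$ (resp.\ $g_{2,q}(r)\neq 0$). This is true, and in fact follows from the same computation you already did: $c_1(q)<1$ forces $g_{1,q}>0$ on $(-\infty,a]$, since a zero at $x\leq a$ would require $c_1(q)e^{2\delta^-(x-a)}=c_1(q)-1$, which is impossible when $c_1(q)<1$ (check the sign cases $c_1\leq 0$ and $0<c_1<1$ separately); the symmetric argument with $c_2(q)<1$ gives $g_{2,q}>0$ on $(a,\infty)$. But it is not automatic from what you wrote, and without it the limit statement is not justified. Second, the boundary case $q=0$ is more delicate than "let $q\downarrow 0$": when $\mu_-=0$ (resp.\ $\mu_+=0$) one has $\delta^-(0)=0$ (resp.\ $\delta^+(0)=0$), so $c_1(0)$ (resp.\ $c_2(0)$) is not even defined and the stated formula at $q=0$ must itself be read as a limit; your monotone convergence handles the probabilistic side, but continuity of the right-hand side as $q\downarrow 0$ (including these degenerate drift cases) needs a line of justification. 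Neither point is a flaw in the strategy — both are short additions — and the remaining algebra is, as you say, routine.
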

% We can write (\ref{lap1}) as
% \begin{equation}
% \mathds{E}_x[e^{-q\tau_r}] =
%     \begin{cases}
%         e^{\rho^+_1(x-r)},  &  a \leq r \leq x ,\\
%          & \\
%         \frac{e^{\rho^+_1(x-a)}}{c_1(q) e^{\rho^-_2(r-a)} + (1-c_1(q)) e^{\rho^-_1(r-a)}}, &  r \leq a \leq x ,\\
%          & \\
%          \frac{c_1(q) e^{\rho^-_2(x-a)} + (1-c_1(q)) e^{\rho^-_1(x-a)}}{c_1(q) e^{\rho^-_2(r-a)} + (1-c_1(q)) e^{\rho^-_1(r-a)}},  & r \leq x \leq a.
%     \end{cases}
% \end{equation}
% We can also write (\ref{lap2}) as
% \begin{equation}
% \mathds{E}_x[e^{-q\tau_r}] =
%     \begin{cases}
%         e^{\rho^-_2(x-r)},  &  x < r < a,\\
%          & \\
%         \frac{e^{\rho^-_2(x-a)}}{(1-c_2(q)) e^{\rho^+_2(r-a)} + c_2(q) e^{\rho^+_1(r-a)}}, &  x < a < r, \\
%          & \\
%          \frac{(1-c_2(q)) e^{\rho^+_2(x-a)} + c_2(q) e^{\rho^+_1(x-a)}}{(1-c_2(q)) e^{\rho^+_2(r-a)} + c_2(q) e^{\rho^+_1(r-a)}},  & a < x < r.
%     \end{cases}
% \end{equation}

\section{Potential measures of RSBM}\label{Sec_Potential}
Throughout this paper, we assume $e_q$ has an exponential distribution with intensity $q$ that is independent of $X_t$.

Using the results of Theorem \ref{thm1_zhou} and Theorem \ref{thm2_zhou}, the memoryless property of exponential distribution, and the Markov property of skew Brownian motion, we can derive the following theorem. 
\begin{theorem}
    The potential measure of the refracted skew Brownian motion $X_{t}$ with skew level $a=0$ and starting point $x=0$ is given by:
    \begin{equation} \label{eq:pmeasure_x0}
\mathds{P}(X_{e_q}\in \mathrm{d}y) =
    \begin{cases}
        \frac{2(1+\beta)q}{ (1+\beta) \mu_+ - (1-\beta) \mu_- + (1+\beta)\sqrt{\mu_+^2 + 2q} + (1-\beta) \sqrt{\mu_-^2+2q}}  e^{(\mu_+ -  \sqrt{\mu^2_+ + 2q})y}\mathrm{d}y,  &  y> 0, \\
         & \\
        \frac{2(1-\beta)q}{ (1+\beta) \mu_+ - (1-\beta) \mu_- + (1+\beta)\sqrt{\mu_+^2 + 2q} + (1-\beta) \sqrt{\mu_-^2+2q}}  e^{(\mu_- +  \sqrt{\mu^2_- + 2q})y} \mathrm{d}y, &  y<0. 
    \end{cases}
\end{equation}
\end{theorem}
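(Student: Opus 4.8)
The plan is to identify the potential measure with the $q$-resolvent (potential) density $u_q$ of $X$ through $\mathds{P}_0(X_{e_q}\in\mathrm{d}y)=q\,u_q(0,y)\,\mathrm{d}y$, and to assemble $u_q(0,y)$ out of first-passage quantities already supplied by Theorems \ref{thm1_zhou} and \ref{thm2_zhou}. The scaffold is a first-passage factorization: since $X$ has continuous paths, hitting the level $y$ is necessary in order to sit at $y$ at the independent exponential time $e_q$, so conditioning on $\mathcal{F}_{\tau_y}$ and combining the strong Markov property with the memoryless property of $e_q$ gives $u_q(0,y)=\mathds{E}_0[e^{-q\tau_y}]\,u_q(y,y)$ for every $y\neq 0$. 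This splits the task into two ingredients: the one-sided hitting transform $\mathds{E}_0[e^{-q\tau_y}]$ and the diagonal value $u_q(y,y)$.

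The first ingredient is immediate from Theorem \ref{thm2_zhou}. For $y>0$ I apply (\ref{lap2}) with $x=0<r=y$ and $r>a=0$, which yields $\mathds{E}_0[e^{-q\tau_y}]=g_{2,q}(0)/\big((1-c_2(q))e^{\rho^+_2 y}+c_2(q)e^{\rho^+_1 y}\big)=1/g_{2,q}(y)$, using the continuity normalization $g_{2,q}(0)=1$. Symmetrically, for $y<0$ I apply (\ref{lap1}) with $x=0>r=y$ and $r\le a=0$ to obtain $\mathds{E}_0[e^{-q\tau_y}]=1/g_{1,q}(y)$, with $g_{1,q}(0)=1$.

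The second and harder ingredient is $u_q(y,y)$. I would extract it from Theorem \ref{thm1_zhou} by a shrinking two-sided exit argument: writing the resolvent density killed on exiting $(y-\epsilon,y+\epsilon)$ in terms of $u_q$ and the exit transforms $\mathds{E}_y[e^{-q\tau_{y\pm\epsilon}};\,\cdot\,]$ of Theorem \ref{thm1_zhou}, and recovering $u_q(y,y)$ as an $\epsilon\downarrow 0$ local-time limit. Equivalently, and more quickly, one observes that as a function of $x$ the density $u_q(\cdot,y)$ solves $\big(\frac{1}{2}\partial_{xx}+\mu_\pm\partial_x-q\big)u=0$ off $\{y\}$, is proportional to the increasing solution $g_{2,q}$ below $y$ and to the decreasing solution $g_{1,q}$ above $y$, and carries the unit jump $\partial_x u(y^+,y)-\partial_x u(y^-,y)=-2$ from the $\frac{1}{2}\partial_{xx}$ term; solving the continuity/jump system gives $u_q(y,y)=2\,g_{1,q}(y)g_{2,q}(y)/\mathcal{W}(y)$ with $\mathcal{W}(y):=g_{2,q}'(y)g_{1,q}(y)-g_{1,q}'(y)g_{2,q}(y)$. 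This is precisely where the skew transmission condition $(1+\beta)g'(a^+)=(1-\beta)g'(a^-)$ enters, through $c_1(q),c_2(q)$, and where the perturbation reading is visible: away from $a$ the diagonal value reduces to the Brownian-with-drift potential $1/\delta^\pm$, and $u_q(0,y)$ is the corresponding one-sided drifted-Brownian potential rescaled by a skew-dependent factor.

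Finally I would assemble and simplify. Combining the two ingredients gives $u_q(0,y)=2\,g_{1,q}(y)/\mathcal{W}(y)$ for $y>0$; computing the Wronskian on $\{y>0\}$, where $g_{1,q}(y)=e^{\rho^+_1 y}$ and $g_{2,q}(y)=(1-c_2(q))e^{\rho^+_2 y}+c_2(q)e^{\rho^+_1 y}$, gives $\mathcal{W}(y)=2(1-c_2(q))\delta^+ e^{-2\mu_+ y}$, whence $q\,u_q(0,y)=\frac{q}{(1-c_2(q))\delta^+}\,e^{(\mu_+-\delta^+)y}$ with exponent $\mu_+-\delta^+=\mu_+-\sqrt{\mu_+^2+2q}$. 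The case $y<0$ is the mirror image, producing $e^{(\mu_-+\delta^-)y}$ and the factor $1/\big((1-c_1(q))\delta^-\big)$. The remaining obstacle is then purely algebraic: showing that $1-c_2(q)$ collapses to $\big((1+\beta)\mu_+-(1-\beta)\mu_-+(1+\beta)\delta^++(1-\beta)\delta^-\big)/(2(1+\beta)\delta^+)$, and symmetrically for $1-c_1(q)$, so that both branches share the common denominator in (\ref{eq:pmeasure_x0}) with the clean $(1+\beta)$ versus $(1-\beta)$ weighting. I expect the genuine difficulty to lie in pinning down $u_q(y,y)$ — the local-time/Wronskian step together with the correct handling of the skew boundary condition — rather than in these final simplifications.
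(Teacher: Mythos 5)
Your argument is correct --- I checked the hitting transforms $\mathds{E}_0[e^{-q\tau_y}]=1/g_{2,q}(y)$ for $y>0$ and $1/g_{1,q}(y)$ for $y<0$ (using $g_{1,q}(0)=g_{2,q}(0)=1$), the Wronskian $\mathcal{W}(y)=2(1-c_2(q))\delta^+e^{-2\mu_+ y}$ on $\{y>0\}$ and its mirror $2(1-c_1(q))\delta^-e^{-2\mu_- y}$ on $\{y<0\}$, and the simplifications of $1-c_2(q)$ and $1-c_1(q)$, and you land exactly on the paper's intermediate expression $\frac{q}{\delta^+(1-c_2(q))}e^{(\mu_+-\delta^+)y}$ --- but your route is genuinely different from the paper's. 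The paper stays entirely at the probabilistic level and never constructs a Green function: for $0<\epsilon<y$ it writes $\mathds{P}(X_{e_q}\in\mathrm{d}y)=\mathds{E}e^{-q\tau_\epsilon}\bigl(\mathds{E}_\epsilon e^{-q\tau_0}\,\mathds{P}(X_{e_q}\in\mathrm{d}y)+\mathds{P}_\epsilon(e_q<\tau_0^{1},X_{e_q}^{1}\in\mathrm{d}y)\bigr)$, a renewal decomposition over excursions from the skew level via the strong Markov and memoryless properties, solves this linear equation for $\mathds{P}(X_{e_q}\in\mathrm{d}y)$, and evaluates the resulting $0/0$ ratio as $\epsilon\to0^+$ by l'H\^opital's rule, with Theorem \ref{thm2_zhou} supplying the hitting transforms and the killed potential of Brownian motion with drift $\mu_+$ supplying the numerator. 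You instead decompose at the target level $y$ rather than at the skew level, $u_q(0,y)=\mathds{E}_0[e^{-q\tau_y}]\,u_q(y,y)$, and obtain $u_q(y,y)$ from the classical Sturm--Liouville description of the resolvent density: proportional to $g_{2,q}$ below $y$ and to $g_{1,q}$ above, continuous, with derivative jump $-2$ across $y$. What the paper's method buys is self-containedness: beyond Theorems \ref{thm1_zhou} and \ref{thm2_zhou} it uses only elementary limits, which is precisely the ``perturbation'' approach it advertises. What your method buys is structure and economy: the Wronskian construction yields $u_q(x,y)$ for every starting point $x$ at once (so Proposition \ref{prop:pmeasure_x} would follow with no additional work, and your first-passage factorization step is in fact redundant, since evaluating your Green function at $x=0$ already gives $u_q(0,y)=2g_{1,q}(y)/\mathcal{W}(y)$), and it avoids the $\epsilon$-limit and l'H\^opital entirely. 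The cost is that you lean on standard but nontrivial facts about resolvent densities of regular one-dimensional diffusions that the paper never establishes --- existence and continuity of $u_q$, the distributional identity $(q-\mathcal{L})u_q(\cdot,y)=\delta_y$ underlying the jump $-2$, and the fact that $u_q(\cdot,y)$ in the backward variable must satisfy the skew transmission condition $(1+\beta)u'(0^+)=(1-\beta)u'(0^-)$; these do hold for this process, so there is no gap, but in a complete write-up they would need to be proved or cited (e.g.\ from It\^o--McKean).
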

\begin{proof}
 Let $X_t^{1} = \mu_+ t + B_t$ be a Brownian motion and let $\tau_x^{1}$ be the first hitting time of process $X_t^{1}$. For $0<\epsilon <y$, by the Markov property and the memoryless property,
\begin{equation}
    \begin{split}
    \mathds{P}(X_{e_q} \in\mathrm{d}y) &= \mathds{P}(\tau_\epsilon<e_q, X_{e_q} \in\mathrm{d}y) \\ 
    &= \mathds{P}(\tau_\epsilon<e_q)\mathds{P}_{\epsilon}(X_{e_q} \in\mathrm{d}y) \\
    &= \mathds{E}e^{-q\tau_\epsilon} \Big( \mathds{P}_\epsilon(\tau_0<e_q, X_{e_q} \in\mathrm{d}y) + \mathds{P}_\epsilon(e_q<\tau_0, X_{e_q} \in\mathrm{d}y) \Big) \\ 
    &= \mathds{E}e^{-q\tau_\epsilon} \Big( \mathds{E}_\epsilon e^{-q\tau_0} \mathds{P}(X_{e_q} \in\mathrm{d}y) + \mathds{P}_\epsilon(e_q < \tau_0^{1}, X_{e_q}^{1} \in \mathrm{d}y) \Big).
    \end{split}
\end{equation}
Solving the above equation we have
\begin{equation} \label{limit3}
    \begin{split}
   \int_{0}^{\infty} q e^{-qt}\mathds{P}(X_t\in\mathrm{d}y) \mathrm{d}t &= \mathds{P}(X_{e_q}\in\mathrm{d}y) \\
   % &= \frac{\mathds{E}e^{-q\tau_\epsilon} \mathds{P}_\epsilon(e_q<\tau_0^{1}, X_{e_q}^{1} \in\mathrm{d}y)}{1- \mathds{E}e^{-q\tau_\epsilon} \mathds{E}_\epsilon e^{-q\tau_0}} \\
   &= \lim_{\epsilon \rightarrow 0^+} \frac{\mathds{E}e^{-q\tau_\epsilon} \mathds{P}_\epsilon(e_q<\tau_0^{1}, X_{e_q}^{1} \in\mathrm{d}y)}{1- \mathds{E}e^{-q\tau_\epsilon} \mathds{E}_\epsilon e^{-q\tau_0}} \\
   % &= \lim_{\epsilon \rightarrow 0^+}  \frac{\mathds{E}e^{-q\tau_\epsilon} \Big( \mathds{P}_\epsilon(X^{1}_{e_q} \in\mathrm{d}y)- \mathds{P}_\epsilon(e_q>\tau_0^{1}, X^{1}_{e_q} \in\mathrm{d}y) \Big)}{1- \mathds{E}e^{-q\tau_\epsilon} \mathds{E}_\epsilon e^{-q\tau_0}} \\
   &= \lim_{\epsilon \rightarrow 0^+}  \frac{\mathds{E}e^{-q\tau_\epsilon} \Big( \mathds{P}_\epsilon(X^{1}_{e_q} \in \mathrm{d}y)- \mathds{P}_\epsilon(e_q>\tau_0^{1}) \mathds{P}(X^{1}_{e_q} \in \mathrm{d}y) \Big) }{1- \mathds{E}e^{-q\tau_\epsilon} \mathds{E}_\epsilon e^{-q\tau_0}} \\
    &= \lim_{\epsilon \rightarrow 0^+}  \frac{\mathds{E}e^{-q\tau_\epsilon} \Big( \mathds{P}_\epsilon(X^{1}_{e_q} \in \mathrm{d}y)- \mathds{E}_\epsilon e^{-q \tau_0} \mathds{P}(X^{1}_{e_q} \in \mathrm{d}y) \Big) }{1- \mathds{E}e^{-q\tau_\epsilon} \mathds{E}_\epsilon e^{-q\tau_0}}\\
    &= \lim_{\epsilon \rightarrow 0^+} \frac{\mathds{P}_\epsilon(X^{1}_{e_q} \in \mathrm{d}y)-e^{\rho^+_1\epsilon} \mathds{P}(X^{1}_{e_q} \in \mathrm{d}y)}{(1-c_2(q)) e^{\rho^+_2\epsilon} + (c_2(q)-1) e^{\rho^+_1\epsilon}} \hspace{0.4cm} \textit{l'H\^{o}pital's rule} \\
    &=\lim_{\epsilon \rightarrow 0^+} \frac{\frac{d}{d \epsilon}\mathds{P}_\epsilon(X^{1}_{e_q} \in \mathrm{d}y) - \rho^+_1 e^{\rho^+_1 \epsilon} \mathds{P}(X^{1}_{e_q} \in \mathrm{d}y)}{(1-c_2(q))\rho^+_2 e^{\rho^+_2\epsilon} + (c_2(q)-1)\rho^+_1 e^{\rho^+_1\epsilon}} \\ 
    &=\lim_{\epsilon \rightarrow 0^+} \frac{\frac{q}{\delta^+} (-\mu_+ + \delta^+) e^{(\mu_+ - \delta^+)(y-\epsilon)}- \rho^+_1 e^{\rho^+_1 \epsilon} \frac{q}{\delta^+}e^{(\mu_+ - \delta^+)y}}{(1-c_2(q))\rho^+_2 e^{\rho^+_2\epsilon} + (c_2(q)-1)\rho^+_1 e^{\rho^+_1\epsilon}} \mathrm{d} y \\ 
   % &=\frac{\frac{q}{\delta^+} \rho^+_2 e^{(\mu_+ - \delta^+)y}- \rho^+_1  \frac{q}{\delta^+}e^{(\mu_+ - \delta^+)y}}{(1-c_2(q))\rho^+_2  + (c_2(q)-1)\rho^+_1 } \mathrm{d} y \\
    &=\frac{\frac{q}{\delta^+} (\rho^+_2 -\rho^+_1) e^{(\mu_+ - \delta^+)y}}{(1-c_2(q))(\rho^+_2 -\rho^+_1) } \mathrm{d} y \\
   &= \frac{q}{\delta^+(1-c_2(q))}  e^{(\mu_+ - \delta^+)y} \mathrm{d} y,
    \end{split}
\end{equation}
where $\mathds{P}(X_{e_q}^{1} \in \mathrm{d}y) = \frac{q}{\delta^+}e^{(\mu_+ - \delta^+)y} \mathrm{d}y$.
% So, by $\delta^+ = \sqrt{\mu^2_+ + 2q}$,
% \begin{equation} \label{laplace2drists}
%     \int_{0}^{\infty} q e^{-qt}\mathds{P}(X_t\in \mathrm{d}y) \mathrm{d}t = \frac{q}{ \sqrt{\mu^2_+ + 2q}(1-c_2(q))}  e^{(\mu_+ -  \sqrt{\mu^2_+ + 2q})y} \mathrm{d} y.
% \end{equation}
Substituting $\delta^+$ and $c_2(q)$,
\begin{equation} \label{laplace2drists-form2}
\begin{split}
     &\int_{0}^{\infty} q e^{-qt}\mathds{P}(X_t\in \mathrm{d}y) \mathrm{d}t = \\
     &\frac{2(1+\beta)q}{ (1+\beta) \mu_+ - (1-\beta) \mu_- + (1+\beta)\sqrt{\mu_+^2 + 2q} + (1-\beta) \sqrt{\mu_-^2+2q}}  e^{(\mu_+ -  \sqrt{\mu^2_+ + 2q})y} \mathrm{d} y.
\end{split}
\end{equation}

When $y<\epsilon <0$, for $\epsilon<0$. Let $X_t^{2} = \mu_-t + B_t$ be a Brownian motion and let $\tau_x^{2}$ be the hitting time of process $X_t^{2}$. For $y<\epsilon <0$, by the Markov property and the memoryless property,
\begin{equation}
    \begin{split}
        \mathds{P}(X_{e_q} \in \mathrm{d}y) &= \mathds{P}(\tau_\epsilon<e_q, X_{e_q} \in \mathrm{d}y) \\ 
    &= \mathds{P}(\tau_\epsilon<e_q)\mathds{P}_{\epsilon}(X_{e_q} \in \mathrm{d}y) \\
    &= \mathds{E}e^{-q\tau_\epsilon} \Big( \mathds{P}_\epsilon(\tau_0<e_q, X_{e_q} \in \mathrm{d}y) + \mathds{P}_\epsilon(e_q<\tau_0, X_{e_q} \in \mathrm{d}y) \Big) \\ 
    &= \mathds{E}e^{-q\tau_\epsilon} \Big( \mathds{E}_\epsilon e^{-q\tau_0} \mathds{P}(X_{e_q} \in \mathrm{d}y) + \mathds{P}_\epsilon(e_q<\tau_0^{2}, X_{e_q}^{2} \in \mathrm{d}y) \Big).
    \end{split}
\end{equation}
Solving the above equation we have
\begin{equation} \label{limit3_1}
    \begin{split}
   \int_{0}^{\infty} q e^{-qt}\mathds{P}(X_t\in \mathrm{d}y) \mathrm{d}t &= \mathds{P}(X_{e_q}\in \mathrm{d}y) \\
   % &= \frac{\mathds{E}e^{-q\tau_\epsilon} \mathds{P}_\epsilon(e_q<\tau_0^{2}, X_{e_q}^{2} \in \mathrm{d}y)}{1- \mathds{E}e^{-q\tau_\epsilon} \mathds{E}_\epsilon e^{-q\tau_0}} \\
   &= \lim_{\epsilon \rightarrow 0^-} \frac{\mathds{E}e^{-q\tau_\epsilon} \mathds{P}_\epsilon(e_q<\tau_0^{2}, X_{e_q}^{2} \in \mathrm{d}y)}{1- \mathds{E}e^{-q\tau_\epsilon} \mathds{E}_\epsilon e^{-q\tau_0}} \\
   % &= \lim_{\epsilon \rightarrow 0^-}  \frac{\mathds{E}e^{-q\tau_\epsilon} \Big( \mathds{P}_\epsilon(X^{2}_{e_q} \in \mathrm{d}y)- \mathds{P}_\epsilon(e_q>\tau_0^{2}, X^{2}_{e_q} \in \mathrm{d}y) \Big)}{1- \mathds{E}e^{-q\tau_\epsilon} \mathds{E}_\epsilon e^{-q\tau_0}} \\
   &= \lim_{\epsilon \rightarrow 0^-}  \frac{\mathds{E}e^{-q\tau_\epsilon} \Big( \mathds{P}_\epsilon(X^{2}_{e_q} \in \mathrm{d}y)- \mathds{P}_\epsilon(e_q>\tau_0^{2}) \mathds{P}(X^{2}_{e_q} \in \mathrm{d}y) \Big) }{1- \mathds{E}e^{-q\tau_\epsilon} \mathds{E}_\epsilon e^{-q\tau_0}} \\
    &= \lim_{\epsilon \rightarrow 0^-}  \frac{\mathds{E}e^{-q\tau_\epsilon} \Big( \mathds{P}_\epsilon(X^{2}_{e_q} \in \mathrm{d}y)- \mathds{E}_\epsilon e^{-q \tau_0} \mathds{P}(X^{2}_{e_q} \in \mathrm{d}y) \Big) }{1- \mathds{E}e^{-q\tau_\epsilon} \mathds{E}_\epsilon e^{-q\tau_0}}\\
    &= \lim_{\epsilon \rightarrow 0^-} \frac{\mathds{P}_\epsilon(X^{2}_{e_q} \in \mathrm{d}y)-e^{\rho^-_2\epsilon} \mathds{P}(X^{2}_{e_q} \in \mathrm{d}y)}{(c_1(q)-1) e^{\rho^-_2\epsilon} + (1-c_1(q)) e^{\rho^-_1\epsilon}} \hspace{0.4cm} \textit{l'H\^{o}pital's rule} \\
    &=\lim_{\epsilon \rightarrow 0^-} \frac{\frac{d}{d \epsilon}\mathds{P}_\epsilon(X^{2}_{e_q} \in \mathrm{d}y) - \rho^-_2 e^{\rho^-_2 \epsilon} \mathds{P}(X^{2}_{e_q} \in \mathrm{d}y)}{(c_1(q)-1) \rho^-_2e^{\rho^-_2\epsilon} + (1-c_1(q)) \rho^-_1e^{\rho^-_1\epsilon}} \\ 
    &=\lim_{\epsilon \rightarrow 0^-} \frac{\frac{q}{\delta^-} (-\mu_- - \delta^-) e^{(\mu_- + \delta^-)(y-\epsilon)}- \rho^-_2 e^{\rho^-_2 \epsilon} \frac{q}{\delta^-}e^{(\mu_- + \delta^-)y}}{(c_1(q)-1) \rho^-_2e^{\rho^-_2\epsilon} + (1-c_1(q)) \rho^-_1e^{\rho^-_1\epsilon}} \mathrm{d} y\\ 
    % &=\frac{\frac{q}{\delta^-} (-\mu_- - \delta^-) e^{(\mu_- + \delta^-)y}- \rho^-_2 \frac{q}{\delta^-}e^{(\mu_- + \delta^-)y}}{(c_1(q)-1) \rho^-_2 + (1-c_1(q)) \rho^-_1 } \mathrm{d} y\\
    &=\frac{-\frac{q}{\delta^-} (\rho^-_2 -\rho^-_1) e^{(\mu_- + \delta^-)y}}{-(1-c_1(q))(\rho^-_2 -\rho^-_1) } \mathrm{d} y \\
    &= \frac{q}{\delta^-(1-c_1(q))}  e^{(\mu_- + \delta^-)y} \mathrm{d} y,
    \end{split}
\end{equation}
where $\mathds{P}(X_{e_q}^{2} \in \mathrm{d}y) = \frac{q}{\delta^-}e^{(\mu_- + \delta^-)y} \mathrm{d}y$. Substituting $\delta^-$ and $c_1(q)$,
\begin{equation} \label{laplace2drists-form3}
\begin{split}
     &\int_{0}^{\infty} q e^{-qt}\mathds{P}(X_t\in \mathrm{d}y) \mathrm{d}t = \\
     &\frac{2(1-\beta)q}{ (1+\beta) \mu_+ - (1-\beta) \mu_- + (1+\beta)\sqrt{\mu_+^2 + 2q} + (1-\beta) \sqrt{\mu_-^2+2q}}  e^{(\mu_- +  \sqrt{\mu^2_- + 2q})y} \mathrm{d} y.
\end{split}
\end{equation} 
\end{proof}
%By utilizing probability rules and the characteristics of skew Brownian motion, 
We can also identify the potential measure for SBM with arbitrary initial value.
\begin{proposition} \label{prop:pmeasure_x}
    The potential measure of the refracted skew Brownian motion $X_{t}$ with skew level $a=0$ and starting point $x$ is given by:\\
    for $y>0$
     \begin{equation}
\mathds{P}_x(X_{e_q}\in \mathrm{d}y) =
    \begin{cases}
        \frac{q e^{\mu_+ (y-x)}}{\sqrt{\mu_+^2 + 2q}} \Big( e^{- |y-x|\sqrt{\mu_+^2 + 2q}} - e^{- (x+y)\sqrt{\mu_+^2 + 2q}}  \Big) \mathrm{d} y  &  \\
        +\frac{2(1+\beta)q }{(1+\beta) \mu_+ - (1-\beta) \mu_- + (1+\beta) \sqrt{\mu_+^2 + 2q} + (1-\beta)\sqrt{\mu_-^2 + 2q}} e^{\mu_+(y-x) - (x+y)\sqrt{\mu_+^2 + 2q}} \mathrm{d} y,  &  x \geq0, \\
         & \\
        \frac{2(1+\beta)q}{(1+\beta) \mu_+ - (1-\beta) \mu_- + (1+\beta) \sqrt{\mu_+^2 + 2q} + (1-\beta)\sqrt{\mu_-^2 + 2q}} e^{(-\mu_-  + \sqrt{\mu_-^2 + 2q}) x} e^{(\mu_+ - \sqrt{\mu_+^2 + 2q})y} \mathrm{d} y,  &  x \leq 0,
    \end{cases}
\end{equation}
and for $y<0$
\begin{equation}
\mathds{P}_x(X_{ e_{q}}\in \mathrm{d}y) =
    \begin{cases}
        \frac{2(1-\beta)q}{(1+\beta) \mu_+ - (1-\beta) \mu_- + (1+\beta) \sqrt{\mu_+^2 + 2q} + (1-\beta)\sqrt{\mu_-^2 + 2q}} e^{(-\mu_+  - \sqrt{\mu_+^2 + 2q}) x} e^{(\mu_- + \sqrt{\mu_-^2 + 2q})y} \mathrm{d} y,  &  x \geq0, \\
         & \\
       \frac{q e^{\mu_- (y-x)}}{\sqrt{\mu_-^2 + 2q}} \Big( e^{- |y-x|\sqrt{\mu_-^2 + 2q}} - e^{ (x+y)\sqrt{\mu_-^2 + 2q}}  \Big) \mathrm{d} y &  \\
    + \frac{2(1-\beta)q }{(1+\beta) \mu_+ - (1-\beta) \mu_- + (1+\beta) \sqrt{\mu_+^2 + 2q} + (1-\beta)\sqrt{\mu_-^2 + 2q}} e^{\mu_-(y-x) + (x+y)\sqrt{\mu_-^2 + 2q}} \mathrm{d} y.  &  x \leq 0.
    \end{cases}
\end{equation}
\end{proposition}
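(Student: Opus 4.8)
The plan is to condition on the first hitting time $\tau_0$ of the skew level $a=0$ and to exploit that, before $\tau_0$, the refracted skew Brownian motion started from $x$ coincides pathwise with a Brownian motion carrying only the relevant one-sided drift. Concretely, fix $x>0$; by continuity $X_s>0$ for every $s<\tau_0$, and on $\{X>0\}$ the local-time term in \eqref{sde0_2drifts} does not grow while the indicators collapse the SDE to $\mathrm{d}X_s=\mathrm{d}B_s+\mu_+\,\mathrm{d}s$. Hence $\tau_0$ and the law of the path up to $\tau_0$ agree with those of the drifted Brownian motion $X^{1}_t=\mu_+t+B_t$ and its hitting time $\tau^{1}_0$; the analogous statement holds for $x<0$ with $X^{2}_t=\mu_-t+B_t$.

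Using the memoryless property of $e_q$ and the strong Markov property at $\tau_0$, I would first write
\begin{equation*}
\mathds{P}_x(X_{e_q}\in\mathrm{d}y)=\mathds{P}_x(e_q<\tau_0,\,X_{e_q}\in\mathrm{d}y)+\mathds{E}_x\!\left[e^{-q\tau_0}\right]\mathds{P}_0(X_{e_q}\in\mathrm{d}y).
\end{equation*}
Here $\mathds{P}_0(X_{e_q}\in\mathrm{d}y)$ is exactly \eqref{eq:pmeasure_x0}, and $\mathds{E}_x[e^{-q\tau_0}]$ is delivered by Theorem~\ref{thm2_zhou} with $r=a=0$: for $x>0$ the denominator collapses to $1$, giving $\mathds{E}_x[e^{-q\tau_0}]=g_{1,q}(x)=e^{\rho^+_1 x}$, while for $x<0$ one gets $\mathds{E}_x[e^{-q\tau_0}]=e^{\rho^-_2 x}$. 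For the remaining term I would replace $X$ by the drifted Brownian motion that agrees with it up to $\tau_0$ and apply the very same decomposition to that free process, yielding $\mathds{P}_x(e_q<\tau_0,X_{e_q}\in\mathrm{d}y)=\mathds{P}_x(X^{1}_{e_q}\in\mathrm{d}y)-\mathds{E}_x[e^{-q\tau_0}]\,\mathds{P}_0(X^{1}_{e_q}\in\mathrm{d}y)$, which expresses the absorbed-at-$0$ potential through the elementary free potential $\mathds{P}_x(X^{1}_{e_q}\in\mathrm{d}y)=\tfrac{q}{\delta^+}e^{\mu_+(y-x)-\delta^+|y-x|}\,\mathrm{d}y$.

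The four sign regimes then separate cleanly. When $x$ and $y$ lie on the same side of $0$, both contributions survive: the free-potential difference produces the reflected-kernel term $\tfrac{q}{\delta^+}e^{\mu_+(y-x)}\big(e^{-\delta^+|y-x|}-e^{-\delta^+(x+y)}\big)$ (with the obvious $\mu_-$, $\delta^-$ analogue on the negative side), while $\mathds{E}_x[e^{-q\tau_0}]\,\mathds{P}_0(X_{e_q}\in\mathrm{d}y)$ supplies the $\beta$-dependent term. When $x$ and $y$ lie on opposite sides, any path reaching $y$ must cross $0$, so $\mathds{P}_x(e_q<\tau_0,X_{e_q}\in\mathrm{d}y)=0$ and only $\mathds{E}_x[e^{-q\tau_0}]\,\mathds{P}_0(X_{e_q}\in\mathrm{d}y)$ remains; substituting $e^{\rho^+_1x}$ or $e^{\rho^-_2 x}$ together with \eqref{eq:pmeasure_x0} gives the stated crossed exponentials $e^{(-\mu_-+\delta^-)x}e^{(\mu_+-\delta^+)y}$ and $e^{(-\mu_+-\delta^+)x}e^{(\mu_-+\delta^-)y}$. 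Collecting the four cases and simplifying $\delta^\pm$, $\rho^\pm_i$, $c_i(q)$ reproduces the proposition.

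I expect the main obstacle to be the same-side term rather than the crossed ones: one must argue carefully that up to $\tau_0$ the refracted skew process is genuinely indistinguishable in law from the one-sided drifted Brownian motion, so that neither the local time at $0$ nor the far-side drift contributes and $\mathds{P}_x(e_q<\tau_0,\cdot)$ transfers verbatim, and then pin down the absorbed potential of drifted Brownian motion in closed form. The cleanest route to the latter is the free-process decomposition above, which avoids a direct reflection argument and makes the perturbation structure transparent, namely \emph{free potential plus a boundary correction weighted by} $\mathds{E}_x[e^{-q\tau_0}]$; what is left is routine bookkeeping of signs and exponents across the regimes.
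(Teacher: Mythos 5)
Your proposal is correct and follows essentially the same route as the paper's own proof: both decompose $\mathds{P}_x(X_{e_q}\in\mathrm{d}y)$ at the event $\{e_q<\tau_0\}$ versus $\{e_q>\tau_0\}$, identify the pre-$\tau_0$ law with the one-sided drifted Brownian motion $X^{1}$ (resp.\ $X^{2}$), express the absorbed contribution as the free potential minus the correction $\mathds{E}_x[e^{-q\tau_0}]\,\mathds{P}_0(X^{1}_{e_q}\in\mathrm{d}y)$, and feed in $\mathds{E}_x[e^{-q\tau_0}]=e^{\rho^+_1x}$ or $e^{\rho^-_2x}$ from Theorem~\ref{thm2_zhou} together with the $x=0$ potential \eqref{eq:pmeasure_x0}, with the crossed cases $xy<0$ reducing to the single product term exactly as you describe. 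The only cosmetic difference is that the paper writes the boundary correction with $\tau^{1}_0$ rather than $\tau_0$, which is immaterial since the two hitting times coincide in law before level $0$ is reached.
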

\begin{proof}
CASE 1: $x \geq 0 $ and $y > 0$,
\begin{eqnarray*}
    \mathds{P}_x(X_{ e_{q}} \in \mathrm{d}y) &=& \mathds{P}_x(X_{ e_{q}} \in \mathrm{d}y,e_{q}<\tau_0) + \mathds{P}_x(X_{ e_{q}} \in \mathrm{d}y,e_{q}>\tau_0) \\
    &=& \mathds{P}_x(X_{ e_{q}}^{1} \in \mathrm{d}y,e_{q}<\tau_0^{1}) + \mathds{P}_x( e_{q}>\tau_0)\mathds{P}(X_{ e_{q}} \in \mathrm{d}y) \\
    &=& \mathds{P}_x(X_{e_{q}}^{1} \in \mathrm{d}y) - \mathds{P}_x(X_{ e_{q}}^{1} \in \mathrm{d}y,e_{q}>\tau_0^{1}) + \mathds{P}_x( e_{q}>\tau_0)\mathds{P}(X_{ e_{q}} \in \mathrm{d}y) \\
    &=& \mathds{P}_x(X_{ e_{q}}^{1} \in \mathrm{d}y) - \mathds{E}_xe^{-q \tau_0^{1}} \mathds{P}(X_{ e_{q}}^{1} \in \mathrm{d}y) + \mathds{E}_xe^{-q \tau_0} \mathds{P}(X_{ e_{q}} \in \mathrm{d}y) \\
    % &=& \frac{q}{\sqrt{\mu_+^2 + 2q}} e^{\mu_+ (y-x) - |y-x|\sqrt{\mu_+^2 + 2q}} \mathrm{d} y - e^{-(\mu_+ + \sqrt{\mu_+^2 + 2q})x} \frac{q}{\sqrt{\mu_+^2 + 2q}} e^{(\mu_+ - \sqrt{\mu_+^2 + 2q})y} \mathrm{d} y \\
    % &+& e^{-(\mu_+ + \sqrt{\mu_+^2 + 2q})x } \frac{q}{\sqrt{\mu_+^2 + 2q} (1-c_2(q))} e^{(\mu_+ - \sqrt{\mu_+^2 + 2q})y} \mathrm{d} y \\
    &=& \frac{q e^{\mu_+ (y-x)}}{\sqrt{\mu_+^2 + 2q}} \Big( e^{- |y-x|\sqrt{\mu_+^2 + 2q}} - e^{- (x+y)\sqrt{\mu_+^2 + 2q}}  \Big) \mathrm{d} y \\
    &+& \frac{2(1+\beta)q }{(1+\beta) \mu_+ - (1-\beta) \mu_- + (1+\beta) \sqrt{\mu_+^2 + 2q} + (1-\beta)\sqrt{\mu_-^2 + 2q}} e^{\mu_+(y-x) - (x+y)\sqrt{\mu_+^2 + 2q}} \mathrm{d} y.
\end{eqnarray*}

CASE 2: $x \leq 0 $ and $y > 0$,
\begin{eqnarray*}
    \mathds{P}_x(X_{ e_{q}} \in \mathrm{d}y) &=& \mathds{P}_x(X_{ e_{q}} \in \mathrm{d}y,e_{q}>\tau_0) \\
    &=& \mathds{P}_x( e_{q}> \tau_0) \mathds{P}(X_{ e_{q}} \in \mathrm{d}y)  \\
    &=& \mathds{E}_xe^{-q \tau_0} \mathds{P}(X_{ e_{q}} \in \mathrm{d}y) \\
    % &=& e^{\rho_2^- x } \frac{q}{\sqrt{\mu_+^2 + 2q} (1-c_2(q))} e^{(\mu_+ - \sqrt{\mu_+^2 + 2q})y} \mathrm{d} y  \\
    &=& \frac{2(1+\beta)q}{(1+\beta) \mu_+ - (1-\beta) \mu_- + (1+\beta) \sqrt{\mu_+^2 + 2q} + (1-\beta)\sqrt{\mu_-^2 + 2q}}\\
    &&\times e^{(-\mu_-  + \sqrt{\mu_-^2 + 2q}) x} e^{(\mu_+ - \sqrt{\mu_+^2 + 2q})y} \mathrm{d} y.
\end{eqnarray*}

CASE 3: $x \geq 0 $ and $y < 0$,
\begin{eqnarray*}
    \mathds{P}_x(X_{ e_{q}} \in \mathrm{d}y) &=& \mathds{P}_x(X_{ e_{q}} \in \mathrm{d}y,e_{q}>\tau_0) \\
    &=& \mathds{P}_x( e_{q}> \tau_0) \mathds{P}(X_{ e_{q}} \in \mathrm{d}y)  \\
    &=& \mathds{E}_xe^{-q \tau_0} \mathds{P}(X_{ e_{q}} \in \mathrm{d}y) \\
    % &=& e^{\rho_1^+ x } \frac{q}{\sqrt{\mu_-^2 + 2q} (1-c_1(q))}e^{(\mu_- + \delta^-)y} \mathrm{d} y \\
    &=& \frac{2(1-\beta)q}{(1+\beta) \mu_+ - (1-\beta) \mu_- + (1+\beta) \sqrt{\mu_+^2 + 2q} + (1-\beta)\sqrt{\mu_-^2 + 2q}}\\
    &&\times e^{(-\mu_+  - \sqrt{\mu_+^2 + 2q}) x} e^{(\mu_- + \sqrt{\mu_-^2 + 2q})y} \mathrm{d} y.
\end{eqnarray*}

CASE 4: $x \leq 0 $ and $y > 0$
\begin{eqnarray*}
    \mathds{P}_x(X_{ e_{q}} \in \mathrm{d}y) &=& \mathds{P}_x(X_{ e_{q}} \in \mathrm{d}y,e_{q}<\tau_0) + \mathds{P}_x(X_{ e_{q}} \in \mathrm{d}y,e_{q}>\tau_0) \\
    &=& \mathds{P}_x(X_{ e_{q}}^{2} \in \mathrm{d}y,e_{q}<\tau_0^{2}) + \mathds{P}_x( e_{q}>\tau_0)\mathds{P}(X_{ e_{q}} \in \mathrm{d}y) \\
    &=& \mathds{P}_x(X_{ e_{q}}^{2} \in \mathrm{d}y) - \mathds{P}_x(X_{ e_{q}}^{2} \in \mathrm{d}y,e_{q}>\tau_0^{2}) + \mathds{P}_x( e_{q}>\tau_0)\mathds{P}(X_{ e_{q}} \in \mathrm{d}y) \\
    &=& \mathds{P}_x(X_{ e_{q}}^{2} \in \mathrm{d}y) - \mathds{E}_xe^{-q \tau_0^{2}} \mathds{P}(X_{ e_{q}}^{2} \in \mathrm{d}y) + \mathds{E}_xe^{-q \tau_0} \mathds{P}(X_{ e_{q}} \in \mathrm{d}y) \\
    % &=& \frac{q}{\sqrt{\mu_-^2 + 2q}} e^{\mu_- (y-x) - |y-x|\sqrt{\mu_-^2 + 2q}} \mathrm{d} y - e^{-(\mu_-  - \sqrt{\mu_-^2 + 2q})x } \frac{q}{\sqrt{\mu_-^2 + 2q}} e^{(\mu_- + \sqrt{\mu_-^2 + 2q})y} \mathrm{d} y \\
    % &+& e^{-(\mu_- - \sqrt{\mu_-^2 + 2q})x } \frac{q}{\sqrt{\mu_-^2 + 2q} (1-c_1(q))} e^{(\mu_- + \sqrt{\mu_-^2 + 2q})y} \mathrm{d} y \\
    &=& \frac{q e^{\mu_- (y-x)}}{\sqrt{\mu_-^2 + 2q}} \Big( e^{- |y-x|\sqrt{\mu_-^2 + 2q}} - e^{ (x+y)\sqrt{\mu_-^2 + 2q}}  \Big) \mathrm{d} y \\
    &+& \frac{2(1-\beta)q }{(1+\beta) \mu_+ - (1-\beta) \mu_- + (1+\beta) \sqrt{\mu_+^2 + 2q} + (1-\beta)\sqrt{\mu_-^2 + 2q}} e^{\mu_-(y-x) + (x+y)\sqrt{\mu_-^2 + 2q}} \mathrm{d} y.
\end{eqnarray*}
\end{proof}

%{\color{blue} Add a remark to explain the typo in\cite{borodin2023distributions}.}

\begin{remark}
    Proposition \ref{prop:pmeasure_x} is obtained  in (1.6) of \cite{borodin2023distributions}. However, note that there is a typo in the first term of (1.6) when $z > 0$ and $x \geq 0$: all terms in the exponential functions should involve $\mu$.
\end{remark}

We further consider the potential measure with barriers imposed.
If $b_-<x<b_+$, then, starting from $x$ with skew level is zero, the probability that $X_{ e_{q}}$ remains between $b_-$ and $b_+$ is:

\begin{proposition}
    Given $b_-<0<b_+, X_0=x, a=0$, the potential measure of the refracted skew Brownian motion with barriers at $b_-$ and $b_+$ is given by
    \[\mathds{P}_x(X_{ e_{q}} \in\mathrm{d}y ,e_{q}<\tau_{b_-} \wedge \tau_{b_+})=\mathds{P}_x(X_{ e_{q}} \in\mathrm{d}y) -\frac{w(x,b_+)}{w(b_-,b_+)} \mathds{P}_{b_-}( X_{ e_{q}} \in\mathrm{d}y) -\frac{w(x,b_-)}{w(b_+,b_-)} \mathds{P}_{b_+}( X_{ e_{q}} \in\mathrm{d}y)\]
    and  remaining below or above level $b_-$ is:
    \[\mathds{P}_x(X_{ e_{q}} \in\mathrm{d}y ,e_{q}<\tau_{b_-} )= \mathds{P}_x(X_{ e_{q}} \in \mathrm{d}y ) - \mathds{E}_x e^{-q \tau_{b_-}} \mathds{P}_{b_-}(X_{ e_{q}} \in\mathrm{d}y).\]
\end{proposition}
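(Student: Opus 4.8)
The plan is to condition on the first exit from the interval $(b_-,b_+)$ and then exploit the memoryless property of the exponential clock together with the strong Markov property, exactly as in the proofs of the earlier results. Write $\tau := \tau_{b_-} \wedge \tau_{b_+}$ for the first exit time. The starting point is the total-probability decomposition
\[\mathds{P}_x(X_{e_q} \in \mathrm{d}y) = \mathds{P}_x(X_{e_q} \in \mathrm{d}y, e_q < \tau) + \mathds{P}_x(X_{e_q} \in \mathrm{d}y, e_q \geq \tau),\]
so that the desired restricted potential measure is recovered by subtracting the ``already exited'' contribution from the unrestricted measure supplied by Proposition \ref{prop:pmeasure_x}.

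Next I would treat the second term. Because the solution to (\ref{sde0_2drifts}) has continuous paths, on $\{e_q \geq \tau\}$ we have $X_\tau \in \{b_-,b_+\}$ almost surely, so that event splits cleanly into $\{\tau_{b_-} < \tau_{b_+}\}$ and $\{\tau_{b_+} < \tau_{b_-}\}$. On each piece I would apply the strong Markov property at $\tau$ together with the lack of memory of $e_q$: since $e_q$ is independent of $X$, on $\{e_q \geq \tau\}$ the residual time $e_q - \tau$ is again exponential with intensity $q$ and independent of $\mathcal{F}_\tau$, so the post-$\tau$ path is a fresh copy of $X$ started from $X_\tau$ and evaluated at an independent $\mathrm{Exp}(q)$ clock. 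This factorizes the term into a Laplace transform of the exit time (with the location of exit recorded) times a potential measure restarted from the corresponding barrier:
\[\mathds{P}_x(X_{e_q} \in \mathrm{d}y, e_q \geq \tau) = \mathds{E}_x[e^{-q\tau_{b_-}}; \tau_{b_-} < \tau_{b_+}]\, \mathds{P}_{b_-}(X_{e_q} \in \mathrm{d}y) + \mathds{E}_x[e^{-q\tau_{b_+}}; \tau_{b_+} < \tau_{b_-}]\, \mathds{P}_{b_+}(X_{e_q} \in \mathrm{d}y).\]

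Then I would substitute the two-sided exit transforms from Theorem \ref{thm1_zhou}, namely $\mathds{E}_x[e^{-q\tau_{b_-}}; \tau_{b_-} < \tau_{b_+}] = w(x,b_+)/w(b_-,b_+)$ and $\mathds{E}_x[e^{-q\tau_{b_+}}; \tau_{b_+} < \tau_{b_-}] = w(x,b_-)/w(b_+,b_-)$ (applying the theorem with $y = b_-$ and $z = b_+$), and rearrange the decomposition to obtain the first claimed identity. The single-barrier statement follows the same template with $\tau$ replaced by $\tau_{b_-}$: now the exit location is deterministic, so the decomposition collapses to $\mathds{P}_x(X_{e_q} \in \mathrm{d}y, e_q \geq \tau_{b_-}) = \mathds{E}_x[e^{-q\tau_{b_-}}]\, \mathds{P}_{b_-}(X_{e_q} \in \mathrm{d}y)$, with the unconditional hitting-time transform provided by Theorem \ref{thm2_zhou}, and rearranging yields the second identity.

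The main obstacle is the rigorous justification of the factorization in the second step: one must verify that, conditionally on $\{e_q \geq \tau\}$ and on $\mathcal{F}_\tau$, the shifted process $(X_{\tau+s})_{s \geq 0}$ is distributed as $X$ started from $X_\tau$ and run until an independent $\mathrm{Exp}(q)$ time. This is precisely where the interplay between the independence of $e_q$ and $X$, the memorylessness of the exponential law, and the strong Markov property of the pathwise unique solution to (\ref{sde0_2drifts}) must be invoked with care. Once this factorization is in place and the $w$-ratios are inserted, the remainder of the argument is routine algebra.
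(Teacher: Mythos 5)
Your proposal is correct and takes essentially the same approach as the paper: both decompose the unrestricted potential measure $\mathds{P}_x(X_{e_q}\in\mathrm{d}y)$ into the pre-exit part plus the contributions from hitting $b_-$ or $b_+$ before $e_q$, factorize those contributions via the strong Markov property and the memorylessness of the independent exponential clock, and then insert the exit-time Laplace transforms from Theorem \ref{thm1_zhou} (two-sided) and Theorem \ref{thm2_zhou} (one-sided). The only difference is presentational: you spell out the conditional-independence justification of the factorization at $\tau$, which the paper applies implicitly.
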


\begin{proof}
    
\begin{equation*}
    \begin{split}
        &\mathds{P}_x(X_{ e_{q}} \in\mathrm{d}y ,e_{q}<\tau_{b_-} \wedge \tau_{b_+})\\
        &= \mathds{P}_x(X_{ e_{q}} \in\mathrm{d}y) - \mathds{P}_x(\tau_{b_-}<e_{q} \wedge \tau_{b_+} , X_{ e_{q}} \in\mathrm{d}y)-\mathds{P}_x(\tau_{b_+}<e_{q} \wedge \tau_{b_-} , X_{ e_{q}} \in\mathrm{d}y) \\
        &= \mathds{P}_x(X_{ e_{q}} \in\mathrm{d}y) -\mathds{P}_x(\tau_{b_-}< e_{q} \wedge \tau_{b_+}) \mathds{P}_{b_-}( X_{ e_{q}} \in\mathrm{d}y) -\mathds{P}_x(\tau_{b_+}< e_{q} \wedge \tau_{b_-}) \mathds{P}_{b_+}( X_{ e_{q}} \in\mathrm{d}y)\\
        &=\mathds{P}_x(X_{ e_{q}} \in\mathrm{d}y) -\mathds{E}_x(e^{-q\tau_{b_-}}, \tau_{b_-} < \tau_{b_+}) \mathds{P}_{b_-}( X_{ e_{q}} \in\mathrm{d}y) -\mathds{E}_x(e^{-q\tau_{b_+}}, \tau_{b_+}<\tau_{b_-}) \mathds{P}_{b_+}( X_{ e_{q}} \in\mathrm{d}y) \\
        &= \mathds{P}_x(X_{ e_{q}} \in\mathrm{d}y) -\frac{w(x,b_+)}{w(b_-,b_+)} \mathds{P}_{b_-}( X_{ e_{q}} \in\mathrm{d}y) -\frac{w(x,b_-)}{w(b_+,b_-)} \mathds{P}_{b_+}( X_{ e_{q}} \in\mathrm{d}y).
    \end{split}
\end{equation*}

For the second part: 

\begin{equation*}
    \begin{split}
        \mathds{P}_x(X_{ e_{q}} \in\mathrm{d}y ,e_{q}<\tau_{b_-} ) &= \mathds{P}_x(X_{ e_{q}} \in \mathrm{d}y ) - \mathds{P}_x(X_{ e_{q}} \in\mathrm{d}y ,e_{q} > \tau_{b_-}) \\
        &= \mathds{P}_x(X_{ e_{q}} \in \mathrm{d}y ) - \mathds{P}_x(e_{q} > \tau_{b_-}) \mathds{P}_{b_-}(X_{ e_{q}} \in\mathrm{d}y) \\
        &= \mathds{P}_x(X_{ e_{q}} \in \mathrm{d}y ) - \mathds{E}_x e^{-q \tau_{b_-}} \mathds{P}_{b_-}(X_{ e_{q}} \in\mathrm{d}y).
    \end{split}
\end{equation*}    

\end{proof}
% \begin{equation*}
%     \begin{split}
%         \mathds{P}_x(X_{ e_{q}} \in\mathrm{d}y ,e_{q}< \tau_{b_+}) &= \mathds{P}_x(X_{ e_{q}} \in \mathrm{d}y ) - \mathds{P}_x(X_{ e_{q}} \in\mathrm{d}y ,e_{q} > \tau_{b_+}) \\
%         &= \mathds{P}_x(X_{ e_{q}} \in \mathrm{d}y ) - \mathds{P}_x(e_{q} > \tau_{b_+}) \mathds{P}_{b_+}(X_{ e_{q}} \in\mathrm{d}y) \\
%         &= \mathds{P}_x(X_{ e_{q}} \in \mathrm{d}y ) - \mathds{E}_x e^{-q \tau_{b_+}} \mathds{P}_{b_+}(X_{ e_{q}} \in\mathrm{d}y).
%     \end{split}
% \end{equation*}    

\section{Transition density of RSBM}\label{sec:Laplace_sbm}
In this section, we obtain the transition probability of an RSBM by inverting the Laplace transform.

We first define an auxiliary function 
\begin{equation} \label{func:h}
    h(t;x,\mu) := \frac{|x|}{\sqrt{2 \pi t^3}} e^{\frac{-(x+\mu t)^2}{2t}}, \hspace{1 cm} x,\mu \in \mathds{R}, t>0,
\end{equation}
where its Laplace transform for $x\neq 0$ is given by:
\begin{equation} \label{eq:lap_tr_h}
    \int_{0}^{\infty} e^{-qt} h(t;x,\mu) \mathrm{d}t = \exp\Big[-(\mu + \mathrm{sgn}(x) \sqrt{2q + \mu^2} ) x \Big]
\end{equation}
and zero for $x = 0$. Additionally, we have the following properties for $h$:
\[
h(t;x,\mu) = h(t;-x,-\mu), \hspace{1cm} h(t;x,-\mu)=h(t;-x,\mu) = e^{2\mu x} h(t;x,\mu)
\]
and 
\[
h(t;x_1+x_2, \mu) = \int_{0}^{t} h(t-\tau; x_1, \mu) h(\tau; x_2, \mu) \mathrm{d}\tau,
\]
see \cite{karlin2014first}. 

\begin{theorem} \label{thm:pdf_sbm}
    The density probability of the refracted skew Brownian motion, with skew level $a=0$ and starting point $x=0$ is given by:
    \begin{equation} \label{dist_final_1}
p(t;y) =
    \begin{cases}
        2 (1+ \beta)  e^{2 \mu_+ y} \int_{0}^{\infty} \int_{0}^{t} h(t-\tau; (1+\beta)b + y, \mu_+) h(\tau; (1-\beta)b,-\mu_-) \mathrm{d}\tau \mathrm{d}b,  &  y > 0,\\
         & \\
         2 (1- \beta)  e^{2 \mu_- y} \int_{0}^{\infty} \int_{0}^{t} h(t-\tau; (1+\beta)b , \mu_+) h(\tau; (1-\beta)b-y,-\mu_-) \mathrm{d}\tau \mathrm{d}b,  & y<0.
    \end{cases}
\end{equation}
\end{theorem}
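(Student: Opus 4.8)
The plan is to invert the Laplace transform of the potential measure from equation (\ref{laplace2drists-form2}) and (\ref{laplace2drists-form3}), recognizing that the term $\int_0^\infty q e^{-qt} \mathds{P}(X_t \in \mathrm{d}y)\,\mathrm{d}t$ is precisely $q$ times the Laplace transform of the transition density $p(t;y)$. Thus dividing by $q$ and inverting in the variable $q \mapsto t$ will recover $p(t;y)$. The key is to rewrite the $q$-dependent expressions in (\ref{laplace2drists-form2}) entirely in terms of quantities that match the known Laplace transform (\ref{eq:lap_tr_h}) of the auxiliary function $h$.

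First I would focus on the case $y>0$. The target is the factor
\[
\frac{2(1+\beta)}{(1+\beta)\mu_+ - (1-\beta)\mu_- + (1+\beta)\sqrt{\mu_+^2+2q} + (1-\beta)\sqrt{\mu_-^2+2q}}\; e^{(\mu_+ - \sqrt{\mu_+^2+2q})y}.
\]
The main algebraic trick I expect to need is to clear the denominator by writing it as a geometric-series-free integral: since $\frac{1}{A+B}$ with $A=(1+\beta)(\mu_+ + \sqrt{\mu_+^2+2q})$ and $B=(1-\beta)(-\mu_- + \sqrt{\mu_-^2+2q})$ does not directly factor, I would instead introduce the integral representation $\frac{1}{c} = \int_0^\infty e^{-cb}\,\mathrm{d}b$ for the reciprocal of the denominator (after suitable scaling), converting the rational expression into an integral over an auxiliary variable $b\ge 0$. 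Writing the denominator as $(1+\beta)(\mu_+ + \delta^+) + (1-\beta)(-\mu_- + \delta^-)$ and noting $\delta^\pm = \sqrt{\mu_\pm^2+2q}$, the exponent $-b\big[(1+\beta)(\mu_+ + \delta^+) + (1-\beta)(-\mu_- + \delta^-)\big]$ splits multiplicatively into a $\mu_+$-part and a $\mu_-$-part. Each part, when combined with the remaining $e^{(\mu_+ - \delta^+)y}$ factor, should match $\exp[-(\mu + \mathrm{sgn}(x)\sqrt{2q+\mu^2})x]$ from (\ref{eq:lap_tr_h}) for appropriate arguments $x = (1+\beta)b + y$ (with drift $\mu_+$) and $x=(1-\beta)b$ (with drift $-\mu_-$). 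This is exactly why the final answer carries the arguments $(1+\beta)b+y$ and $(1-\beta)b$.

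Having expressed the $q$-Laplace transform as a product of two $h$-Laplace transforms integrated against $\mathrm{d}b$, I would invoke the convolution property $\int_0^\infty e^{-qt} (h_1 * h_2)(t)\,\mathrm{d}t = \big(\int_0^\infty e^{-qt}h_1\big)\big(\int_0^\infty e^{-qt}h_2\big)$, so that the product of the two Laplace transforms inverts to the time-convolution $\int_0^t h(t-\tau;(1+\beta)b+y,\mu_+)\,h(\tau;(1-\beta)b,-\mu_-)\,\mathrm{d}\tau$. The prefactor $e^{2\mu_+ y}$ arises from matching the sign conventions: note the identity $h(t;x,-\mu)=e^{2\mu x}h(t;x,\mu)$ stated in the excerpt lets me swap drift signs at the cost of an exponential factor, and careful bookkeeping of whether the exponent reads $\mu_+ - \delta^+$ or $-\mu_+ - \delta^+$ forces this correction term. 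Dividing out the extra factor of $q$ and applying Fubini to interchange the $\mathrm{d}b$ integral with the Laplace inversion completes the $y>0$ case; the $y<0$ case follows by the symmetric computation starting from (\ref{laplace2drists-form3}), swapping the roles of $(\mu_+,\delta^+,1+\beta)$ and $(\mu_-,\delta^-,1-\beta)$.

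The hard part will be the exponent bookkeeping in the second step: correctly splitting the single denominator into the two separate $\delta^+$ and $\delta^-$ exponentials via the $\int_0^\infty e^{-cb}\,\mathrm{d}b$ representation, while simultaneously tracking the signs so that each factor lands exactly in the form of (\ref{eq:lap_tr_h}) and produces the stated arguments. A subtle point is justifying the interchange of the $b$-integral and the Laplace inversion (Fubini/Tonelli), which requires checking that the resulting double integral of $h\,h$ converges — this should follow from the Gaussian decay of $h$ in its first argument, but it must be verified to make the inversion rigorous rather than merely formal.
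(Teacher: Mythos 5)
Your proposal is correct and is essentially the paper's proof run in reverse: the paper starts from the claimed formula for $p(t;y)$, takes its Laplace transform in $t$ (turning the time-convolution of the two $h$'s into a product of exponentials via (\ref{eq:lap_tr_h}), then evaluating $\int_{0}^{\infty} e^{\rho_1^+(1+\beta)b}e^{-\rho_2^-(1-\beta)b}\,\mathrm{d}b$ to produce the reciprocal of the denominator) and matches the result against the potential measure (\ref{eq:pmeasure_x0}), whereas you run the identical two identities in the constructive direction, writing $1/c=\int_{0}^{\infty}e^{-cb}\,\mathrm{d}b$ and then inverting the product of Laplace transforms into a convolution. The ingredients, the sign bookkeeping that produces the prefactor $e^{2\mu_+ y}$, and the appeal to uniqueness of the Laplace transform are the same in both versions, and your Fubini concern is immediate by Tonelli since $h\geq 0$.
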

\begin{proof}
 For $y > 0$, taking the Laplace transform for $p(t;y)$ in (\ref{dist_final_1}) we have
\begin{eqnarray*} 
    & &\int_{0}^{\infty}  e^{-qt} \mathds{P}(X_t \in \mathrm{d} y) \mathrm{d}t = \int_{0}^{\infty}  e^{-qt} p(t;y) \mathrm{d}t \mathrm{d}y \\ 
    % &=&  2 (1+ \beta)  e^{2 \mu_+ y} \int_{0}^{\infty}  e^{-qt}  \int_{0}^{\infty} \int_{0}^{t} h(t-\tau; (1+\beta)b + y, \mu_+) h(\tau; (1-\beta)b,-\mu_-) \mathrm{d}\tau \mathrm{d}b \mathrm{d}t \mathrm{d}y \\
    &=& 2 (1+ \beta)  e^{2 \mu_+ y} \int_{0}^{\infty}  \int_{0}^{\infty} e^{-qt} \int_{0}^{t} h(t-\tau; (1+\beta)b + y, \mu_+) h(\tau; (1-\beta)b,-\mu_-) \mathrm{d}\tau \mathrm{d}t \mathrm{d}b \mathrm{d}y \\
    &=& 2 (1+ \beta)  e^{2 \mu_+ y} \int_{0}^{\infty}  \Big[ \int_{0}^{\infty} e^{-qt}  h(t; (1+\beta)b + y, \mu_+) \mathrm{d}t \Big] \Big[ \int_{0}^{\infty} e^{-qt} h(t; (1-\beta)b,-\mu_-) \mathrm{d}t \Big] \mathrm{d}b \mathrm{d}y\\
    &=& 2 (1+ \beta)  e^{2 \mu_+ y} \int_{0}^{\infty}  \Big[ \exp\Big( -(\mu_+ + \sqrt{2q + \mu_+^2})((1+\beta)b + y) \Big) \Big] \Big[ \exp \Big( - (-\mu_- + \sqrt{2q+\mu_-^2}) ((1-\beta) b) \Big) \Big] \mathrm{d}b \mathrm{d}y \\
    &=& 2 (1+ \beta)  e^{2 \mu_+ y} \int_{0}^{\infty} e^{\rho_1^+((1+\beta)b+y)} e^{-\rho_2^-(1-\beta)b} \mathrm{d}b \mathrm{d}y \\
    &=& 2 (1+ \beta)  e^{2 \mu_+ y}  e^{\rho_1^+ y} \int_{0}^{\infty} e^{\rho_1^+(1+\beta)b} e^{-\rho_2^-(1-\beta)b} \mathrm{d}b \mathrm{d}y \\
    &=& 2 (1+ \beta)  e^{-\rho_2^+ y} \int_{0}^{\infty} e^{-( -(1+\beta) \rho_1^+ + (1-\beta) \rho_2^- ) b} \mathrm{d}b \mathrm{d}y \\
    &=& \frac{2 (1+ \beta)}{-(1+\beta) \rho_1^+ + (1-\beta) \rho_2^-} e^{-\rho_2^+ y} \mathrm{d}y \\
    &=& \frac{2 (1+ \beta)}{(1+\beta) [\mu_+ + \sqrt{2q + \mu_+^2}] + (1-\beta) [-\mu_- + \sqrt{2q +\mu_-^2}]} e^{(\mu_+ - \sqrt{2q+\mu_+^2}) y} \mathrm{d}y,
\end{eqnarray*}
which recovers (\ref{eq:pmeasure_x0}).

Similarly, for $y<0$,
\begin{eqnarray*} 
    & &\int_{0}^{\infty}  e^{-qt} \mathds{P}(X_t \in \mathrm{d} y) \mathrm{d}t  = \int_{0}^{\infty}  e^{-qt} p(t;y) \mathrm{d}t \mathrm{d}y \\
    % &=&  2 (1- \beta)  e^{2 \mu_- y} \int_{0}^{\infty}  e^{-qt}  \int_{0}^{\infty} \int_{0}^{t} h(t-\tau; (1+\beta)b , \mu_+) h(\tau; (1-\beta)b-y,-\mu_-) \mathrm{d}\tau \mathrm{d}b \mathrm{d}t \mathrm{d}y\\
    &=& 2 (1- \beta)  e^{2 \mu_- y} \int_{0}^{\infty}  \int_{0}^{\infty} e^{-qt} \int_{0}^{t} h(t-\tau; (1+\beta)b , \mu_+) h(\tau; (1-\beta)b-y,-\mu_-) \mathrm{d}\tau \mathrm{d}t \mathrm{d}b \mathrm{d}y\\
    &=& 2 (1- \beta)  e^{2 \mu_- y} \int_{0}^{\infty}  \Big[ \int_{0}^{\infty} e^{-qt}  h(t; (1+\beta)b , \mu_+) \mathrm{d}t \Big] \Big[ \int_{0}^{\infty} e^{-qt} h(t; (1-\beta)b-y,-\mu_-) \mathrm{d}t \Big] \mathrm{d}b \mathrm{d}y \\
    &=& 2 (1- \beta)  e^{2 \mu_- y} \int_{0}^{\infty}  \Big[ \exp\Big( -(\mu_+ + \sqrt{2q + \mu_+^2})(1+\beta)b \Big) \Big] \Big[ \exp \Big( - (-\mu_- + \sqrt{2q+\mu_-^2)} ((1-\beta) b-y) \Big) \Big] \mathrm{d}b \mathrm{d}y\\
    &=& 2 (1- \beta)  e^{2 \mu_- y} \int_{0}^{\infty} e^{\rho_1^+(1+\beta)b} e^{-\rho_2^-((1-\beta)-y)b} \mathrm{d}b \mathrm{d}y\\
    &=& 2 (1- \beta)  e^{2 \mu_- y}  e^{\rho_2^- y} \int_{0}^{\infty} e^{\rho_1^+(1+\beta)b} e^{-\rho_2^-(1-\beta)b} \mathrm{d}b \mathrm{d}y\\
    &=& 2 (1- \beta)  e^{-\rho_1^- y} \int_{0}^{\infty} e^{-( -(1+\beta) \rho_1^+ + (1-\beta) \rho_2^- ) b} \mathrm{d}b \mathrm{d}y\\
    &=& \frac{2 (1- \beta)}{-(1+\beta) \rho_1^+ + (1-\beta) \rho_2^-} e^{-\rho_1^- y} \mathrm{d}y\\
    &=& \frac{2 (1- \beta)}{(1+\beta) [\mu_+ + \sqrt{2q + \mu_+^2}] + (1-\beta) [-\mu_- + \sqrt{2q +\mu_-^2}]} e^{(\mu_- +  \sqrt{2q+\mu_-^2}) y} \mathrm{d}y,
\end{eqnarray*}
which recovers (\ref{eq:pmeasure_x0}).   
\end{proof}

\begin{remark}
    Our result aligns with the marginal density of $x$ in the joint density provided in equation (2.4) in \cite{gairat2017density}. But it does not agree with equation (2.3) presented in \cite{gairat2017density}, possibly due to a typo in (2.3) for the case  $x>0$. Although the same authors of \cite{gairat2022skew} obtained the density function for the refracted skew Brownian motion containing alternating drifts using equation (2.3), their two results appear to be inconsistent, particularly in more general settings, see Appendix (\ref{app:girat}), for more details.
\end{remark}

The following corollaries concern special cases of Theorem \ref{thm:pdf_sbm}, which appear in the previous work on SBM. 

\begin{corollary} \label{corr:sbm_one_drift}
 The transition probability of a skew Brownian motion with one constant drift, $\mu_- = \mu_+ = \mu$, with skew level $a=0$ and starting point $x=0$ is given by:
    \begin{equation} 
p(t;y) =
    \begin{cases}
        \frac{ (1+ \beta)}{\sqrt{2 \pi t}}   e^{\frac{-(y-\mu t)^2}{2t}}  \Big[1 - \frac{\beta \mu}{2} \sqrt{2\pi t} e^{\frac{(y+\beta \mu t)^2}{2t}} 
 \mathrm{erfc}(\frac{y+\beta \mu t}{\sqrt{2t}}) \Big] \mathrm{d}y,  &  y > 0,\\
         & \\
         \frac{ (1- \beta)}{\sqrt{2 \pi t}}   e^{\frac{-(y-\mu t)^2}{2t}}  \Big[1 - \frac{\beta \mu}{2} \sqrt{2\pi t} e^{\frac{(-y+\beta \mu t)^2}{2t}} 
 \mathrm{erfc}(\frac{-y+\beta \mu t}{\sqrt{2t}}) \Big] \mathrm{d}y,  & y<0,
    \end{cases}
\end{equation}
where $\mathrm{erfc}(z)$ is a complementary error function given by 
\[
\mathrm{erfc}(z) = \frac{2}{\sqrt{\pi}} \int_{z}^{\infty} e^{-t^2} dt.
\]
\end{corollary}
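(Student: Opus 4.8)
The plan is to obtain Corollary~\ref{corr:sbm_one_drift} directly from Theorem~\ref{thm:pdf_sbm} by setting $\mu_+=\mu_-=\mu$ and then evaluating the remaining double integral in closed form using only the three stated properties of the auxiliary function $h$. I will work out the $y>0$ branch in detail and treat $y<0$ by the same sequence of steps. The strategy is to first collapse the inner $\tau$-integral to a single $h$ via the convolution identity, and then reduce the outer $b$-integral to the two elementary Gaussian integrals that produce, respectively, the leading Gaussian factor and the $\mathrm{erfc}$ correction term.

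First I would normalize the two $h$-factors to a common drift. In the $y>0$ expression the second factor carries drift $-\mu$, so I apply $h(\tau;x,-\mu)=e^{2\mu x}h(\tau;x,\mu)$ with $x=(1-\beta)b$, pulling out the $\tau$-independent factor $e^{2\mu(1-\beta)b}$. With both factors now sharing the drift $\mu$, I invoke the convolution property
\[
h(t;x_1+x_2,\mu)=\int_0^t h(t-\tau;x_1,\mu)\,h(\tau;x_2,\mu)\,\mathrm{d}\tau,
\]
and since $(1+\beta)b+(1-\beta)b=2b$ the inner integral collapses to $h(t;2b+y,\mu)$. This reduces the density to the single integral
\[
p(t;y)=2(1+\beta)e^{2\mu y}\int_0^\infty e^{2\mu(1-\beta)b}\,h(t;2b+y,\mu)\,\mathrm{d}b.
\]

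Next I would substitute $u=2b+y$ (so that $|2b+y|=u$ because $u\ge y>0$), insert the explicit Gaussian form of $h$, and complete the square in $u$. The linear-plus-quadratic exponent reorganizes as $-(u+\mu\beta t)^2/(2t)$ up to a $u$-free factor $e^{-\mu^2 t(1-\beta^2)/2}$. Writing $u=(u+\mu\beta t)-\mu\beta t$ then splits the integrand into two pieces: $\int v\,e^{-v^2/(2t)}\,\mathrm{d}v$, which integrates to a pure exponential, and $-\mu\beta t\int e^{-v^2/(2t)}\,\mathrm{d}v$, which yields $\tfrac{\sqrt{2\pi t}}{2}\,\mathrm{erfc}\big((y+\mu\beta t)/\sqrt{2t}\big)$. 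Recombining and cancelling the accumulated prefactors should turn the first piece into the leading term $\tfrac{1+\beta}{\sqrt{2\pi t}}e^{-(y-\mu t)^2/(2t)}$ and the second into its bracketed correction. The $y<0$ branch is handled identically, the only changes being the substitution $u=2b-y$ and the appearance of $-y$ in place of $y$ throughout.

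The main obstacle I anticipate is the final bookkeeping of exponential factors rather than any conceptual difficulty. After completing the square one is left with separate exponentials in $\mu(1+\beta)y$, $\mu^2 t(1-\beta^2)$, and $(y+\mu\beta t)^2/(2t)$, and the crux is to verify that the difference of squares $(y+\beta\mu t)^2-(y-\mu t)^2=\mu t(1+\beta)\big(2y-\mu t(1-\beta)\big)$ reproduces exactly the accumulated linear and quadratic exponents, so that the leading Gaussian $e^{-(y-\mu t)^2/(2t)}$ and the factor $e^{(y+\beta\mu t)^2/(2t)}$ inside the bracket emerge cleanly. This is routine but error-prone algebra; the structural steps of drift-flipping, convolving, and square completion are immediate given the properties of $h$.
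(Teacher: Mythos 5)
Your proposal is correct and follows essentially the same route as the paper's proof in Appendix A: flip the drift of the second factor via $h(\tau;x,-\mu)=e^{2\mu x}h(\tau;x,\mu)$, collapse the inner $\tau$-integral with the convolution identity to get $2(1+\beta)e^{2\mu y}\int_0^\infty e^{2\mu(1-\beta)b}h(t;2b+y,\mu)\,\mathrm{d}b$, then complete the square and split the Gaussian integral into a pure-exponential piece and an $\mathrm{erfc}$ piece. The only cosmetic difference is bookkeeping order (you substitute $u=2b+y$ before splitting off $\mu\beta t$, the paper splits first and substitutes $v=2b+y$ after), and your difference-of-squares check $(y+\beta\mu t)^2-(y-\mu t)^2=(1+\beta)\mu t\bigl(2y-(1-\beta)\mu t\bigr)$ is exactly the identity that makes the paper's final cancellation work.
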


The proof is deferred to the appendix \ref{app:proof_one_drift}.
Note that it recovers the previous result in \cite{appuhamillage2011occupation}. For $\mu = 0$, it corresponds to  a result in \cite{walsh1978diffusion}.

\begin{corollary} \label{corr:sbm_two_alternating_drift}
 The transition probability of a skew Brownian motion with two alternating drifts, $-\mu_- = \mu_+ = \mu$, with skew level $a=0$ and starting point $x=0$ is given by:
    \begin{equation} 
p(t;y) =
    \begin{cases}
        \frac{ (1+ \beta)}{\sqrt{2 \pi t}}  e^{\frac{-(y-\mu t)^2}{2t}} \Big[1 - \frac{ \mu}{2} \sqrt{2\pi t} e^{\frac{(y+ \mu t)^2}{2t}} 
  \mathrm{erfc}(\frac{y+ \mu t}{\sqrt{2t}}) \Big] \mathrm{d}y ,  &  y > 0,\\
         & \\
         \frac{ (1- \beta)}{\sqrt{2 \pi t}}  e^{\frac{-(y+\mu t)^2}{2t}} \Big[1 - \frac{ \mu}{2} \sqrt{2\pi t} e^{\frac{(-y+ \mu t)^2}{2t}} 
  \mathrm{erfc}(\frac{-y+ \mu t}{\sqrt{2t}}) \Big] \mathrm{d}y,  & y<0.
    \end{cases}
\end{equation}
\end{corollary}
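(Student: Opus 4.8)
The plan is to specialize Theorem \ref{thm:pdf_sbm} to the alternating-drift regime $\mu_+ = \mu$, $\mu_- = -\mu$, exploiting a simplification that is unavailable in general. The crucial observation is that in \eqref{dist_final_1} the two factors of $h$ carry drifts $\mu_+$ and $-\mu_-$, and under $-\mu_- = \mu_+ = \mu$ \emph{both} of these collapse to the common value $\mu$. This is exactly the hypothesis needed to apply the convolution identity
\[
h(t; x_1 + x_2, \mu) = \int_{0}^{t} h(t-\tau; x_1, \mu)\, h(\tau; x_2, \mu)\, \mathrm{d}\tau
\]
recorded after \eqref{eq:lap_tr_h} to the inner $\tau$-integral. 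I would emphasize that this is precisely what fails for the constant-drift Corollary \ref{corr:sbm_one_drift}, where the drifts are $+\mu$ and $-\mu$ and do not align; that misalignment is the reason its proof is longer and deferred to the appendix, whereas here the integral telescopes.

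First I would treat $y > 0$. Substituting $\mu_+ = \mu$ and $-\mu_- = \mu$ into the first branch of \eqref{dist_final_1} and applying the convolution identity with $x_1 = (1+\beta)b + y$, $x_2 = (1-\beta)b$ collapses the inner integral to $h(t; 2b + y, \mu)$, leaving
\[
p(t;y) = 2(1+\beta) e^{2\mu y} \int_{0}^{\infty} h(t; 2b + y, \mu)\, \mathrm{d}b.
\]
The substitution $u = 2b + y$ then rewrites this as $(1+\beta) e^{2\mu y} \int_{y}^{\infty} h(t; u, \mu)\, \mathrm{d}u$, a single tail integral of the explicit kernel \eqref{func:h}; since $u \ge y > 0$ we may drop the absolute value.

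The remaining task is the elementary evaluation of $\int_{y}^{\infty} h(t;u,\mu)\,\mathrm{d}u$. I would substitute $v = u + \mu t$ and split the numerator $v - \mu t$: the $v$-piece has antiderivative $-t\,e^{-v^2/(2t)}$ and contributes the Gaussian term $(2\pi t)^{-1/2} e^{-(y+\mu t)^2/(2t)}$, while the $\mu t$-piece yields $\tfrac{\mu}{2}\,\mathrm{erfc}\!\big((y+\mu t)/\sqrt{2t}\big)$. The final step is exponential bookkeeping: the prefactor is absorbed using $2\mu y - (y+\mu t)^2/(2t) = -(y-\mu t)^2/(2t)$, which converts the Gaussian term into $(2\pi t)^{-1/2}e^{-(y-\mu t)^2/(2t)}$, and the identity $e^{2\mu y} = e^{-(y-\mu t)^2/(2t)}\,e^{(y+\mu t)^2/(2t)}$, which lets me factor that same Gaussian out of the $\mathrm{erfc}$ term and produce the bracketed normalization $\big[1 - \tfrac{\mu}{2}\sqrt{2\pi t}\,e^{(y+\mu t)^2/(2t)}\,\mathrm{erfc}(\cdots)\big]$ of the statement. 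The case $y < 0$ is entirely symmetric: the same collapse applies to the second branch of \eqref{dist_final_1}, the substitution $u = 2b - y$ (with $-y>0$) reduces to $\int_{-y}^{\infty} h(t;u,\mu)\,\mathrm{d}u$, and the reflection $y \mapsto -y$ in the error-function argument delivers the lower branch.

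I expect no genuine obstacle once the drift alignment is recognized; the only point requiring care is the exponential bookkeeping in the last step, where the two applications of the completing-the-square identities must be carried out consistently to land on the precise form displayed in the statement.
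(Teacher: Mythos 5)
Your proof is correct and takes essentially the paper's intended route: the paper omits this proof as ``similar to'' Corollary \ref{corr:sbm_one_drift}, whose appendix proof uses exactly the same ingredients you use --- specializing Theorem \ref{thm:pdf_sbm}, the convolution identity for $h$, and the Gaussian/$\mathrm{erfc}$ tail evaluation. Your observation that under $-\mu_-=\mu_+=\mu$ the two drifts already align, so that the flip $h(t;x,-\mu)=e^{2\mu x}h(t;x,\mu)$ and the completing-the-square step needed in Corollary \ref{corr:sbm_one_drift} can be skipped, is precisely the simplification the omitted proof would exploit.
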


Which agrees with \cite{gairat2022skew}. The proof is similar to the previous corollary, so we omit it.

\begin{corollary} \label{cor:laplace_convolution}
Define $f(t;y)$ as the following: 
    \begin{equation*}
    f(t;y) := \begin{cases} 
    \int_0^{\infty} \int_0^t h(t- \tau; (1+\beta)b + y , \mu_+) h(\tau; (1-\beta) b , -\mu_-) \mathrm{d} \tau \mathrm{d} b, & y>0,\\
    \int_0^{\infty} \int_0^t h(t- \tau; (1+\beta)b , \mu_+) h(\tau; (1-\beta) b - y , -\mu_-) \mathrm{d} \tau \mathrm{d} b, & y<0,\\
    \end{cases}
    \end{equation*}
the Laplace transform of $f(t;y)$ is given by:
\begin{equation*}
    \hat{f}(q;y) = \begin{cases} 
    \frac{1}{(1+\beta) [\mu_+ + \sqrt{2q + \mu_+^2}] + (1-\beta) [-\mu_- + \sqrt{2q +\mu_-^2}]}e^{(-\mu_+ - \sqrt{2q + \mu_+^2})y}, & y>0,\\
    \frac{1}{(1+\beta) [\mu_+ + \sqrt{2q + \mu_+^2}] + (1-\beta) [-\mu_- + \sqrt{2q +\mu_-^2}]} e^{(-\mu_- + \sqrt{2q + \mu_-^2})y}, & y<0.\\
    \end{cases} 
\end{equation*}
\end{corollary}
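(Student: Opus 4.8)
The plan is to compute $\hat f(q;y)=\int_0^\infty e^{-qt}f(t;y)\,\mathrm{d}t$ by exploiting the fact that the inner $\tau$-integral defining $f$ is a Laplace convolution in the time variable. Fixing $y>0$ first, I would interchange the $t$-integration coming from the Laplace transform with the outer $b$-integration; this is justified by Tonelli's theorem, since $h\ge 0$ and hence all integrands are nonnegative. After the swap, the inner object is $\int_0^\infty e^{-qt}\int_0^t h(t-\tau;(1+\beta)b+y,\mu_+)\,h(\tau;(1-\beta)b,-\mu_-)\,\mathrm{d}\tau\,\mathrm{d}t$, which by the convolution theorem for Laplace transforms factorizes as the product of the two individual transforms $\int_0^\infty e^{-qt}h(t;(1+\beta)b+y,\mu_+)\,\mathrm{d}t$ and $\int_0^\infty e^{-qt}h(t;(1-\beta)b,-\mu_-)\,\mathrm{d}t$. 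Note that I would not use the convolution identity for $h$ stated after (\ref{func:h}), since the two drifts $\mu_+$ and $-\mu_-$ differ; only the general Laplace convolution theorem is required.

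Next I would apply the explicit transform (\ref{eq:lap_tr_h}) to each factor, tracking carefully the sign of each spatial argument. For $b\ge 0$ and $y>0$, both arguments $(1+\beta)b+y$ and $(1-\beta)b$ are strictly positive (recall $\beta\in(-1,1)$), so $\mathrm{sgn}=+1$ in both cases and the transforms become $e^{\rho_1^+((1+\beta)b+y)}$ and $e^{-\rho_2^-(1-\beta)b}$, upon recognizing $-(\mu_++\sqrt{2q+\mu_+^2})=\rho_1^+$ and $-(-\mu_-+\sqrt{2q+\mu_-^2})=-\rho_2^-$. Pulling out the $y$-dependent factor $e^{\rho_1^+ y}$ leaves a pure exponential in $b$, and the remaining integral $\int_0^\infty e^{[(1+\beta)\rho_1^+-(1-\beta)\rho_2^-]b}\,\mathrm{d}b$ converges because $\rho_1^+<0<\rho_2^-$ forces the exponent negative; it evaluates to $1/\big(-(1+\beta)\rho_1^++(1-\beta)\rho_2^-\big)$. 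Substituting $\rho_1^+=-(\mu_++\sqrt{2q+\mu_+^2})$ and $\rho_2^-=-\mu_-+\sqrt{2q+\mu_-^2}$ then yields exactly the claimed expression for $y>0$.

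For $y<0$ I would run the identical argument with the roles adjusted: the two arguments are now $(1+\beta)b$ and $(1-\beta)b-y$, both positive since $-y>0$, so again $\mathrm{sgn}=+1$ throughout; the factor $e^{\rho_2^- y}$ is pulled out (arising from the $-y$ shift in the second argument), and the same $b$-integral reproduces the common denominator, giving the stated formula with $e^{(-\mu_-+\sqrt{2q+\mu_-^2})y}$.

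The computation is routine; indeed it is precisely the core of the proof of Theorem \ref{thm:pdf_sbm}, stripped of the prefactors $2(1+\beta)e^{2\mu_+y}$ (for $y>0$) and $2(1-\beta)e^{2\mu_-y}$ (for $y<0$). The only points requiring care are the sign bookkeeping in applying (\ref{eq:lap_tr_h}) — one must verify each spatial argument is positive so that the correct branch of the transform is selected — and the convergence of the $b$-integral, which rests on the sign pattern $\rho_1^+<0<\rho_2^-$. Equivalently, one could observe that $f(t;y)=p(t;y)/\big(2(1+\beta)e^{2\mu_+y}\big)$ for $y>0$ (and the analogous relation for $y<0$) and read off $\hat f$ from the transform of $p$ already established in Theorem \ref{thm:pdf_sbm}; I would note this as a shortcut but present the direct evaluation for completeness.
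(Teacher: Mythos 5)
Your proposal is correct and follows essentially the same route as the paper: the paper proves this corollary implicitly inside the proof of Theorem \ref{thm:pdf_sbm}, where the Laplace transform in $t$ is factored by the convolution theorem, each factor is evaluated via (\ref{eq:lap_tr_h}) with positive spatial arguments, and the resulting exponential in $b$ is integrated using $\rho_1^+<0<\rho_2^-$, exactly as you do. Your added remarks (Tonelli for the interchange, the inapplicability of the same-drift convolution identity for $h$, and the observation that $\hat f$ is the transform of $p$ stripped of the prefactors $2(1\pm\beta)e^{2\mu_\pm y}$) are all accurate refinements of the paper's computation rather than a different method.
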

For simplicity we set $f^-(t;y)$ and $\hat{f}^-(q;y)$ to be the functions defined in Corollary \ref{cor:laplace_convolution} for $y<0$, and set $f^+(t;y)$, $\hat{f}^+(q;y)$, when $y>0$.

\begin{proposition} \label{prop:cdf}
     The cumulative distribution function (CDF) of the refracted skew Brownian motion with skew level $a=0$ and initial $x=0$ is given by:     for $z<0$:
     \begin{equation*}
\begin{split}
    \mathds{P}(Z \leq z) &= 2  (1- \beta)  \int_{0}^{\infty} e^{2 \mu_-(1-\beta) b} \Big[ \int_{0}^{t} \frac{1}{\sqrt{2 \pi \tau}} h(t-\tau; (1+\beta)b , \mu_+)  \exp\left( \frac{-((1-\beta)b - z + \mu_- \tau)^2}{2 \tau} \right) \mathrm{d}\tau \\ 
    &- \frac{\mu_-}{2} \int_{0}^{t}   h(t-\tau; (1+\beta)b , \mu_+) \mathrm{erfc} \left( \frac{(1-\beta)b - z + \mu_- \tau}{\sqrt{2 \tau}} \right)   \mathrm{d}\tau \Big] \mathrm{d}b,
    \end{split}
\end{equation*}
and for $z>0$:
\begin{equation*}
    \begin{split}
        \mathds{P}(Z \leq z) &= 1- 2 (1+ \beta) \int_{0}^{\infty}  e^{-2 \mu_+ (1+\beta) b } \Big[ \int_{0}^{t} \frac{1}{\sqrt{2 \pi \tau}}h(t-\tau; (1-\beta)b,-\mu_-)  \exp \left(\frac{-((1+\beta)b + z - \mu_+ \tau)^2}{2 \tau} \right)   \mathrm{d}\tau\\
        &+ \frac{\mu_+}{2} \int_{0}^{t} h(t-\tau; (1-\beta)b,-\mu_-)  \mathrm{erfc} \left( \frac{(1+\beta)b + z - \mu_+ \tau}{\sqrt{2 \tau}} \right)  \mathrm{d}\tau \Big] \mathrm{d} b. 
    \end{split}
\end{equation*}
\end{proposition}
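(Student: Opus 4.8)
The plan is to obtain the distribution function by integrating the transition density $p(t;y)$ from Theorem \ref{thm:pdf_sbm} over the spatial variable, handling the two signs of $z$ separately. For $z<0$ I would write $\mathds{P}(Z\le z)=\int_{-\infty}^{z}p(t;y)\,\mathrm{d}y$ and insert the $y<0$ branch of the density. Since every factor in the integrand is nonnegative, Tonelli's theorem justifies interchanging the $y$-, $\tau$- and $b$-integrations at will; I would push the $y$-integration innermost so that the only $y$-dependent factors are $e^{2\mu_- y}$ and $h(\tau;(1-\beta)b-y,-\mu_-)$, leaving $h(t-\tau;(1+\beta)b,\mu_+)$ outside.

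The heart of the computation is then the inner integral $\int_{-\infty}^{z} e^{2\mu_- y}\,h(\tau;(1-\beta)b-y,-\mu_-)\,\mathrm{d}y$. Unfolding the definition \eqref{func:h} of $h$, I would substitute $u=(1-\beta)b-y$, which is positive throughout the range so that $|u|=u$. This recasts the integral as $e^{2\mu_-(1-\beta)b}\int_{(1-\beta)b-z}^{\infty}\frac{u}{\sqrt{2\pi\tau^3}}\,e^{-2\mu_- u-(u-\mu_-\tau)^2/(2\tau)}\,\mathrm{d}u$, and completing the square in the exponent makes the $\mu_-$-linear terms cancel, recentering the Gaussian to $-(u+\mu_-\tau)^2/(2\tau)$.

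Next I would split the surviving factor as $u=(u+\mu_-\tau)-\mu_-\tau$. The $(u+\mu_-\tau)$ piece is an exact derivative of a Gaussian and integrates to the boundary term $\frac{1}{\sqrt{2\pi\tau}}\exp\!\big(-((1-\beta)b-z+\mu_-\tau)^2/(2\tau)\big)$, while the $-\mu_-\tau$ piece, after the substitution $w=(u+\mu_-\tau)/\sqrt{2\tau}$, yields exactly $-\frac{\mu_-}{2}\,\mathrm{erfc}\!\left(\frac{(1-\beta)b-z+\mu_-\tau}{\sqrt{2\tau}}\right)$; reinserting these into the outer $(\tau,b)$-integral reproduces the claimed $z<0$ formula. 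For $z>0$ I would instead use $\mathds{P}(Z\le z)=1-\int_{z}^{\infty}p(t;y)\,\mathrm{d}y$ with the $y>0$ branch, first exploiting the symmetry of the time-convolution (relabel $\tau\mapsto t-\tau$ in the convolution identity for $h$) so that the $y$-dependent factor $h(\tau;(1+\beta)b+y,\mu_+)$ carries the time $\tau$ and $h(t-\tau;(1-\beta)b,-\mu_-)$ survives outside. Integrating $\int_{z}^{\infty} e^{2\mu_+ y}\,h(\tau;(1+\beta)b+y,\mu_+)\,\mathrm{d}y$ by the analogous substitution $u=(1+\beta)b+y$ and split $u=(u-\mu_+\tau)+\mu_+\tau$ produces the corresponding Gaussian term together with $+\frac{\mu_+}{2}\,\mathrm{erfc}\!\left(\frac{(1+\beta)b+z-\mu_+\tau}{\sqrt{2\tau}}\right)$, which upon subtraction from $1$ gives the second displayed expression.

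The main obstacle I anticipate is purely the Gaussian bookkeeping: keeping the sign conventions straight while completing the square, and ensuring the split of $u$ delivers the boundary term and the error-function term with the correct prefactors (the $\mp$ before the $\mathrm{erfc}$, and the flip between $\mu_-$ with argument $(1-\beta)b-z$ for $z<0$ and $\mu_+$ with argument $(1+\beta)b+z$ for $z>0$). The interchange of integrals is harmless by nonnegativity, and the only genuinely substantive identity is recognizing the incomplete Gaussian tail integral as the complementary error function.
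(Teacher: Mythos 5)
Your proof is correct and follows exactly the route the paper intends for Proposition \ref{prop:cdf} (which is stated there without a written proof): integrate the transition density of Theorem \ref{thm:pdf_sbm} over $(-\infty,z]$ or $[z,\infty)$, interchange integrals by Tonelli, complete the square to recenter the Gaussian, and split the linear factor $u$ to produce the boundary Gaussian term and the $\mathrm{erfc}$ term. All sign conventions and prefactors in your computation check out against the stated formulas, including the positivity of $(1\mp\beta)b\mp y$ that justifies dropping the absolute value in $h$.
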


Based on Proposition \ref{prop:pmeasure_x} and Corollary \ref{cor:laplace_convolution} we can derive the transition probability of the refracted skew Brownian motion when the starting point is any real value $x$. 

\begin{proposition} \label{prop:sbm_one_drift}
    The transition probability of the refracted skew Brownian motion, with skew level $a=0$ and starting point $x$ is given by:\\
    for $y>0$
\begin{equation*}
p(t;x,y) =
    \begin{cases}
        \Big[ \phi(y-x;\mu_+) - e^{-2\mu_+ x} \phi(x+y;\mu_+) \Big]     &  \\
        + 2(1+\beta) e^{2\mu_+ y} \int_{0}^{\infty} \int_{0}^{t} h(t-\tau; (1+\beta)b + x + y, \mu_+) h(\tau; (1-\beta)b, -\mu_-)  \mathrm{d} \tau  \mathrm{d} b ,  &  x \geq0, \\
         & \\
        2(1+\beta) e^{2\mu_+ y}  \int_{0}^{\infty} \int_{0}^{t} h(\tau; (1+\beta)b+y, \mu_+)  h(t-\tau; (1-\beta)b -x, -\mu_-) \mathrm{d} \tau \mathrm{d} b , &  x < 0
    \end{cases}
\end{equation*}
and for $y<0$
\begin{equation*}
p(t;x,y) =
    \begin{cases}
        2(1-\beta) e^{2\mu_- y} \int_{0}^{\infty} \int_{0}^{t} h(t - \tau; (1+\beta)b+x, \mu_+) h(\tau; (1-\beta)b-y, -\mu_-)   \mathrm{d} \tau \mathrm{d} b ,  &  x >0, \\
         & \\
       \Big[ \phi(y-x;\mu_-) - e^{2\mu_- y} \phi(-x-y;\mu_-) \Big]   &  \\
    + 2(1-\beta) e^{2 \mu_- y} \int_{0}^{\infty} \int_{0}^{t}  h(\tau; (1+\beta)b, \mu_+)  h(t - \tau; (1-\beta)b-y-x, -\mu_-)   \mathrm{d} \tau \mathrm{d} b, &  x \leq 0,
    \end{cases}
\end{equation*}
where $\phi(z; \mu)$ is the density function of a Brownian motion with drift $\mu$ defined by: 
\[
\phi(z; \mu) = \frac{1}{\sqrt{2 \pi t}} \exp{\frac{-(z-\mu t)^2}{2 t}}\mathrm{d}z.
\]
\end{proposition}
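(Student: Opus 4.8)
The plan is to verify the four claimed expressions by matching time-Laplace transforms, exploiting the one-to-one correspondence between a density $p(t;x,y)$ and its Laplace transform. The starting point is the identity $\mathds{P}_x(X_{e_q}\in\mathrm{d}y) = q\int_0^\infty e^{-qt}\,\mathds{P}_x(X_t\in\mathrm{d}y)\,\mathrm{d}t$, so recovering $p(t;x,y)$ amounts to inverting $q^{-1}$ times the potential density furnished by Proposition \ref{prop:pmeasure_x}. By uniqueness of the Laplace transform it therefore suffices, in each of the four sign combinations of $(x,y)$, to show that $\int_0^\infty e^{-qt}p(t;x,y)\,\mathrm{d}t$ reproduces $q^{-1}\mathds{P}_x(X_{e_q}\in\mathrm{d}y)/\mathrm{d}y$.

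Two ingredients drive the term-by-term inversion. First, I would record the Laplace transform of the drifted Gaussian kernel, $\int_0^\infty e^{-qt}\phi(z;\mu)/\mathrm{d}z\,\mathrm{d}t = (2q+\mu^2)^{-1/2}\exp\!\bigl(\mu z - |z|\sqrt{2q+\mu^2}\bigr)$, which handles the two ``free'' contributions $\phi(y-x;\mu_\pm)$ together with the image term $e^{\mp 2\mu_\pm x}\phi(\cdot;\mu_\pm)$ in the same-sign cases (CASE 1 of Proposition \ref{prop:pmeasure_x}, with $x,y\ge 0$, and its $y<0$ analogue CASE 4). Here the image exponent $e^{\mp 2\mu_\pm x}$ is precisely what turns the factor $e^{\mu_\pm(y-x)}$ in the potential density into the Gaussian Laplace transform evaluated at the point $x+y$ (respectively $-x-y$), which is positive on the relevant region, so the sign branch is fixed. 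Second, for the double-convolution terms I would apply the convolution theorem in the time variable, so that their Laplace transform factors as a product $\hat h(q;\cdot,\mu_+)\,\hat h(q;\cdot,-\mu_-)$, evaluate each factor through (\ref{eq:lap_tr_h}), and then carry out the elementary integral $\int_0^\infty e^{-Cb}\,\mathrm{d}b = C^{-1}$. This reproduces exactly the denominator $C = (1+\beta)\bigl(\mu_+ + \sqrt{\mu_+^2+2q}\bigr) + (1-\beta)\bigl(-\mu_- + \sqrt{\mu_-^2+2q}\bigr)$ of $\hat f$ in Corollary \ref{cor:laplace_convolution}, and, once combined with the prefactor $2(1\pm\beta)e^{2\mu_\pm y}$, the residual exponentials in $x$ and $y$ collapse onto the potential density.

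The crossing cases (CASE 2 and CASE 3, where $x$ and $y$ carry opposite signs) contain only the convolution term and are the cleanest to verify: one checks that for $b\in(0,\infty)$ the arguments $(1+\beta)b+y$ and $(1-\beta)b-x$, and their analogues, are nonnegative, so that $\mathrm{sgn}(\cdot)$ in (\ref{eq:lap_tr_h}) selects the $+\sqrt{\cdot}$ branch throughout; the prefactor $e^{2\mu_+ y}$ then cancels the $\mu_+ y$ contribution emerging from $\hat h$, leaving $e^{(\mu_+-\sqrt{\mu_+^2+2q})y}$, which matches Proposition \ref{prop:pmeasure_x} exactly.

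The main obstacle I anticipate is purely bookkeeping rather than conceptual: ensuring across all four cases that every argument of $h$ retains a definite sign on the integration region $b>0$, so that the sign-dependent square-root branch in (\ref{eq:lap_tr_h}) is the intended one, and tracking how the several prefactors $e^{2\mu_\pm y}$ and $e^{\mp 2\mu_\pm x}$ combine with the exponentials coming out of $\hat h$. Once this branch and exponent arithmetic is settled, each case reduces to the single $b$-integral above, and the verification is immediate.
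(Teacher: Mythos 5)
Your proposal is correct, and the checks you flag as the main risk do go through: in each of the four sign cases the arguments of $h$ stay positive on the integration region $b>0$, so the sign in (\ref{eq:lap_tr_h}) selects the intended branch, the convolution theorem turns the inner $\tau$-integral into a product of two $\hat h$ factors, and the $b$-integral $\int_0^\infty e^{-Cb}\,\mathrm{d}b = C^{-1}$ produces exactly the denominator appearing in Proposition \ref{prop:pmeasure_x}. However, your route runs in the opposite direction from the paper's own proof of this proposition. The paper derives the formula by \emph{inversion}: it recognizes each piece of the potential density in Proposition \ref{prop:pmeasure_x} as a product of known transforms ($\hat f^{\pm}$ from Corollary \ref{cor:laplace_convolution} times $\hat h$ of the starting point $x$), inverts that product to a time convolution $f^{\pm}(t;y)*h(t;\cdot)$, and then needs an exchange of integration order, the substitution $u=t-\eta$, and the composition identity $h(t;x_1+x_2,\mu)=\int_0^t h(t-\tau;x_1,\mu)h(\tau;x_2,\mu)\,\mathrm{d}\tau$ to merge the two same-drift kernels into the single kernel with combined argument such as $(1+\beta)b+x+y$. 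Your \emph{verification} direction — forward-transform the claimed formula and invoke uniqueness of the Laplace transform — sidesteps the merging identity and all of that convolution bookkeeping; each case reduces to (\ref{eq:lap_tr_h}) plus one elementary $b$-integral, with the interchanges justified by Tonelli since every integrand is nonnegative. You are also more explicit than the paper about the factor $q^{-1}$ relating the potential density to the transform of $p$. What your approach gives up is constructiveness (it presupposes the formula rather than producing it); what it buys is brevity and fewer delicate manipulations. It is worth noting that your direction is precisely how the paper proves the $x=0$ case (Theorem \ref{thm:pdf_sbm}), so your proof amounts to extending that verification style to general $x$, where the paper instead switches to the inversion argument.
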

The proof is provided in appendix \ref{app:proof_pdf_general}.

\begin{corollary}
    The transition probability of the refracted Brownian motion with starting point $x$ is given by:
    
For $y>0$,
\begin{equation*}
p(t;x,y) =
    \begin{cases}
        \Big[ \phi(y-x;\mu_+) - e^{-2\mu_+ x} \phi(x+y;\mu_+) \Big]    &  \\
        + 2 e^{2\mu_+ y} \int_{0}^{\infty} \int_{0}^{t} h(t-\tau; b + x + y, \mu_+) h(\tau; b, -\mu_-)  \mathrm{d} \tau  \mathrm{d} b,  &  x \geq 0,\\
         & \\
        2 e^{2\mu_+ y}  \int_{0}^{\infty} \int_{0}^{t} h(\tau; b+y, \mu_+)  h(t-\tau; b -x, -\mu_-) \mathrm{d} \tau \mathrm{d} b, &  x < 0 ,
    \end{cases}
\end{equation*}
and for $y<0$,
\begin{equation*}
p(t;x,y) =
    \begin{cases}
        2 e^{2\mu_- y} \int_{0}^{\infty} \int_{0}^{t} h(t - \tau; b+x, \mu_+) h(\tau; b-y, -\mu_-)   \mathrm{d} \tau \mathrm{d} b ,  &  x >0, \\
         & \\
       \Big[ \phi(y-x;\mu_-) - e^{2\mu_- y} \phi(-x-y;\mu_-) \Big]  &  \\
    + 2 e^{2 \mu_- y} \int_{0}^{\infty} \int_{0}^{t}  h(\tau; b, \mu_+)  h(t - \tau; b-y-x, -\mu_-)   \mathrm{d} \tau \mathrm{d} b, &  x \leq 0.
    \end{cases}
\end{equation*}
\end{corollary}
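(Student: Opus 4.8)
The plan is to obtain this corollary directly as the $\beta = 0$ specialization of Proposition \ref{prop:sbm_one_drift}. First I would observe that when $\beta = 0$ the local-time coefficient in SDE \eqref{sde0_2drifts} vanishes, so $X$ solves $X_t = X_0 + B_t + \mu_+\int_0^t \mathds{1}_{\{X_s>0\}}\,\mathrm{d}s + \mu_-\int_0^t \mathds{1}_{\{X_s<0\}}\,\mathrm{d}s$, which is exactly Brownian motion whose drift switches from $\mu_-$ to $\mu_+$ as it crosses the origin, i.e. the refracted Brownian motion with no skewing at $0$. Correspondingly, the interface condition $(1+\beta)g'(0^+) = (1-\beta)g'(0^-)$ collapses to the ordinary $C^1$-matching $g'(0^+) = g'(0^-)$, so the entire construction underlying Proposition \ref{prop:sbm_one_drift} remains valid at $\beta = 0$.

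I would then set $\beta = 0$ in each of the four branches of Proposition \ref{prop:sbm_one_drift}. Under this substitution the prefactors $2(1+\beta)$ and $2(1-\beta)$ both reduce to $2$, while the arguments $(1+\beta)b$ and $(1-\beta)b$ appearing inside $h$ both reduce to $b$. The Brownian-with-drift density terms $\phi(y-x;\mu_\pm)$ together with their image (reflection) counterparts contain no $\beta$ and hence pass through the substitution unchanged. Reading off the four cases ($x\ge 0,\,y>0$; $x<0,\,y>0$; $x>0,\,y<0$; $x\le 0,\,y<0$) then yields precisely the displayed expressions, which completes the proof.

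Since this is a direct specialization of an already-established result, I do not expect any genuine obstacle: the only point worth verifying is that $\beta = 0$ lies in the admissible range $(-1,1)$ and that the coefficients $c_1(q), c_2(q)$ and the convolution identity for $h$ invoked in Proposition \ref{prop:sbm_one_drift} remain well defined there, which they do. If a self-contained derivation were preferred, one could instead rerun the Markov property and memoryless decomposition used in the skew case verbatim, now treating the origin as a transparent (non-skew) point; but invoking Proposition \ref{prop:sbm_one_drift} is by far the most economical route.
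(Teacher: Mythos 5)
Your proposal is correct and matches the paper's (implicit) derivation: the corollary is stated there without a separate proof precisely because it is the direct $\beta = 0$ specialization of Proposition \ref{prop:sbm_one_drift}, with the prefactors $2(1\pm\beta)$ reducing to $2$ and the arguments $(1\pm\beta)b$ reducing to $b$, exactly as you argue. Your added observations — that the SDE loses its local-time term and the interface condition becomes ordinary $C^1$-matching at $\beta=0$ — are sound sanity checks but not needed beyond the substitution itself.
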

This recovers the result in \cite{karatzas1984trivariate}.

%{\bf Find the jump size of density at level $a$.}

\begin{proposition}
    For the refracted  skew Brownian motion with $a=0$, the  size of  discontinuity for $p(t;0,y)$ at $y=0$ is given by
    \begin{equation*}
        p(t;0,0^+)-p(t;0,0^-) = 4 \beta \int_{0}^{\infty} \int_{0}^{t} h(t- \tau; (1+\beta) b , \mu_+) h(\tau; (1-\beta)b, -\mu_-) \mathrm{d}\tau \mathrm{d}b
    \end{equation*}
\end{proposition}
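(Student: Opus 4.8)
The plan is to read the two one-sided limits straight off the density formula in Theorem \ref{thm:pdf_sbm}, evaluated at starting point $x=0$, so that $p(t;0,y)=p(t;y)$. For the right limit I would start from the $y>0$ branch
\[
p(t;y) = 2(1+\beta)\, e^{2\mu_+ y} \int_{0}^{\infty}\!\!\int_{0}^{t} h\big(t-\tau;(1+\beta)b + y,\mu_+\big)\, h\big(\tau;(1-\beta)b,-\mu_-\big)\,\mathrm{d}\tau\,\mathrm{d}b,
\]
and for the left limit from the $y<0$ branch
\[
p(t;y) = 2(1-\beta)\, e^{2\mu_- y} \int_{0}^{\infty}\!\!\int_{0}^{t} h\big(t-\tau;(1+\beta)b,\mu_+\big)\, h\big(\tau;(1-\beta)b - y,-\mu_-\big)\,\mathrm{d}\tau\,\mathrm{d}b.
\]

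The key steps are as follows. First I would note that the function $h(t;x,\mu)$ defined in (\ref{func:h}) is jointly continuous in $(x,\mu)$ for each fixed $t>0$, and that the scalar prefactors satisfy $e^{2\mu_+ y}\to 1$ as $y\to 0^+$ and $e^{2\mu_- y}\to 1$ as $y\to 0^-$. Passing $y\to 0^+$ in the first display sends the integrand to $h(t-\tau;(1+\beta)b,\mu_+)\,h(\tau;(1-\beta)b,-\mu_-)$, and passing $y\to 0^-$ in the second display sends its integrand to exactly the same quantity. Writing
\[
I(t) := \int_{0}^{\infty}\!\!\int_{0}^{t} h\big(t-\tau;(1+\beta)b,\mu_+\big)\, h\big(\tau;(1-\beta)b,-\mu_-\big)\,\mathrm{d}\tau\,\mathrm{d}b,
\]
I obtain $p(t;0,0^+)=2(1+\beta)\,I(t)$ and $p(t;0,0^-)=2(1-\beta)\,I(t)$. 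Subtracting, the common integral factors out and the difference of prefactors gives $2(1+\beta)-2(1-\beta)=4\beta$, which is precisely the claimed formula.

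The main obstacle is justifying that the one-sided limits may be taken inside the double integral, i.e. the interchange of $\lim_{y\to 0^\pm}$ with $\int_0^\infty\!\int_0^t$. I would handle this by dominated convergence: for $y$ restricted to a bounded neighborhood of $0$, the prefactors $e^{2\mu_\pm y}$ are bounded, and the Gaussian form of $h$ in (\ref{func:h}) provides a uniform integrable majorant in $b$ (the factor $h(t-\tau;(1+\beta)b+y,\mu_+)$ decays like $e^{-c b^2}$ uniformly for small $y$, since the shift by $y$ is bounded). Integrability of the limiting integrand $I(t)$ itself is guaranteed a posteriori because, by Theorem \ref{thm:pdf_sbm}, $p(t;y)$ is a genuine (finite) transition density on each side of $0$, so $I(t)<\infty$. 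With the dominating function in place, the interchange is legitimate and the remaining computation is the elementary algebra above.
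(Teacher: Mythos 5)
Your proposal is correct and is essentially the argument the paper intends: the proposition is stated as an immediate consequence of Theorem \ref{thm:pdf_sbm} (with $p(t;0,y)=p(t;y)$), obtained by letting $y\to 0^{\pm}$ in the two branches so that the common double integral factors out and the prefactors $2(1+\beta)$ and $2(1-\beta)$ differ by $4\beta$. Your added dominated-convergence justification for passing the one-sided limits inside the integral is a reasonable supplement to what the paper leaves implicit.
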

 If $\beta = 0$, i.e. there is no skewness in the process, the jump size is zero and the process is continuous. When $\mu_- = \mu_+ = \mu$, then the jump size is:
\[
\frac{\sqrt{2} \beta}{\sqrt{\pi t}} e^{\frac{-\mu^2 t}{2}} \Big( 1- \frac{\beta \mu}{2} \sqrt{2 \pi t} \mathrm{erfc}\Big( \frac{\beta \mu t}{\sqrt{2 t}} \Big) \Big).
\]
When $-\mu_- = \mu_+ = \mu$, i.e. alternating drifts, the jump size is:
\[
\frac{\sqrt{2} \beta}{\sqrt{\pi t}} e^{\frac{-\mu^2 t}{2}} - \beta \mu \mathrm{erfc} \Big( \frac{\mu t}{\sqrt{2 t}} \Big).
\]
In both cases, when $\mu = 0$, the jump size is a linear function of $\beta$:
\[
\frac{\sqrt{2} \beta}{\sqrt{ \pi t}}, 
\hspace{1cm} t>0.
\]
It means that for fixed $t$, when $|\beta|$ is close to $1$, the absolute value of the jump increases. 

Based on alternating case we can argue that when $\mu_- \rightarrow -\infty$ and $\mu_+ \rightarrow \infty $, the jump size tends to $0$, and when $\mu_- \rightarrow \infty$ and $\mu_+ \rightarrow -\infty $, the absolute value of jump size tends to $\infty$.

\section{Long term behaviors} \label{sec:limitdensity}
In this section, we investigate the behavior of an RSBM as $t$ approaches infinity. When $\mu_- > 0$ and $\mu_+ < 0$, as $t$ grows larger and larger, the transition probability of the SBM converges to a smooth density that is independent of the starting point $x$.

\begin{proposition}
Given $\mu_->0$ and $\mu_+<0$, for any $x\in \mathds{R}$ we have
 \[\lim_{t\to\infty}p(t;x,y)=
    \begin{cases} 
    \frac{-2(1+\beta)\mu_+ \mu_-}{(1+\beta)\mu_- - (1-\beta) \mu_+} e^{2\mu_+ y}, & y>0,\\
    \frac{-2(1-\beta)\mu_+ \mu_-}{(1+\beta)\mu_- - (1-\beta) \mu_+} e^{2\mu_- y}, & y<0.\\
    \end{cases}
    \]
\end{proposition}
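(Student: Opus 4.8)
The plan is to identify the limiting density as the $q\to 0^+$ limit of the potential (resolvent) density already computed in Proposition~\ref{prop:pmeasure_x}. The starting observation is that $\mathds{P}_x(X_{e_q}\in\mathrm{d}y)=q\int_0^\infty e^{-qt}p(t;x,y)\,\mathrm{d}t\,\mathrm{d}y$, so $\mathds{P}_x(X_{e_q}\in\mathrm{d}y)/\mathrm{d}y$ is precisely $q$ times the Laplace transform of $t\mapsto p(t;x,y)$. When $\mu_->0$ and $\mu_+<0$ the drift points toward the skew level $a=0$ from both sides, so the diffusion is positive recurrent and $p(t;x,y)$ converges as $t\to\infty$ to a stationary density independent of $x$. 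Granting this convergence, the Abelian theorem for Laplace transforms yields
\[
\lim_{t\to\infty}p(t;x,y)=\lim_{q\to 0^+}\frac{\mathds{P}_x(X_{e_q}\in\mathrm{d}y)}{\mathrm{d}y},
\]
and it suffices to evaluate the right-hand side from the four cases of Proposition~\ref{prop:pmeasure_x}.

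The computation rests on the small-$q$ expansion $\sqrt{\mu_\pm^2+2q}=|\mu_\pm|+q/|\mu_\pm|+O(q^2)$, where under our sign assumptions $|\mu_+|=-\mu_+$ and $|\mu_-|=\mu_-$. Substituting this into the common denominator
\[
D(q):=(1+\beta)\mu_+-(1-\beta)\mu_-+(1+\beta)\sqrt{\mu_+^2+2q}+(1-\beta)\sqrt{\mu_-^2+2q},
\]
the order-one terms cancel exactly—this is where the sign condition is essential—leaving $D(q)=q\bigl[(1+\beta)/|\mu_+|+(1-\beta)/|\mu_-|\bigr]+O(q^2)$. Each case of Proposition~\ref{prop:pmeasure_x} carries a numerator proportional to $q$, so the ratio $q/D(q)$ tends to a finite constant; clearing denominators by $-\mu_+\mu_-$ turns this constant into the claimed coefficient $-2(1\pm\beta)\mu_+\mu_-/[(1+\beta)\mu_--(1-\beta)\mu_+]$. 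As a consistency check one verifies that the resulting two-piece function integrates to $1$, confirming that it is a genuine stationary law.

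First I would treat the two cases in which $x$ and $y$ lie on the same side of the origin, where a ``free'' Brownian contribution appears. In those formulas that extra term carries a factor $q/\sqrt{\mu_\pm^2+2q}\to 0$, so it drops out, and the surviving exponent $\mu_\pm(y-x)\mp(x+y)|\mu_\pm|$ collapses to $2\mu_\pm y$, erasing the $x$-dependence. In the two cross cases ($x,y$ on opposite sides) the sole $x$-dependent factor is $e^{\pm(\sqrt{\mu_\mp^2+2q}-|\mu_\mp|)x}$, whose exponent tends to $0$ as $q\to0^+$; again the $x$-dependence disappears and the $y$-exponential limits to $e^{2\mu_\pm y}$. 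Collecting the four cases delivers the stated density.

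The main obstacle is the Abelian/Tauberian step: the $q\to0^+$ computation identifies the limit only under the a priori assumption that $\lim_{t\to\infty}p(t;x,y)$ exists. I would secure this either by invoking positive recurrence of the one-dimensional diffusion (the inward drift makes the speed measure finite, so the process is ergodic with the computed stationary law), or, for a self-contained argument, by passing to the limit directly in the representation of Proposition~\ref{prop:sbm_one_drift}: the genuinely Gaussian terms $\phi(\cdot;\mu_\pm)$ decay like $e^{-\mu_\pm^2 t/2}/\sqrt{t}$, while the double integral of the $h$-kernels is shown to converge by monotone/dominated convergence. Verifying that this direct limit agrees with the resolvent computation is the one place where genuine care is needed.
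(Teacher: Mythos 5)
Your proposal follows essentially the same route as the paper's own proof: both identify $\lim_{t\to\infty}p(t;x,y)$ with $\lim_{q\to 0^+}\mathds{P}_x(X_{e_q}\in \mathrm{d}y)/\mathrm{d}y$, evaluate the four cases of Proposition \ref{prop:pmeasure_x}, discard the ``free'' Brownian term via its factor of $q$, and use the sign conditions $|\mu_+|=-\mu_+$, $|\mu_-|=\mu_-$ to extract the limit (the paper via l'H\^{o}pital's rule, you via the small-$q$ expansion of $\sqrt{\mu_\pm^2+2q}$ --- an immaterial computational difference). Your explicit treatment of the Abelian/Tauberian step (securing existence of the time limit through positive recurrence, or directly from Proposition \ref{prop:sbm_one_drift}) is in fact more careful than the paper, which only remarks that ``the desired result follows from a property of the Laplace transform.''
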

\begin{proof}
We use the potential measure of SBM to prove this proposition since as $q$ tends to $0$, $t$ tends to infinity. Suppose $\mu_->0$, $\mu_+<0$, from Proposition (\ref{prop:pmeasure_x}),  when $y>0$ and $x \geq 0$, we have 
\begin{equation*} 
\begin{split}
     & \lim_{t\to\infty}p(t;x,y) =  \lim_{q \rightarrow 0} \frac{1}{\mathrm{d}y} \mathds{P}_x(X_{e_q} \in \mathrm{d}y) \\
     & = \lim_{q \rightarrow 0}\frac{q e^{\mu_+ (y-x)}}{\sqrt{\mu_+^2 + 2q}} \Big( e^{- |y-x|\sqrt{\mu_+^2 + 2q}} - e^{- (x+y)\sqrt{\mu_+^2 + 2q}}  \Big) \\
    & + \lim_{q \rightarrow 0} \frac{2(1+\beta)q }{(1+\beta) \mu_+ - (1-\beta) \mu_- + (1+\beta) \sqrt{\mu_+^2 + 2q} + (1-\beta)\sqrt{\mu_-^2 + 2q}} e^{\mu_+(y-x) - (x+y)\sqrt{\mu_+^2 + 2q}} \\
     &=\lim_{q \rightarrow 0} \frac{2(1+\beta) e^{\mu_+(y-x) - (x+y)\sqrt{\mu_+^2 + 2q}} + 2(1+\beta) q \frac{\mathrm{d}}{\mathrm{d} q} \Big( e^{\mu_+(y-x) - (x+y)\sqrt{\mu_+^2 + 2q}} \Big) }{(1+\beta)(\mu^2_+ + 2q)^{\frac{-1}{2}} + (1-\beta) (\mu^2_- + 2q)^{\frac{-1}{2}}} \hspace{1cm} \text{l'H\^{o}pital's rule} \\
     &=\frac{2(1+\beta) e^{\mu_+(y-x) - (x+y)|\mu_+|}}{(1+\beta)|\mu_+|^{-1} + (1-\beta) |\mu_-|^{-1}} \hspace{3cm} |\mu_+|=-\mu_+ \hspace{0.2cm} \text{and} \hspace{0.2cm} |\mu_-|= \mu_- \\
     % &=\frac{2(1+\beta)}{\frac{(1+\beta)}{-\mu_+}+\frac{(1-\beta)}{\mu_-}}e^{2\mu_+ y} \hspace{3cm} |\mu_+|=-\mu_+ \hspace{0.2cm} \text{and} \hspace{0.2cm} |\mu_-|= \mu_- \\
     &=\frac{-2(1+\beta)\mu_+ \mu_-}{(1+\beta)\mu_- - (1-\beta) \mu_+} e^{2\mu_+ y}.
\end{split}
\end{equation*}

When $y>0$ and $x \leq 0$, we have 
\begin{equation*} 
\begin{split}
     & \lim_{t\to\infty}p(t;x,y) =  \lim_{q \rightarrow 0} \frac{1}{\mathrm{d}y} \mathds{P}_x(X_{e_q} \in \mathrm{d}y) \\
     & = \lim_{q \rightarrow 0} \frac{2(1+\beta)q}{(1+\beta) \mu_+ - (1-\beta) \mu_- + (1+\beta) \sqrt{\mu_+^2 + 2q} + (1-\beta)\sqrt{\mu_-^2 + 2q}} e^{(-\mu_-  + \sqrt{\mu_-^2 + 2q}) x} e^{(\mu_+ - \sqrt{\mu_+^2 + 2q})y} \\
     &=\lim_{q \rightarrow 0} \frac{2(1+\beta) e^{(-\mu_-  + \sqrt{\mu_-^2 + 2q}) x} e^{(\mu_+ - \sqrt{\mu_+^2 + 2q})y} + 2(1+\beta) q \frac{\mathrm{d}}{\mathrm{d}q}\Big( e^{(-\mu_-  + \sqrt{\mu_-^2 + 2q}) x} e^{(\mu_+ - \sqrt{\mu_+^2 + 2q})y} \Big) }{(1+\beta)(\mu^2_+ + 2q)^{\frac{-1}{2}} + (1-\beta) (\mu^2_- + 2q)^{\frac{-1}{2}}} \hspace{1cm} \text{l'H\^{o}pital's rule} \\
     &=\frac{2(1+\beta) e^{(-\mu_-  + |\mu_-|) x} e^{(\mu_+ - |\mu_+|)y} }{(1+\beta)|\mu_+|^{-1} + (1-\beta) |\mu_-|^{-1}} \hspace{3cm} |\mu_+|=-\mu_+ \hspace{0.2cm} \text{and} \hspace{0.2cm} |\mu_-|= \mu_-  \\
     % &=\frac{2(1+\beta)}{\frac{(1+\beta)}{-\mu_+}+\frac{(1-\beta)}{\mu_-}}e^{2\mu_+ y} \hspace{3cm} |\mu_+|=-\mu_+ \hspace{0.2cm} \text{and} \hspace{0.2cm} |\mu_-|= \mu_- \\
     &=\frac{-2(1+\beta)\mu_+ \mu_-}{(1+\beta)\mu_- - (1-\beta) \mu_+} e^{2\mu_+ y}.
\end{split}
\end{equation*}
The desired result follows from a property of the Laplace transform.

When $y<0$ and $x \geq 0$, we have 
\begin{equation*} 
\begin{split}
     & \lim_{t\to\infty}p(t;x,y) =  \lim_{q \rightarrow 0} \frac{1}{\mathrm{d}y} \mathds{P}_x(X_{e_q} \in \mathrm{d}y) \\
     & = \lim_{q \rightarrow 0} \frac{2(1-\beta)q}{(1+\beta) \mu_+ - (1-\beta) \mu_- + (1+\beta) \sqrt{\mu_+^2 + 2q} + (1-\beta)\sqrt{\mu_-^2 + 2q}} e^{(-\mu_+  - \sqrt{\mu_+^2 + 2q}) x} e^{(\mu_- + \sqrt{\mu_-^2 + 2q})y} \\
     &=\lim_{q \rightarrow 0} \frac{2(1-\beta) e^{(-\mu_+  - \sqrt{\mu_+^2 + 2q}) x} e^{(\mu_- + \sqrt{\mu_-^2 + 2q})y} + 2(1-\beta) q \frac{\mathrm{d}}{\mathrm{d}q}\Big( e^{(-\mu_+  - \sqrt{\mu_+^2 + 2q}) x} e^{(\mu_- + \sqrt{\mu_-^2 + 2q})y} \Big) }{(1+\beta)(\mu^2_+ + 2q)^{\frac{-1}{2}} + (1-\beta) (\mu^2_- + 2q)^{\frac{-1}{2}}} \hspace{1cm} \text{l'H\^{o}pital's rule} \\
     &=\frac{2(1-\beta) e^{(-\mu_+  - |\mu_+|) x} e^{(\mu_- + |\mu_-|)y} }{(1+\beta)|\mu_+|^{-1} + (1-\beta) |\mu_-|^{-1}} \hspace{3cm} |\mu_+|=-\mu_+ \hspace{0.2cm} \text{and} \hspace{0.2cm} |\mu_-|= \mu_-  \\
     % &=\frac{2(1-\beta)}{\frac{(1+\beta)}{-\mu_+}+\frac{(1-\beta)}{\mu_-}}e^{2\mu_- y} \hspace{3cm} |\mu_+|=-\mu_+ \hspace{0.2cm} \text{and} \hspace{0.2cm} |\mu_-|= \mu_- \\
     &=\frac{-2(1-\beta)\mu_+ \mu_-}{(1+\beta)\mu_- - (1-\beta) \mu_+} e^{2\mu_- y}.
\end{split}
\end{equation*}

When $y<0$ and $x \leq 0$, we have 
\begin{equation*} 
\begin{split}
     & \lim_{t\to\infty}p(t;x,y) =  \lim_{q \rightarrow 0} \frac{1}{\mathrm{d}y} \mathds{P}_x(X_{e_q} \in \mathrm{d}y) \\
     & = \lim_{q \rightarrow 0}\frac{q e^{\mu_- (y-x)}}{\sqrt{\mu_-^2 + 2q}} \Big( e^{- |y-x|\sqrt{\mu_-^2 + 2q}} - e^{ (x+y)\sqrt{\mu_-^2 + 2q}}  \Big) \\
    & + \lim_{q \rightarrow 0}\frac{2(1-\beta)q }{(1+\beta) \mu_+ - (1-\beta) \mu_- + (1+\beta) \sqrt{\mu_+^2 + 2q} + (1-\beta)\sqrt{\mu_-^2 + 2q}} e^{\mu_-(y-x) + (x+y)\sqrt{\mu_-^2 + 2q}} \\
     &=\lim_{q \rightarrow 0} \frac{2(1-\beta) e^{\mu_-(y-x) + (x+y)\sqrt{\mu_-^2 + 2q}} + 2(1-\beta) q \frac{\mathrm{d}}{\mathrm{d}q}\Big( e^{\mu_-(y-x) + (x+y)\sqrt{\mu_-^2 + 2q}} \Big) }{(1+\beta)(\mu^2_+ + 2q)^{\frac{-1}{2}} + (1-\beta) (\mu^2_- + 2q)^{\frac{-1}{2}}} \hspace{1cm} \text{l'H\^{o}pital's rule} \\
     &=\frac{2(1-\beta) e^{\mu_-(y-x) + (x+y)|\mu_-|} }{(1+\beta)|\mu_+|^{-1} + (1-\beta) |\mu_-|^{-1}} \hspace{3cm} |\mu_+|=-\mu_+ \hspace{0.2cm} \text{and} \hspace{0.2cm} |\mu_-|= \mu_-  \\
     % &=\frac{2(1-\beta)}{\frac{(1+\beta)}{-\mu_+}+\frac{(1-\beta)}{\mu_-}}e^{2\mu_- y} \hspace{3cm} |\mu_+|=-\mu_+ \hspace{0.2cm} \text{and} \hspace{0.2cm} |\mu_-|= \mu_- \\
     &=\frac{-2(1-\beta)\mu_+ \mu_-}{(1+\beta)\mu_- - (1-\beta) \mu_+} e^{2\mu_- y}.
\end{split}
\end{equation*}
\end{proof}

%\section{Other properties}
%Write it as a proposition.
For $\mu_+ > 0$ and $\mu_- < 0$, the probability that an SBM, with sufficiently large $t$, tends towards either positive or negative infinity is determined by the following proposition.
\begin{proposition} \label{prop:prob_intfy}
Given $\mu_+>0$ and $\mu_-<0$, for any $x\in \mathds{R}$ we have
\begin{equation*}
\mathds{P}_x(\lim_{t\to\infty}X_t=\infty )=
    \begin{cases}
        \frac{(1+\beta) \mu_+ e^{-2 \mu_- x}}{(1+\beta)\mu_+  - (1-\beta)\mu_- }, &  x < 0, \\
         & \\
         1+\frac{  (1-\beta)\mu_- e^{-2\mu_+ x}}{(1+\beta)\mu_+  - (1-\beta)\mu_- },  &  x > 0,
    \end{cases}
\end{equation*}
and
\begin{equation*}
\mathds{P}_x (\lim_{t\to\infty}X_t=-\infty )=
    \begin{cases}
        1-\frac{ (1+\beta)\mu_+ e^{-2\mu_- x}  }{ (1+\beta) \mu_+ - (1-\beta) \mu_- }, &  x < 0, \\
         & \\
         \frac{-(1-\beta) \mu_- e^{-2\mu_+ x} }{ (1+\beta) \mu_+ - (1-\beta) \mu_- },  &  x > 0.
    \end{cases}
\end{equation*}

\end{proposition}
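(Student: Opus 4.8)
The plan is to reduce the two escape probabilities to limits of first-passage (hitting) probabilities, which are in turn read off from Theorem \ref{thm2_zhou} by letting the Laplace exponent $q\to 0^+$. First I would establish the dichotomy that, under $\mu_+>0$ and $\mu_-<0$, the process is transient and converges almost surely to $+\infty$ or $-\infty$, so that $\mathds{P}_x(\lim_{t\to\infty}X_t=\infty)+\mathds{P}_x(\lim_{t\to\infty}X_t=-\infty)=1$. The cleanest justification is via the scale function $s$ of the diffusion: on $(0,\infty)$ the scale density is proportional to $e^{-2\mu_+ x}$ and on $(-\infty,0)$ to $e^{-2\mu_- x}$, and since $\mu_+>0$, $\mu_-<0$ both tails are integrable, giving $s(\pm\infty)$ finite; hence $X$ leaves every compact set and settles at one of the two ends.

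Next I would observe that $\{\lim_t X_t=-\infty\}=\bigcap_{r}\{\tau_r<\infty\}$ up to a null set: to reach $-\infty$ the continuous path must cross every level $r<x$, and conversely hitting arbitrarily negative levels forces $\liminf_t X_t=-\infty$, which under the dichotomy means $\lim_t X_t=-\infty$. Since $\{\tau_{r'}<\infty\}\subseteq\{\tau_r<\infty\}$ for $r'<r<x$, decreasing convergence of events yields $\mathds{P}_x(\lim_t X_t=-\infty)=\lim_{r\to-\infty}\mathds{P}_x(\tau_r<\infty)$, and symmetrically $\mathds{P}_x(\lim_t X_t=\infty)=\lim_{r\to\infty}\mathds{P}_x(\tau_r<\infty)$. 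Each first-passage probability is recovered from the Laplace transform by monotone convergence in $q$: $e^{-q\tau_r}\uparrow\mathbf{1}_{\{\tau_r<\infty\}}$ as $q\downarrow0$, so $\mathds{P}_x(\tau_r<\infty)=\lim_{q\to0^+}\mathds{E}_x[e^{-q\tau_r}]$, with the right-hand side supplied by (\ref{lap1})--(\ref{lap2}).

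It then remains to compute the iterated limit. Letting $q\to0^+$ with $\mu_+>0$ and $\mu_-<0$ gives $\rho_1^+\to-2\mu_+$, $\rho_2^+\to0$, $\rho_1^-\to0$, $\rho_2^-\to-2\mu_-$, together with $c_1(q)\to\frac{(1+\beta)\mu_+}{(1-\beta)\mu_-}$ and $c_2(q)\to\frac{(1-\beta)\mu_-}{(1+\beta)\mu_+}$. For $\mathds{P}_x(\lim X=\infty)$ I would use (\ref{lap2}) with $r>0$, so the $q\to0$ limit of the denominator is $(1-c_2(0))+c_2(0)e^{-2\mu_+ r}\to 1-c_2(0)$ as $r\to\infty$ (since $-2\mu_+<0$), while the numerator $g_{2,0}(x)$ equals $e^{-2\mu_- x}$ for $x<0$ and $(1-c_2(0))+c_2(0)e^{-2\mu_+ x}$ for $x>0$; dividing and simplifying with $D:=(1+\beta)\mu_+-(1-\beta)\mu_-$ yields the two stated cases. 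The probability of escaping to $-\infty$ is obtained identically from (\ref{lap1}) with $r<0$, $r\to-\infty$ (now $e^{-2\mu_- r}\to0$), or simply by subtracting from $1$; a useful consistency check is that the two expressions sum to $1$ for each sign of $x$.

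The main obstacle is the first, qualitative step --- rigorously pinning down the dichotomy and hence the identity $\{\lim X=-\infty\}=\bigcap_r\{\tau_r<\infty\}$ almost surely --- because the generator is only piecewise smooth with a gluing condition at the skew point $0$, so one must check that the scale-function computation is valid across $0$ and that no positive-probability oscillation between the two ends survives. Once transience and a.s. convergence to $\pm\infty$ are in hand, interchanging the two limits ($q\to0^+$ first, then $r\to\pm\infty$) is justified by monotonicity, and the remaining algebra is routine.
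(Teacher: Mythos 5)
Your proposal is correct and follows essentially the same route as the paper: both reduce the escape probabilities to first-passage probabilities $\mathds{P}_x(\tau_r<\infty)=\lim_{q\to0^+}\mathds{E}_x[e^{-q\tau_r}]$ computed from Theorem \ref{thm2_zhou} (with the same limiting values of $\rho_i^{\pm}$, $c_1(q)$, $c_2(q)$), and then let the level $r$ tend to $\pm\infty$. The only difference is that you explicitly justify identifying $\{\sup_{t}X_t=\infty\}$ with $\{\lim_{t\to\infty}X_t=\infty\}$ via transience and the scale-function dichotomy, a step the paper's proof leaves implicit.
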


\begin{proof}
%for $b>0$
%\begin{equation*}
%\mathds{E}_x[e^{-q\tau_b}] =
%    \begin{cases}
%        \frac{e^{\rho^-_2x}}{(1-c_2(q)) e^{\rho^+_2b} + c_2(q) e^{\rho^+_1b}}, &  x < 0 < b, \\
%         & \\
%         \frac{(1-c_2(q)) e^{\rho^+_2x} + c_2(q) e^{\rho^+_1x}}{(1-c_2(q)) e^{\rho^+_2b} + c_2(q) e^{\rho^+_1b}},  & 0 < x < b.
%    \end{cases}
%\end{equation*}
%for $\mu_+>0$ and $\mu_-<0$. 
For $x<0<b$, by Theorem \ref{thm2_zhou} we have
\begin{equation*}
\begin{split}
\mathds{P}_x\left( \sup_{0<t<\infty} X_t \geq b\right)
&=\mathds{P}_x(\tau_b<\infty)=
\lim_{q\to 0+}\mathds{E}_x[e^{-q\tau_b}]\\
& =   \lim_{q \rightarrow 0} \frac{e^{\rho^-_2x}}{(1-c_2(q)) e^{\rho^+_2b} + c_2(q) e^{\rho^+_1b}}\\
&= \frac{(1+\beta) \mu_+ e^{-2 \mu_- x}}{(1+\beta)\mu_+  - (1-\beta)\mu_- +  (1-\beta)\mu_- e^{-2\mu_+b}}.
    \end{split}
\end{equation*}
%for large $b$, 
%\begin{eqnarray*}
%    \mathds{P}_x( \sup_{0<t< e_{q}} X_t \geq b )  =\mathds{P}_x( e_{q} > \tau_b) 
%     = \mathds{E}_xe^{-q \tau_b} %\\
    % &=& \frac{1}{(1-c_2(q)) e^{(-\mu_+ + \sqrt{\mu_+^2 + 2q})b} + c_2(q) e^{(-\mu_+ - \sqrt{\mu_+^2 + 2q})b} }
%\end{eqnarray*}
%and
%\[\mathds{P}_x\left( \sup_{0<t<\infty} X_t \geq b\right)
%=\mathds{P}_x(\tau_b<\infty)=
%\lim_{q\to 0+}\mathds{E}_x[e^{-q\tau_b}],\]
Then
\[
\mathds{P}_x\left( \sup_{0<t< \infty} X_t = \infty \right) 
=\lim_{b\to\infty}\mathds{P}_x\left( \sup_{0<t<\infty} X_t \geq b\right)
=\frac{(1+\beta) \mu_+ e^{-2 \mu_- x}}{(1+\beta)\mu_+  - (1-\beta)\mu_- }.
\] 

Similarly, for $0<x<b$,
\begin{equation*}
\begin{split}
\mathds{P}_x\left( \sup_{0<t<\infty} X_t \geq b\right)
&=\mathds{P}_x(\tau_b<\infty)=
\lim_{q\to 0+}\mathds{E}_x[e^{-q\tau_b}]\\
& =  \lim_{q \rightarrow 0} \frac{(1-c_2(q)) e^{\rho^+_2x} + c_2(q) e^{\rho^+_1x}}{(1-c_2(q)) e^{\rho^+_2b} + c_2(q) e^{\rho^+_1b}} \\
&= \frac{(1+\beta)\mu_+  - (1-\beta)\mu_- +  (1-\beta)\mu_- e^{-2\mu_+ x}}{(1+\beta)\mu_+  - (1-\beta)\mu_- +  (1-\beta)\mu_- e^{-2\mu_+b}}.
    \end{split}
\end{equation*}
Then
\[
\mathds{P}_x\left( \sup_{0<t< \infty} X_t = \infty \right) 
=\lim_{b\to\infty}\mathds{P}_x\left( \sup_{0<t<\infty} X_t \geq b\right)
=\frac{(1+\beta)\mu_+  - (1-\beta)\mu_- +  (1-\beta)\mu_- e^{-2\mu_+ x}}{(1+\beta)\mu_+  - (1-\beta)\mu_- }.
\]

%By Theorem \ref{thm2_zhou} for $b<0$:

%For $b<0$,
%\begin{equation*}
%\mathds{E}_x[e^{-q\tau_b}] =
 %   \begin{cases}
%        \frac{e^{\rho^+_1x}}{c_1(q) e^{\rho^-_2b} + (1-c_1(q)) e^{\rho^-_1b}}, &  b \leq 0 \leq x ,\\
%         & \\
 %        \frac{c_1(q) e^{\rho^-_2x} + (1-c_1(q)) e^{\rho^-_1x}}{c_1(q) e^{\rho^-_2b} + (1-c_1(q)) e^{\rho^-_1b}},  & b \leq x \leq 0.
%    \end{cases}
%\end{equation*}

For $b \leq 0 \leq x$, 
\begin{equation*}
\begin{split}
\mathds{P}_x\left( \inf_{0<t<\infty} X_t \leq b\right)
&=\mathds{P}_x(\tau_b<\infty)=
\lim_{q\to 0-}\mathds{E}_x[e^{-q\tau_b}]\\
& =  \lim_{q \rightarrow 0} \frac{e^{\rho^+_1x}}{c_1(q) e^{\rho^-_2b} + (1-c_1(q)) e^{\rho^-_1b}} \\
&= \frac{-(1-\beta) \mu_- e^{-2\mu_+ x} }{-(1+\beta)\mu_+ e^{-2\mu_- b} + (1+\beta) \mu_+ - (1-\beta) \mu_- }.
    \end{split}
\end{equation*}
It follows that
\[
\mathds{P}_x\left( \inf_{0<t< \infty} X_t = -\infty \right) 
=\lim_{b\to - \infty}\mathds{P}_x\left( \inf_{0<t<\infty} X_t \leq b\right)
= \frac{-(1-\beta) \mu_- e^{-2\mu_+ x} }{ (1+\beta) \mu_+ - (1-\beta) \mu_- }.
\]

For $b \leq x \leq 0$, 
\begin{equation*}
\begin{split}
\mathds{P}_x\left( \inf_{0<t<\infty} X_t \leq b\right)
&=\mathds{P}_x(\tau_b<\infty)=
\lim_{q\to 0^-}\mathds{E}_x[e^{-q\tau_b}]\\
& = \lim_{q \rightarrow 0} \frac{c_1(q) e^{\rho^-_2x} + (1-c_1(q)) e^{\rho^-_1x}}{c_1(q) e^{\rho^-_2b} + (1-c_1(q)) e^{\rho^-_1b}} \\
&= \frac{-(1+\beta)\mu_+ e^{-2\mu_- x} + (1+\beta) \mu_+ - (1-\beta) \mu_- }{-(1+\beta)\mu_+ e^{-2\mu_- b} + (1+\beta) \mu_+ - (1-\beta) \mu_- }.
    \end{split}
\end{equation*}
So, 
\[
\mathds{P}_x\left( \inf_{0<t< \infty} X_t = -\infty \right) 
=\lim_{b\to - \infty}\mathds{P}_x\left( \inf_{0<t<\infty} X_t \leq b\right)
= \frac{ (1+\beta) \mu_+ - (1-\beta) \mu_- - (1+\beta)\mu_+ e^{-2\mu_- x}  }{ (1+\beta) \mu_+ - (1-\beta) \mu_- }.
\]
\end{proof}

\begin{proposition}
    For $\mu_- >0$, $\mu_+<0$ and  any real value $x, z$, we have $\mathds{P}_x(\tau_z < \infty) =1$. 
\end{proposition}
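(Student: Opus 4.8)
The plan is to reduce the almost-sure finiteness of $\tau_z$ to a limit of the Laplace transform $\mathds{E}_x[e^{-q\tau_z}]$ as $q\downarrow 0$, and then read that limit off from the explicit formulas of Theorem \ref{thm2_zhou}. Since $e^{-q\tau_z}\uparrow \mathds{1}_{\{\tau_z<\infty\}}$ as $q\downarrow 0$ (with the convention $e^{-q\cdot\infty}=0$), monotone convergence gives $\mathds{P}_x(\tau_z<\infty)=\lim_{q\to 0^+}\mathds{E}_x[e^{-q\tau_z}]$. It therefore suffices to show this limit equals $1$ in every configuration of $x$ and $z$.

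The first concrete step is to record the $q\to 0^+$ limits of the auxiliary quantities under the sign assumptions $\mu_->0$ and $\mu_+<0$. Here $\delta^-\to|\mu_-|=\mu_-$ and $\delta^+\to|\mu_+|=-\mu_+$, so that $\rho_2^-\to 0^+$ and $\rho_1^+\to 0^-$, while $\rho_1^-\to -2\mu_-$ and $\rho_2^+\to -2\mu_+$. The decisive observation is that the coefficients degenerate favorably: a short computation from the definitions of $c_1(q),c_2(q)$ gives $c_1(q)\to 1$ and $c_2(q)\to 1$, hence $1-c_1(q)\to 0$ and $1-c_2(q)\to 0$. These limits are exactly what force the non-decaying exponentials $e^{\rho_2^-(\cdot)}$ and $e^{\rho_1^+(\cdot)}$ to survive with unit weight while the genuinely decaying contributions drop out; in particular $g_{1,q}(x)\to1$ and $g_{2,q}(x)\to1$ for each fixed $x$, since in both pieces of each $g$ one term tends to $1$ and the other carries a vanishing prefactor.

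Next I would split into the four cases dictated by Theorem \ref{thm2_zhou} with target $r=z$ and skew level $a=0$: namely $x\geq z$ with $z>0$, $x\geq z$ with $z\leq 0$, $x<z$ with $z\leq 0$, and $x<z$ with $z>0$. In the two direct cases the transform is a single exponential, $e^{\rho_1^+(x-z)}$ from (\ref{lap1}) or $e^{\rho_2^-(x-z)}$ from (\ref{lap2}), each tending to $e^0=1$ because $\rho_1^+,\rho_2^-\to0$. In the two remaining cases one evaluates the quotient in (\ref{lap1}) or (\ref{lap2}); using $c_1,c_2\to1$ the denominators $c_1(q) e^{\rho_2^- z} + (1-c_1(q)) e^{\rho_1^- z}$ and $(1-c_2(q)) e^{\rho_2^+ z} + c_2(q) e^{\rho_1^+ z}$ both tend to $1$, and with $g_{1,q}(x),g_{2,q}(x)\to1$ the whole quotient tends to $1$. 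Combining the four cases yields $\mathds{P}_x(\tau_z<\infty)=1$ for all $x,z$.

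The main obstacle is bookkeeping rather than conceptual: one must verify that none of the several exponential factors blow up and that the apparent $c_i\to1$, $1-c_i\to0$ cancellations leave a finite, nonzero denominator. This is precisely where the hypotheses $\mu_->0$, $\mu_+<0$ are essential, since they guarantee $\rho_2^-\to 0^+$ and $\rho_1^+\to 0^-$ (the recurrent, mean-reverting regime), so that the surviving exponentials stay bounded for fixed arguments and each denominator converges to $1$ rather than to $0$. A conceptual cross-check is that this is exactly the drift configuration under which the process admits the limiting stationary density of the preceding proposition, which is consistent with recurrence and hence with $\tau_z<\infty$ almost surely.
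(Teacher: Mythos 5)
Your proposal is correct and takes essentially the same route as the paper's proof: pass to the limit $q\to 0^+$ in the Laplace transforms of Theorem \ref{thm2_zhou} and use that, when $\mu_->0$ and $\mu_+<0$, one has $c_1(q),c_2(q)\to 1$ and $\rho_2^-,\rho_1^+\to 0$ (with $\rho_1^-\to-2\mu_-$, $\rho_2^+\to-2\mu_+$), so every quotient tends to $1$. If anything, your case split is slightly more complete than the paper's, since you also treat the two ``direct'' single-exponential cases ($0<z\leq x$ and $x<z\leq 0$), which the paper's four listed cases ($x<0<z$, $0<x<z$, $z\leq 0\leq x$, $z\leq x\leq 0$) do not explicitly cover.
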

\begin{proof}
    We follow an argument similar to that in Proposition (\ref{prop:prob_intfy}) but for $\mu_->0$ and $\mu_+<0$.
    For $x<0<z$,
    \begin{equation*}
        \mathds{P}_x(\tau_z<\infty) =
        \lim_{q\to 0+}\mathds{E}_x[e^{-q\tau_z}] =  \lim_{q \rightarrow 0} \frac{e^{\rho^-_2x}}{(1-c_2(q)) e^{\rho^+_2z} + c_2(q) e^{\rho^+_1z}} = 1;
    \end{equation*}
for $0<x<z$,
\begin{equation*}
\mathds{P}_x(\tau_z<\infty) =
\lim_{q\to 0+}\mathds{E}_x[e^{-q\tau_z}]=  \lim_{q \rightarrow 0} \frac{(1-c_2(q)) e^{\rho^+_2x} + c_2(q) e^{\rho^+_1x}}{(1-c_2(q)) e^{\rho^+_2z} + c_2(q) e^{\rho^+_1z}} = 1;
\end{equation*}
for $z \leq 0 \leq x$, 
\begin{equation*}
\mathds{P}_x(\tau_z<\infty) =
\lim_{q\to 0-}\mathds{E}_x[e^{-q\tau_z}]  =  \lim_{q \rightarrow 0} \frac{e^{\rho^+_1x}}{c_1(q) e^{\rho^-_2z} + (1-c_1(q)) e^{\rho^-_1z}} =1;
\end{equation*}  
and for $z \leq x \leq 0$, 
\begin{equation*}
\mathds{P}_x(\tau_z<\infty)=
\lim_{q\to 0-}\mathds{E}_x[e^{-q\tau_z}] = \lim_{q \rightarrow 0} \frac{c_1(q) e^{\rho^-_2x} + (1-c_1(q)) e^{\rho^-_1x}}{c_1(q) e^{\rho^-_2z} + (1-c_1(q)) e^{\rho^-_1z}} = 1.
\end{equation*}
Note that for $\mu_->0$ and $\mu_+<0$, as $q \rightarrow 0$ both $c_1(q)$ and $c_2(q)$ tend to $1$,  and $\rho_1^-$, $\rho_2^-$, $\rho_1^+$, $\rho_2^+$ tends to $-2\mu_-$, $0$, $0$, $-2\mu_+$, respectively.
\end{proof}

\begin{proposition}
    Given any real value $x, z$, $\mathds{E}_x(\tau_z )  < \infty$ for $\mu_- >0$ and $\mu_+<0$, and it is infinite if either $\mu_+ =0$ or $\mu_- = 0$. 
\end{proposition}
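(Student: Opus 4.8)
The plan is to read off $\mathds{E}_x(\tau_z)$ from the behaviour at $q=0$ of the Laplace transform supplied by Theorem~\ref{thm2_zhou}. The starting point is the elementary identity, valid for all $x,z$ with values in $[0,\infty]$,
\[
\mathds{E}_x(\tau_z)=\lim_{q\to 0^+}\frac{1-\mathds{E}_x\!\left[e^{-q\tau_z}\right]}{q},
\]
which follows from monotone convergence since $(1-e^{-qt})/q\uparrow t$ as $q\downarrow0$ for each $t\in[0,\infty]$ (with $e^{-q\infty}:=0$). Thus $\mathds{E}_x(\tau_z)$ is finite exactly when $q\mapsto\mathds{E}_x[e^{-q\tau_z}]$ is right-differentiable at $q=0$, and the whole question reduces to the local behaviour at $q=0$ of the two building blocks $\delta^{\pm}=\sqrt{\mu_{\pm}^2+2q}$.

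The decisive dichotomy is that $\delta^{\pm}$ is analytic at $q=0$ when $\mu_{\pm}\neq0$ (with $\tfrac{d}{dq}\delta^{\pm}\big|_{0}=1/|\mu_{\pm}|$), whereas $\delta^{\pm}=\sqrt{2q}$ when $\mu_{\pm}=0$, which is continuous but has infinite right derivative at $0$. For the confining regime $\mu_->0,\ \mu_+<0$ both drifts are nonzero, so $\delta^{\pm}$, hence all roots $\rho^{\pm}_i$ and both coefficients $c_1(q),c_2(q)$ (whose limiting denominators are nonzero, as $c_1,c_2\to1$ in the preceding proposition), are analytic near $q=0$. In each of the four sign-regions for $(x,z)$ the formulas \eqref{lap1}--\eqref{lap2} are then ratios of analytic functions with nonvanishing denominator at $q=0$; differentiating at $q=0$ produces a finite value of $-\mathds{E}_x(\tau_z)$, so $\mathds{E}_x(\tau_z)<\infty$ for all $x,z$. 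This is positive recurrence, consistent with the stationary density obtained earlier.

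For the degenerate case I would treat $\mu_+=0$ and recover $\mu_-=0$ by the symmetry $x\mapsto-x,\ \mu_\pm\mapsto-\mu_\mp,\ \beta\mapsto-\beta$. Now $\delta^+=\sqrt{2q}$ injects a genuine $\sqrt q$ term, and the subtle point is that it cancels for same-side targets but survives for opposite-side ones. For $x=0,\ z>0$ the symmetry $\rho^+_1=-\rho^+_2=-\sqrt{2q}$ together with $c_2(q)\to\tfrac12$ makes the denominator in \eqref{lap2} even in $\sqrt q$, so only integer powers of $q$ remain and $\mathds{E}_0(\tau_z)<\infty$. For $x=0,\ z<0$, however, the $\sqrt q$ enters through $c_1(q)=1-\tfrac{(1+\beta)}{(1-\beta)}\tfrac{\sqrt{2q}}{2\mu_-}+o(\sqrt q)$ and does not cancel, yielding $\mathds{E}_0[e^{-q\tau_z}]=1-c\sqrt{2q}+o(\sqrt q)$ with $c=c(\beta,\mu_-,z)>0$ (for $\beta=0$ one gets $c=(e^{-2\mu_- z}-1)/2\mu_-$). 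Hence $(1-\mathds{E}_0[e^{-q\tau_z}])/q\sim c\sqrt 2/\sqrt q\to\infty$ and $\mathds{E}_0(\tau_z)=\infty$, so once either drift vanishes the uniform finiteness over all targets breaks down.

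The hard part is precisely this directional bookkeeping in the degenerate case: tracking how the $\sqrt q$ singularity of the driftless side passes through the skew-matching coefficients $c_1,c_2$, and verifying that it cancels for targets lying on the driftless side (by the even-in-$\sqrt q$ symmetry there) while persisting, with strictly positive coefficient for every $\beta\in(-1,1)$, for at least one target on the opposite side. Once a surviving $\sqrt q$ term is exhibited, infiniteness is immediate from the monotone-convergence identity; by contrast the finite case is routine as soon as the analyticity of $\delta^{\pm}$ at $q=0$ is recorded.
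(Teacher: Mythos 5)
Your proof is correct, and it shares the paper's starting point — the closed-form Laplace transforms of Theorem \ref{thm2_zhou} and the identification of $\mathds{E}_x(\tau_z)$ with the behaviour of $\psi(q)=\mathds{E}_x[e^{-q\tau_z}]$ at $q=0$ — but the execution is genuinely different. The paper computes $-\psi'(0)$ explicitly in each of the four sign configurations of $(x,z)$, producing closed formulas with denominators $2\mu_-\mu_+^2(1+\beta)$ and $2\mu_-^2\mu_+(1-\beta)$, reads finiteness in the regime $\mu_->0>\mu_+$ off these expressions, and then disposes of the degenerate case with ``the desired results then follow.'' You instead justify the identity $\mathds{E}_x(\tau_z)=\lim_{q\to0^+}(1-\psi(q))/q$ in $[0,\infty]$ by monotone convergence and get finiteness from a soft analyticity argument ($\delta^\pm$, $\rho_i^\pm$, $c_1$, $c_2$ analytic at $q=0$ with nonvanishing limiting denominators). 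Your route is cleaner and needs no computation, but yields no values; the paper's formulas are more informative when both drifts are nonzero.

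Where your treatment genuinely improves on the paper is the degenerate case. The paper's inference is too quick: vanishing of the denominators $\mu_-\mu_+^2$ and $\mu_-^2\mu_+$ does not by itself give infiniteness, because the numerators vanish to the same order. Indeed, expanding the paper's own formula for $x<0<z$ as $\mu_+\to0^-$ gives the finite limit $-x/\mu_-+z^2+(1-\beta)z/\bigl((1+\beta)\mu_-\bigr)$, which is exactly the finite value your even-in-$\sqrt q$ cancellation produces for targets on the driftless side (and which a scale-function/speed-measure check confirms: with $\mu_+=0$, $\mu_->0$, the speed measure of $(-\infty,y]$ is finite while that of $[y,\infty)$ is infinite, so upward hitting times are finite and downward ones infinite). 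So the proposition's literal universal claim — infinite for \emph{any} $x,z$ once $\mu_+=0$ or $\mu_-=0$ — is false, and your directional bookkeeping, with the surviving $\sqrt q$ term of coefficient $c=(1+\beta)(e^{-2\mu_-z}-1)/\bigl(2(1-\beta)\mu_-\bigr)>0$ for $z<0$, proves the correct repaired statement: once either drift vanishes, hitting times toward the side carrying the nonzero drift become infinite, so finiteness for all $x,z$ breaks down. Your blind proof thus not only establishes the result under its only tenable reading but also flags an overstatement in the paper's formulation and a gap in its own argument for that half.
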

\begin{proof}
    According to Theorem (\ref{thm2_zhou}), if we define  $ \psi(q) = \mathds{E}_x[e^{-q \tau_z}]$, then $\mathds{E}_x[\tau_z] = -\psi^{\prime}(0)$. Given $\mu_->0$ and $\mu_+<0$, when $x<0<z$,
\begin{equation*}
    \mathds{E}_x[\tau_z] = \frac{ 2 (1+ \beta) (\mu_- \mu_+ z - \mu_+^2 x) + \left( \mu_- (1+\beta) - \mu_+ (1-\beta) \right) \left( e^{-2 \mu_+ z} -1 \right) }{2 \mu_{-} \mu_{+}^{2} (1+ \beta)};
\end{equation*}
when $0<x<z$,
\begin{equation*}
    \mathds{E}_x[\tau_z] = \frac{ 2( 1+\beta) \mu_- \mu_+ (z-x) + \left( \mu_- (1+\beta) - \mu_+ (1 -\beta) \right) \left( e^{-2 \mu_+ z} - e^{-2 \mu_+ x} \right) }{2 \mu_{-} \mu_{+}^{2} (1+ \beta)};
    \end{equation*}
when $z \leq 0 \leq x$,
\begin{equation*}
    \mathds{E}_x[\tau_z] = \frac{ 2(1-\beta) (\mu_- \mu_+ z -  \mu_-^2 x) + \left( \mu_- (1+\beta)  - \mu_+  (1-\beta) \right) \left( 1- e^{-2 \mu_- z} \right) }{2 \mu_{-}^2 \mu_{+} (1- \beta)}
    \end{equation*}
and when $z \leq x \leq 0$, 
\begin{equation*}
    \mathds{E}_x[\tau_z] = \frac{ 2 \mu_- \mu_+ (1- \beta) (z-x) + \left( \mu_- (1+\beta) - \mu_+ (1-\beta) \right) \left( e^{-2 \mu_- x} - e^{-2 \mu_- z} \right) }{2 \mu_{-}^2 \mu_{+} (1- \beta)}.
\end{equation*}
The desired results then follow.
%As it can be seen $\mathds{E}_x[\tau_z]$ is finite, and if either $\mu_+ =0$ or $\mu_- = 0$, it is unbounded. 
\end{proof}

\section{Approximation and Simulation} \label{sec:approx_simu}
As mentioned in the introduction, skew Brownian motion has many applications across various fields. A key aspect of these applications is the generation of samples from the distribution, which can be utilized in Monte Carlo simulations. However, due to the complexity of the stochastic differential equation (SDE) governing SBM and the intricate nature of its cumulative distribution function (CDF), traditional methods such as the Euler scheme or transformation methods are challenging to implement.

Although SBM does not have the independent increments because it is biased at the skew level, we can approximate its CDF on either side of the skew level; here is zero. In this section, we introduce two methods to generate quasi-random samples of skew Brownian motion. The first method approximates the CDF of SBM using a truncated normal distribution and then generates random samples based on the inverse of this approximated CDF. The second method employs a deep neural network to approximate the inverse of the CDF of SBM, subsequently generating random samples using the trained network. 

\subsection{Using Truncated Normal Distribution}
A truncated normal distribution is a modification of the normal (Gaussian) distribution where values below a certain lower bound and/or above a certain upper bound are excluded, or "truncated." This results in a distribution that is confined to a specified range. The truncation can occur on either one side (left or right) or on both sides of the distribution. Suppose $X$ is a normal random variable with mean $\mu$ and variance $\sigma^2$, with probability density function $f(x)$ and cumulative distribution function $F(x)$. Then the PDF, $k(x)$, and CDF, $K(x)$, of the normal distribution truncated in $[a, b]$ is given by:
\[
k(x) = \frac{f(x)}{F(b) - F(a)}, \hspace{1cm} K(x) = \frac{F(x) - F(a)}{F(b) - F(a)}.
\]

Suppose $X = (X_t)_{t\geq 0}$ is the refracted skew Brownian motion, with skew level $a=0$ and starting point $x=0$ with the CDF given in Proposition (\ref{prop:cdf}). Given that the density function of $X$ on either side of the skew level closely resembles a normal distribution, we assume that $X$ is a truncated Brownian motion. So we approximate the CDF of RSBM with the following mixture of truncated normal distributions:
\begin{equation*}
    H(x) =
    \begin{cases}
        \alpha K_1(x), & x <0, \\
        \alpha + (1- \alpha) K_2(x),  & x>0,
    \end{cases}
\end{equation*}
where $K_1(x) = \frac{F(x)}{F(0)}$ is the CDF of a truncated normal variable in support $(-\infty, 0)$, and $K_2(x) = \frac{G(x) - G(0)}{1-G(0)}$ is the CDF of a truncated normal variable in support $(0,\infty)$.  Here, $F$ is a normal CDF with mean $\mu_1$, variance $\sigma_1^2$ and $G$ is the normal CDF with mean $\mu_2$, variance $\sigma_2^2$, and $\alpha$ is a constant in $[0,1]$.

Given the parameters $\alpha, \mu_1, \sigma_1^2, \mu_2, \sigma_2^2$ of $H$, we can generate random samples of the RSBM by transforming uniform random variables using the inverse CDF method. For a uniform random variable $u$, the quasi-random sample RSBM is given by:
\begin{equation} \label{eq:tna_sample}
    \hat{X} =
    \begin{cases}
        F^{-1} \left( \frac{u}{\alpha} F(0) \right), & u \leq \alpha, \\
        G^{-1}\left( \frac{u-\alpha}{1-\alpha} \Big(1 - G(0) \Big) + G(0) \right),  & u > \alpha.
    \end{cases}
\end{equation}
The remaining task is to determine the parameters of $H$. To achieve an accurate approximation, we use the Sequential Least Squares Programming (SLSQP) method \cite{kraft1988software} in the SciPy Python package. This optimization technique is employed to minimize the quantile of absolute errors between the RSBM CDF and $H$. Algorithm (\ref{alg:sbm_tna}) outlines the necessary steps to generate random samples of RSBM using a mixture truncated normal distribution.

\begin{algorithm}  
  \caption{RSBM Quasi-Random Sample Generation Using Truncated Normal Distribution}  
  \label{alg:sbm_tna} 
\begin{algorithmic}
    \Require $\mathbf{x}$: Vector of equispaced or non-equispaced values in the interval $[-A, A]$ with length $N$, \\ $params$: Skew Brownian Motion parameters 
    \Ensure $\hat{X}$: Random sample of RSBM
    \State Calculate $\mathds{P}(X \leq x_i)$ for each $i \in \{1,\dots, N\}$ based on Proposition (\ref{prop:cdf}),
    \State Estimate parameters $\mu_1, \sigma_1, \mu_2, \sigma_2, \alpha$ of the mixture truncated normal distribution by:
    $$\min_i Q_{0.99}\left(|\mathds{P}(X \leq x_i) - H(x_i)|\right),$$
    \State Generate uniform random variables $u$,
    \State Generate RSBM random samples $\hat{X}$ using the approximated mixture truncated normal distribution as per equation (\ref{eq:tna_sample}).
\end{algorithmic}
\end{algorithm}

\subsection{Using Deep Neural Network}
A deep neural network (DNN) is a type of artificial neural network (ANN) with multiple layers between the input and output layers. These additional layers, often referred to as hidden layers, enable the network to model complex, hierarchical patterns in data. DNNs are a core component of deep learning, a subset of machine learning, and have many applications in computer vision, finance, games, etc. Researchers have shared an immense interest in applying neural networks to generate quasi-random samples from a distribution. For instance, Horger et al. in \cite{horger2018deep} introduced a neural network model capable of transforming samples from a uniform distribution into samples of any specified probability density function. Hofert et al. in \cite{hofert2021quasi} presented a new approach for generating quasi-random samples from multivariate distributions with any underlying copula using generative neural networks. Furthermore, Gibson et al. in \cite{gibson2023flow} proposed a method that employs Normalizing Flow generative models combined with Importance Sampling to simulate samples from rare event distributions and achieve accurate estimates. Here, we use DNN to approximate the inverse of CDF of an RSBM, then generate quasi-random samples using the trained network. Since deep neural networks (DNNs) are not the main focus of this paper, we will not delve into their details. Instead, we will briefly explain our algorithm and leave the rest to readers to explore further as needed.

Algorithm (\ref{alg:sbm_dnn}) explains the steps for training the neural network and generating random sample variables of RSBM. We use one node in both the input and output layers and employ four hidden layers with 512, 1025, 512, and 1024 nodes, respectively. Additionally, we utilize the Adam optimization algorithm. The number of epochs and iterations are 10,000 and 250 respectively.

\begin{algorithm}  
  \caption{RSBM Quasi-Random Sample Generation Using DNN}  
  \label{alg:sbm_dnn} 
\begin{algorithmic}
    \Require  $z$: Equispaced or non-equispaced values in the interval $[-A, A]$, \\$params$: Skew Brownian Motion parameters 
    \Ensure  $\hat{Z}$: Random sample of RSBM
\State Calculate $F(z)= \mathds{P}(Z\leq z)$ based on Proposition (\ref{prop:cdf}).
\For {epochs}
    \For {Iterations}
        \State Get a chunk of $F(z)$
        \State Calculate the $\hat{z}=\hat{F}^{-1}(F(z))$ using the neural network 
        \State Calculate the objective function (Means Absolute Error = $\sum_i |z_i - \hat{z}_i|$)
        \State Update the neural network weights using back-propagation and auto-gradient
    \EndFor
\EndFor
\State Generate uniform random variables $u$
\State Generate RSBM random samples $\hat{Z}$ using trained neural network and $u$ as its input.
\end{algorithmic}
\end{algorithm}

\subsection{Simulation Results}
In this section, we generate random samples of RSBM with different parameters using the two above-mentioned methods. In order to test the goodness-of-fit of generated random models, we employ a one-sample Kolmogorov–Smirnov (K-S) test. KS test is a nonparametric test to determine whether a sample comes from a specified continuous distribution. The Kolmogorov–Smirnov statistic quantifies the distance between the empirical distribution function of the sample and the cumulative distribution function of the reference distribution. The null distribution of this statistic is calculated under the null hypothesis that the sample is drawn from the reference distribution. Since the RSBM CDF is a right-continuous distribution with a single jump at the skew level, we can use a one-sample K-S test to determine whether the generated quasi-random samples follow the same distribution as the RSBM.

We consider four models for RSBM with different parameters (Table \ref{tab:sbm_models}) and utilize the approximation methods to generate random samples. In addition, we consider $A = 20$ in both Algorithms (\ref{alg:sbm_tna} and \ref{alg:sbm_dnn}). 
\begin{table}[h]
    \centering
    \begin{tabular}{l|c c c c}
         Models & $\mu_-$ & $\mu_+$ &$t$& $\beta$  \\
         \hline
         Model 1 & -0.1 & 0.1 & 2 & 0.3 \\
         Model 2 & 2 & -4 & 2 & 0.7 \\
         Model 3 & 1 & 3 & 1 & -0.1 \\
         Model 4 & -2 & -3 & 3 &  0.9
    \end{tabular}
    \caption{RSBM models with different parameters}
    \label{tab:sbm_models}
\end{table}

Figures (\ref{fig:NN_cdf_pdf}) and (\ref{fig:TN_cdf_pdf}) present the random samples of RSBM generated using Deep Neural Network (DNN) and Truncated Normal Approximations (TNA), respectively. The first row of each figure compares the histogram of generated quasi-random samples with the theoretical PDF of RSBM, while the second row illustrates the empirical CDF of the generated samples against the theoretical CDF of the process. Note that the theoretical PDF and CDF are approximated using the Quadrature Adaptive General Segment (QAGS) algorithm as proposed in \cite{piessens2012quadpack}, which is available in the SciPy package in Python.
 
It can be observed that the generated samples are compatible with the theoretical distribution. Furthermore, the Kolmogorov-Smirnov (K-S) test indicates that the generated samples align well with the RSBM with a 95\% confidence level. Table (\ref{tab:kstest}) displays the results of the K-S test for both approximation methods across different models, showing the effectiveness of each method in accurately generating samples that reflect the characteristics of the theoretical RSBM distribution.

\begin{table}[h]
\centering
\begin{tabular}{|c|c|c|c|}
\hline
Models & Method & K-S stats & p\_value \\ \hline
\multirow{2}{*}{Model 1} & DNN & 0.0143 & 0.2559 \\ \cline{2-4} 
        & TNA & 0.0088 & 0.8305 \\ \hline
\multirow{2}{*}{Model 2} & DNN & 0.0116 & 0.5105 \\ \cline{2-4} 
        & TNA & 0.0123 & 0.4277 \\ \hline
\multirow{2}{*}{Model 3} & DNN & 0.0106 & 0.6226 \\ \cline{2-4} 
        & TNA & 0.0148 & 0.2181 \\ \hline
\multirow{2}{*}{Model 4} & DNN & 0.0096 & 0.7366 \\ \cline{2-4} 
        & TNA & 0.0138 & 0.2874 \\ \hline
\end{tabular}
\caption{K-S Test Results for DNN and TNA Methods Across Different Models}
\label{tab:kstest}
\end{table}

\begin{figure}[h] 
    \centering
    \begin{subfigure}[b]{0.24\textwidth}
        \centering
        \includegraphics[width=\textwidth]{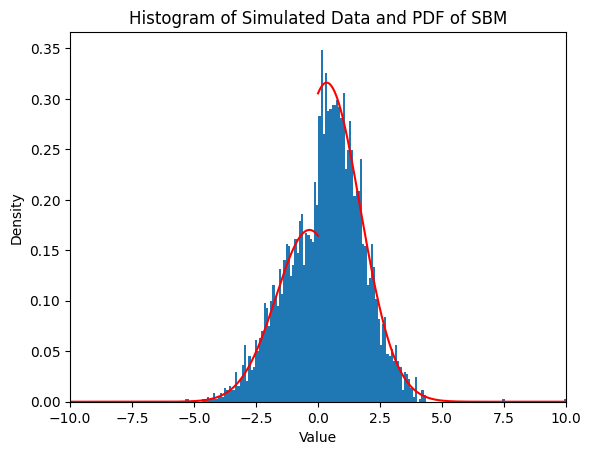}
        % \caption{Model 1}
    \end{subfigure}
    \hfill
    \begin{subfigure}[b]{0.24\textwidth}
        \centering
        \includegraphics[width=\textwidth]{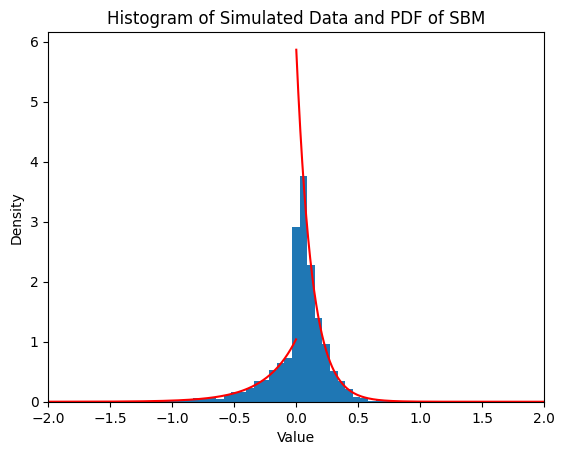}
        % \caption{Model 2}
    \end{subfigure}
    \hfill
    \begin{subfigure}[b]{0.24\textwidth}
        \centering
        \includegraphics[width=\textwidth]{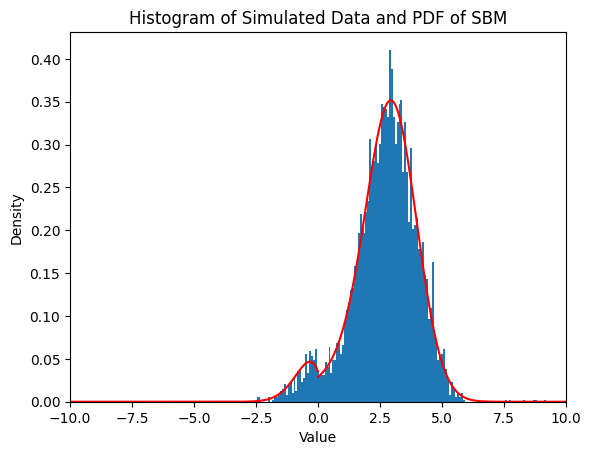}
        % \caption{Model 3}
    \end{subfigure}
    \hfill
    \begin{subfigure}[b]{0.24\textwidth}
        \centering
        \includegraphics[width=\textwidth]{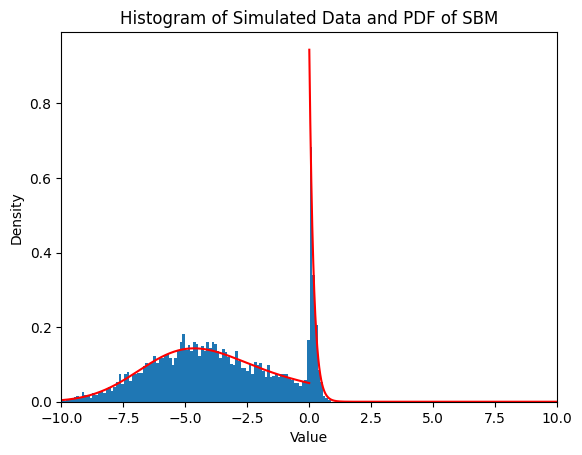}
        % \caption{Model 4}
    \end{subfigure}
    \vskip\baselineskip
    \begin{subfigure}[b]{0.24\textwidth}
        \centering
        \includegraphics[width=\textwidth]{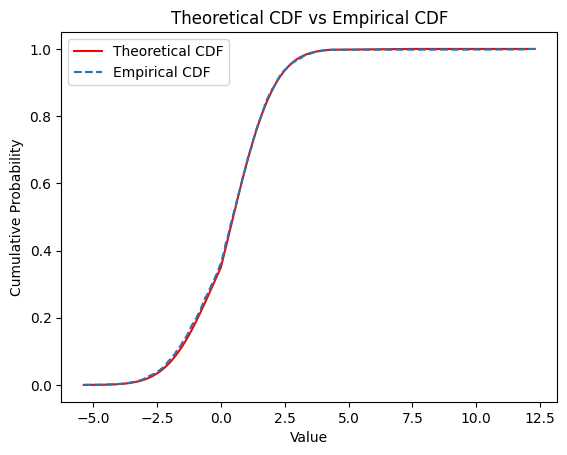}
        \caption{Model 1}
    \end{subfigure}
    \hfill
    \begin{subfigure}[b]{0.24\textwidth}
        \centering
        \includegraphics[width=\textwidth]{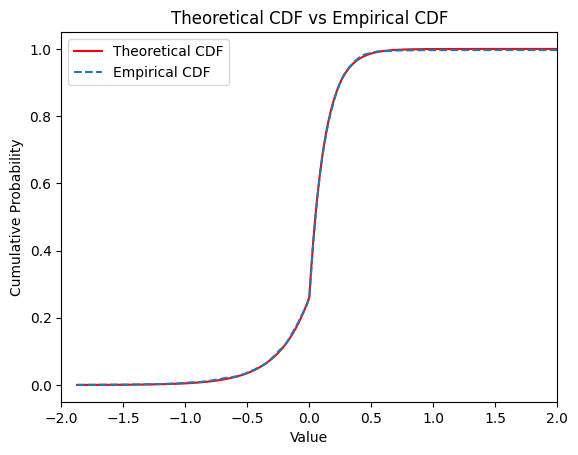}
        \caption{Model 2}
    \end{subfigure}
    \hfill
    \begin{subfigure}[b]{0.24\textwidth}
        \centering
        \includegraphics[width=\textwidth]{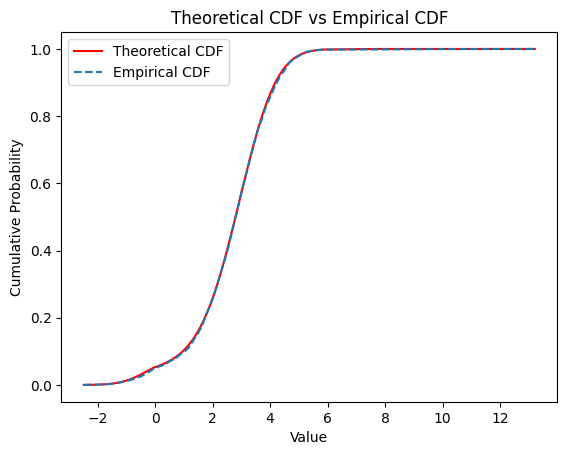}
        \caption{Model 3}
    \end{subfigure}
    \hfill
    \begin{subfigure}[b]{0.24\textwidth}
        \centering
        \includegraphics[width=\textwidth]{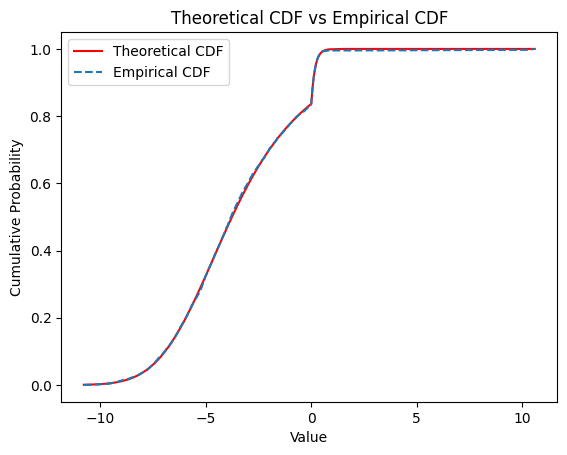}
        \caption{Model 4}
    \end{subfigure}
    \caption{Histogram and CDF of generated random samples for different models with deep neural network against the exact PDF and CDF of the model}
    \label{fig:NN_cdf_pdf}
\end{figure}

\begin{figure}[h] 
    \centering
    \begin{subfigure}[b]{0.24\textwidth}
        \centering
        \includegraphics[width=\textwidth]{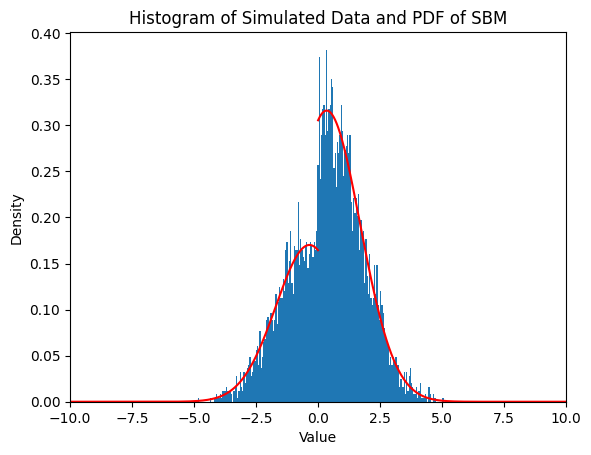}
        % \caption{Model 1}
    \end{subfigure}
    \hfill
    \begin{subfigure}[b]{0.24\textwidth}
        \centering
        \includegraphics[width=\textwidth]{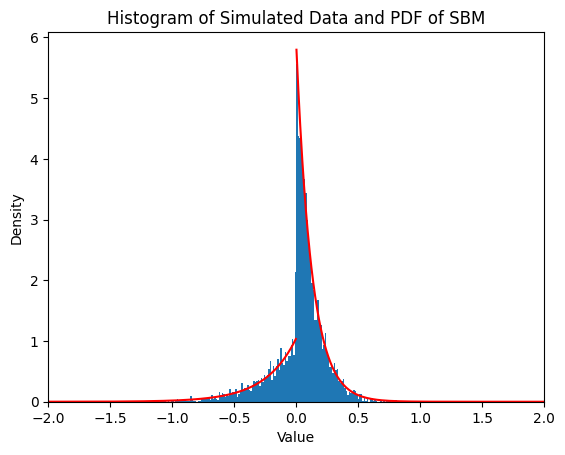}
        % \caption{Model 2}
    \end{subfigure}
    \hfill
    \begin{subfigure}[b]{0.24\textwidth}
        \centering
        \includegraphics[width=\textwidth]{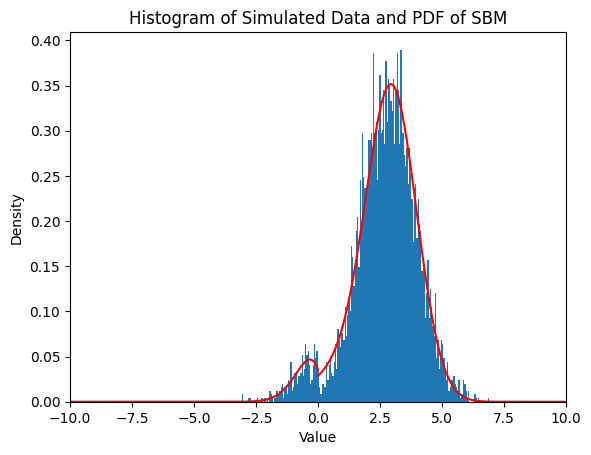}
        % \caption{Model 3}
    \end{subfigure}
    \hfill
    \begin{subfigure}[b]{0.24\textwidth}
        \centering
        \includegraphics[width=\textwidth]{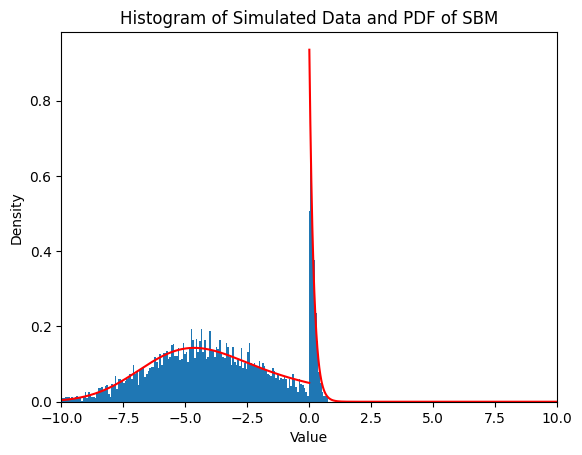}
        % \caption{Model 4}
    \end{subfigure}
    \vskip\baselineskip
    \begin{subfigure}[b]{0.24\textwidth}
        \centering
        \includegraphics[width=\textwidth]{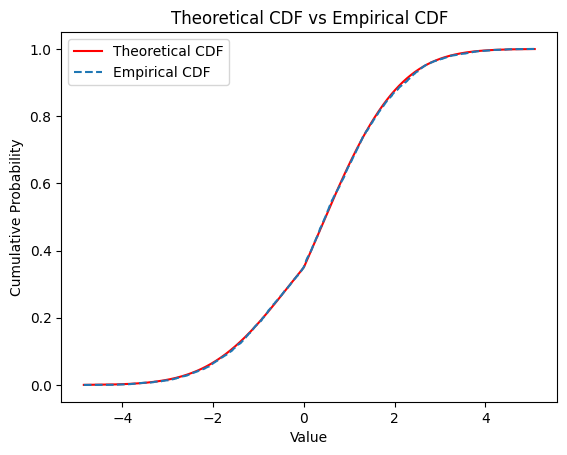}
        \caption{Model 1}
    \end{subfigure}
    \hfill
    \begin{subfigure}[b]{0.24\textwidth}
        \centering
        \includegraphics[width=\textwidth]{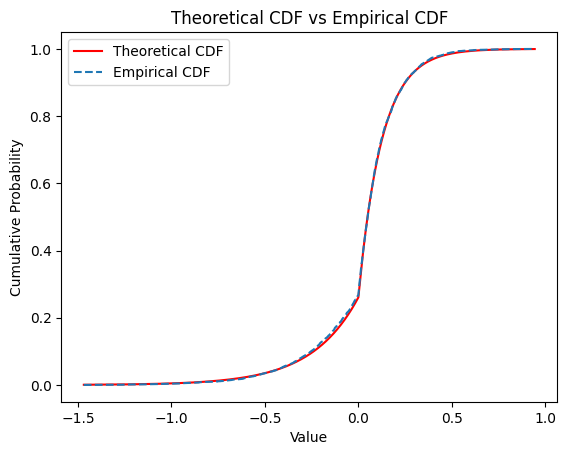}
        \caption{Model 2}
    \end{subfigure}
    \hfill
    \begin{subfigure}[b]{0.24\textwidth}
        \centering
        \includegraphics[width=\textwidth]{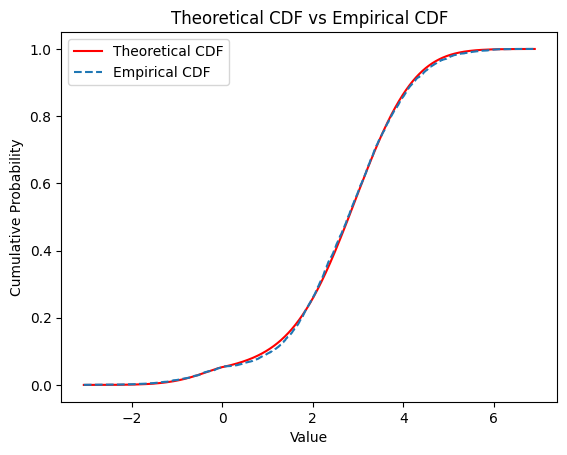}
        \caption{Model 3}
    \end{subfigure}
    \hfill
    \begin{subfigure}[b]{0.24\textwidth}
        \centering
        \includegraphics[width=\textwidth]{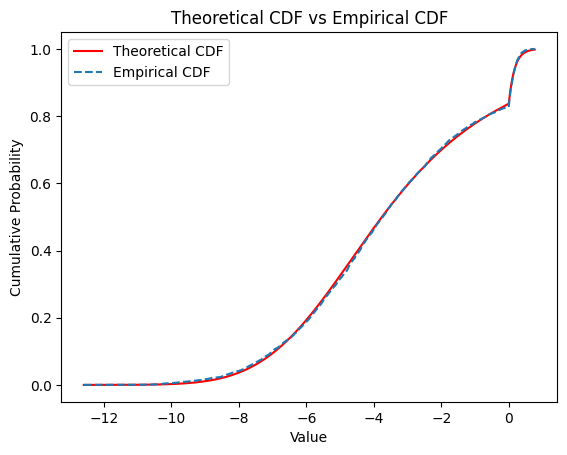}
        \caption{Model 4}
    \end{subfigure}
    \caption{Histogram and CDF of generated random samples for different models with truncated Normal approximation against the exact PDF and CDF of the model} \label{fig:TN_cdf_pdf}
\end{figure}

\subsection{Application in Risk Measurement}
The RSBM provides a versatile and powerful tool for modeling various asymmetric behaviors in financial markets. By incorporating skewness, it allows for more accurate pricing, risk management, and strategic decision-making, reflecting the true nature of financial assets and market dynamics. In markets, assets often have returns that are not symmetrically distributed. RSBM can better capture the empirical characteristics of asset returns, providing more realistic pricing and risk management models \cite{bai2019empirical}. In addition, Investors with preferences for or against skewness in returns can benefit from portfolio optimization models incorporating skew Brownian motion. This allows for more tailored portfolio construction, balancing risk and return according to the investor’s skewness preferences. In the risk assessment aspect, conventional risk measures such as Value-at-Risk (VaR) and Conditional Value-at-Risk (CVaR) can be adapted to account for skewness in the underlying return distributions, providing more accurate assessments of downside risk.

Calculating risk measures like VaR and  CVaR based on the theoretical CDF of RSBM is challenging. Therefore, we use the truncated normal approximation (described in the previous section) to evaluate these risk measures based on the estimated parameters. Let $L$ be the loss in a portfolio that follows an RSBM, for a confidence level $q$, $\mathrm{VaR}_q(L)$ defined as:
\[
\mathds{P}(L \geq \mathrm{VaR}_q(L)) = 1 - q.
\]
One can easily calculate the VaR based on estimated parameters of mixed truncated normal approximation with:
\begin{equation} \label{eq:var}
    \mathrm{VaR}_q(L) =
    \begin{cases}
        \mu_1 + \sigma_1 \Phi^{-1} \left( \frac{q}{\alpha} \Phi(\frac{-\mu_1}{\sigma_1}) \right), & q \leq \alpha, \\
        \mu_2 + \sigma_2 \Phi^{-1}\left( \frac{q-\alpha}{1-\alpha}  + \frac{1-q}{1-\alpha} \Phi(\frac{-\mu_2}{\sigma_2})  \right),  & q > \alpha,
    \end{cases}
\end{equation}

Since the VaR of loss values is a positive value, the CVaR can be calculated as follows:
\begin{equation} \label{eq:cvar}
    \begin{split}
        \mathrm{CVaR}_q(L) = \mathds{E}[L|L\geq \mathrm{VaR}_q(L)] &= \frac{1}{1-q} \int_{\mathrm{VaR}_q(L)}^{\infty} x h(x) \mathrm{d} x  \\
        &= \frac{1}{1-q} \int_{\mathrm{VaR}_q(L)}^{\infty} \frac{1-\alpha}{1-G(0)} x g(x) \mathrm{d} x \\
        &= \frac{1-\alpha}{(1-\Phi(\frac{-\mu_2}{\sigma_2}))(1-q)} \int_{\mathrm{VaR}_q(L)}^{\infty} x g(x) \mathrm{d} x \\
        &= \mu_2 + \frac{\sigma_2(1-\alpha)}{(1-\Phi(\frac{-\mu_2}{\sigma_2}))(1-q)} \phi \left( \Phi^{-1}\left( \frac{q-\alpha}{1-\alpha}  + \frac{1-q}{1-\alpha} \Phi(\frac{-\mu_2}{\sigma_2}) \right) \right).
    \end{split}
\end{equation}
We utilize Model 2 in Table (\ref{tab:sbm_models}) to evaluate the accuracy of the approximated VaR and CVaR. First, we compute the CDF of the RSBM using 2500 data points from the interval $[-20, 20]$. We next calculate the VaR using the linear interpolation method on the CDF and data points. We then use the estimated parameters of the mixed truncated distribution to approximate the VaR and CVaR. Finally, we run 100,000 quasi-random samples through the historical Monte Carlo simulation to calculate the VaR and CVaR for different confidence levels greater than $\alpha = 0.2569$ which acts like a border to have positive VaR. Note that we do not report negative VaR and CVaR because negative loss is not considered a risk.

Figure (\ref{fig:var_cvar}) presents the graph of estimated VaR and CVaR using the formulas in (\ref{eq:var}) and (\ref{eq:cvar}), as well as the historical Monte Carlo simulation. This plot compares the precision of the approximated value of risk to the semi-exact values, which are displayed in black. The mean square of errors of approximated VaR and Monte-Carlo VaR are 0.000019 and 0.000025 respectively. 
\begin{figure}
    \centering
    \includegraphics[width=0.5\linewidth]{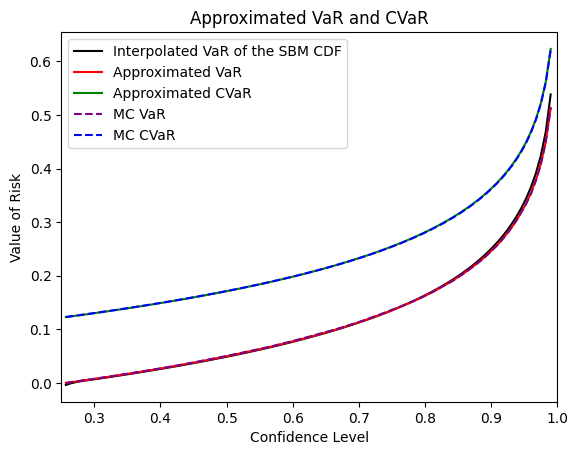}
    \caption{Evaluating approximated VaR and CVaR using mixed truncated normal distribution and Monte-Carlo (MC) simulation methods for values greater than $\alpha = 0.2569$.}
    \label{fig:var_cvar}
\end{figure}

\newpage
%\begin{proposition}
%    If $X_t$ follows a skew Brownian motion dynamic, in which the start point and skew level are $0$, the jump size of discontinuity in $0$ is given by:
%    \begin{equation*}
 %       4 \beta \int_{0}^{\infty} \int_{0}^{t} h(t- \tau; (1+\beta) b , \mu_+) h(\tau; (1-\beta)b, -\mu_-) \mathrm{d}\tau \mathrm{d}b
%    \end{equation*}
%\end{proposition}
%{\color{blue}Move it to Section IV. Discuss what affects the jump size. What happens if $t\to\infty$?}    

%{\color{blue} The references are not alphabetically listed!}

\bibliographystyle{dinat}
%argument is your BibTeX string definitions and bibliography database(s)
\bibliography{references}

\newpage

\appendix
\section{Proof of Corollary \ref{corr:sbm_one_drift} } \label{app:proof_one_drift}
%The proof of corollary (\ref{corr:sbm_one_drift} 
For  $\mu_- = \mu_+ = \mu$ and $y > 0$,
\begin{eqnarray*}
    & &\mathds{P}(X_t \in \mathrm{d}y) \\
    &=& 2 (1+ \beta)  e^{2 \mu y} \int_{0}^{\infty} \int_{0}^{t} h(t-\tau; (1+\beta)b + y, \mu) h(\tau; (1-\beta)b,-\mu) \mathrm{d}\tau \mathrm{d}b \mathrm{d}y \\
    &=& 2 (1+ \beta)  e^{2 \mu y} \int_{0}^{\infty} \int_{0}^{t} h(t-\tau; (1+\beta)b + y, \mu) h(\tau; (1-\beta)b,\mu) e^{2\mu (1-\beta)b} \mathrm{d}\tau \mathrm{d}b \mathrm{d}y \\
    % &=& 2 (1+ \beta)  e^{2 \mu y} \int_{0}^{\infty} e^{2\mu (1-\beta)b} h(t-\tau; (1+\beta)b + y + (1-\beta)b, \mu)  \mathrm{d}b \mathrm{d}y \\
    &=& 2 (1+ \beta)  e^{2 \mu y} \int_{0}^{\infty} e^{2\mu (1-\beta)b} h(t; y + 2b, \mu)  \mathrm{d}b \mathrm{d}y \\
    &=& \frac{2 (1+ \beta)}{\sqrt{2 \pi t^3}}  e^{2 \mu y} \int_{0}^{\infty} e^{2\mu (1-\beta)b} (y+2b) e^{\frac{-(y+2b+\mu t)^2}{2t}}  \mathrm{d}b \mathrm{d}y \\
    % &=& \frac{2 (1+ \beta)}{\sqrt{2 \pi t^3}}  e^{2 \mu y} \int_{0}^{\infty} (y+2b) e^{\frac{-y^2 - (2b)^2 - \mu^2 t^2 - 2(2b)y - 2y \mu t - 2(2b) \mu t + 4(1-\beta)b \mu t}{2t}}  \mathrm{d}b \mathrm{d}y\\
    % &=& \frac{2 (1+ \beta)}{\sqrt{2 \pi t^3}}  e^{2 \mu y} e^{\frac{-(y+\mu t)^2}{2t}} \int_{0}^{\infty} (y+2b) e^{\frac{- (2b)^2  - 2(2b)y  - 2(2b)  \beta \mu t - (y+\beta \mu t)^2 + (y+\beta \mu t)^2}{2t}} \mathrm{d}b \mathrm{d}y \\
    % &=& \frac{2 (1+ \beta)}{\sqrt{2 \pi t^3}}  e^{2 \mu y} e^{\frac{-(y+\mu t)^2}{2t}} \int_{0}^{\infty} (y+2b) e^{\frac{-(2b+y + \beta \mu t )^2}{2t}} e^{\frac{(y+\beta \mu t)^2}{2t}} \mathrm{d}b \mathrm{d}y \\
    &=& \frac{2 (1+ \beta)}{\sqrt{2 \pi t^3}}  e^{2 \mu y} e^{\frac{-(y+\mu t)^2}{2t}} e^{\frac{(y+\beta \mu t)^2}{2t}}\int_{0}^{\infty} (y+2b) e^{\frac{-(2b+y + \beta \mu t )^2}{2t}}  \mathrm{d}b \mathrm{d}y \\
    &=& \frac{2 (1+ \beta)}{\sqrt{2 \pi t^3}}  e^{\frac{-(y-\mu t)^2}{2t}} e^{\frac{(y+\beta \mu t)^2}{2t}} \Big[ \int_{0}^{\infty} (y+2b + \beta \mu t) e^{\frac{-(2b+y + \beta \mu t )^2}{2t}}  \mathrm{d}b - \beta \mu t \int_{0}^{\infty}  e^{\frac{-(2b+y + \beta \mu t )^2}{2t}}  \mathrm{d}b \Big] \mathrm{d}y \\ 
    &=& \frac{ (1+ \beta)}{\sqrt{2 \pi t^3}}  e^{\frac{-(y-\mu t)^2}{2t}} e^{\frac{(y+\beta \mu t)^2}{2t}} \Big[ \int_{y}^{\infty} (v+ \beta \mu t) e^{\frac{-(v + \beta \mu t )^2}{2t}}  \mathrm{d}v - \beta \mu t \int_{y}^{\infty}  e^{\frac{-(v + \beta \mu t )^2}{2t}}  \mathrm{d}v\Big] \mathrm{d}y \\
    % &=& \frac{ (1+ \beta)}{\sqrt{2 \pi t^3}}  e^{\frac{-(y-\mu t)^2}{2t}} e^{\frac{(y+\beta \mu t)^2}{2t}} \Big[ t e^{\frac{-(y+\beta \mu t)^2}{2t}} - \beta \mu t \sqrt{2t} \frac{\sqrt{\pi}}{2}  \mathrm{erfc}(\frac{y+\beta \mu t}{\sqrt{2t}}) \Big] \mathrm{d}y \\
    &=& \frac{ (1+ \beta)}{\sqrt{2 \pi t}}  e^{\frac{-(y-\mu t)^2}{2t}} e^{\frac{(y+\beta \mu t)^2}{2t}} \Big[e^{\frac{-(y+\beta \mu t)^2}{2t}} - \frac{\beta \mu}{2} \sqrt{2\pi t}  
  \mathrm{erfc}(\frac{y+\beta \mu t}{\sqrt{2t}}) \Big] \mathrm{d}y \\
 &=& \frac{ (1+ \beta)}{\sqrt{2 \pi t}}   e^{\frac{-(y-\mu t)^2}{2t}}  \Big[1 - \frac{\beta \mu}{2} \sqrt{2\pi t} e^{\frac{(y+\beta \mu t)^2}{2t}} 
  \mathrm{erfc}(\frac{y+\beta \mu t}{\sqrt{2t}}) \Big] \mathrm{d}y. 
\end{eqnarray*}

%\newpage
For  $\mu_- = \mu_+ = \mu$ and $y < 0$,
\begin{eqnarray*}
    & &\mathds{P}(X_t \in \mathrm{d}y) \\
    &=& 2 (1- \beta)  e^{2 \mu y} \int_{0}^{\infty} \int_{0}^{t} h(t-\tau; (1+\beta)b , \mu) h(\tau; (1-\beta)b - y,-\mu) \mathrm{d}\tau \mathrm{d}b \mathrm{d}y\\
    &=& 2 (1- \beta)  e^{2 \mu y} \int_{0}^{\infty} \int_{0}^{t} h(t-\tau; (1+\beta)b , -\mu) h(\tau; (1-\beta)b - y,-\mu) e^{-2\mu (1+\beta)b} \mathrm{d}\tau \mathrm{d}b \mathrm{d}y \\
    % &=& 2 (1- \beta)  e^{2 \mu y} \int_{0}^{\infty} e^{-2\mu (1+\beta)b} h(t-\tau; (1+\beta)b + (1-\beta)b-y, -\mu)  \mathrm{d}b \mathrm{d}y\\
    &=& 2 (1- \beta)  e^{2 \mu y} \int_{0}^{\infty} e^{-2\mu (1+\beta)b} h(t; -y + 2b, -\mu)  \mathrm{d}b \mathrm{d}y\\
    &=& \frac{2 (1- \beta)}{\sqrt{2 \pi t^3}}  e^{2 \mu y} \int_{0}^{\infty} e^{-2\mu (1+\beta)b} (-y+2b) e^{\frac{-(-y+2b-\mu t)^2}{2t}}  \mathrm{d}b \mathrm{d}y \\
    % &=& \frac{2 (1- \beta)}{\sqrt{2 \pi t^3}}  e^{2 \mu y} \int_{0}^{\infty} (-y+2b) e^{\frac{-y^2 - (2b)^2 - \mu^2 t^2 + 2(2b)y - 2y \mu t + 2(2b) \mu t - 4(1+\beta)b \mu t}{2t}}  \mathrm{d}b \mathrm{d}y\\
    % &=& \frac{2 (1- \beta)}{\sqrt{2 \pi t^3}}  e^{2 \mu y} e^{\frac{-(y+\mu t)^2}{2t}} \int_{0}^{\infty} (-y+2b) e^{\frac{- (2b)^2  + 2(2b)y  - 2(2b)  \beta \mu t - (-y+\beta \mu t)^2 + (-y+\beta \mu t)^2}{2t}} \mathrm{d}b \mathrm{d}y \\
    % &=& \frac{2 (1- \beta)}{\sqrt{2 \pi t^3}}  e^{2 \mu y} e^{\frac{-(y+\mu t)^2}{2t}} \int_{0}^{\infty} (-y+2b) e^{\frac{-(2b-y + \beta \mu t )^2}{2t}} e^{\frac{(-y+\beta \mu t)^2}{2t}} \mathrm{d}b \mathrm{d}y \\
    &=& \frac{2 (1- \beta)}{\sqrt{2 \pi t^3}}  e^{2 \mu y} e^{\frac{-(y+\mu t)^2}{2t}} e^{\frac{(-y+\beta \mu t)^2}{2t}}\int_{0}^{\infty} (-y+2b) e^{\frac{-(-y +2b+ \beta \mu t )^2}{2t}}  \mathrm{d}b \mathrm{d}y \\
    &=& \frac{2 (1- \beta)}{\sqrt{2 \pi t^3}}  e^{\frac{-(y-\mu t)^2}{2t}} e^{\frac{(-y+\beta \mu t)^2}{2t}} \Big[ \int_{0}^{\infty} (-y+2b + \beta \mu t) e^{\frac{-(-y + 2b + \beta \mu t )^2}{2t}}  \mathrm{d}b - \beta \mu t \int_{0}^{\infty}  e^{\frac{-(-y + 2b  + \beta \mu t )^2}{2t}}  \mathrm{d}b \Big] \mathrm{d}y \\ 
    &=& \frac{ (1- \beta)}{\sqrt{2 \pi t^3}}  e^{\frac{-(y-\mu t)^2}{2t}} e^{\frac{(-y+\beta \mu t)^2}{2t}} \Big[ \int_{-y}^{\infty} (v+ \beta \mu t) e^{\frac{-(v + \beta \mu t )^2}{2t}}  \mathrm{d}v - \beta \mu t \int_{-y}^{\infty}  e^{\frac{-(v + \beta \mu t )^2}{2t}}  \mathrm{d}v\Big] \mathrm{d}y \\
    % &=& \frac{ (1- \beta)}{\sqrt{2 \pi t^3}}  e^{\frac{-(y-\mu t)^2}{2t}} e^{\frac{(-y+\beta \mu t)^2}{2t}} \Big[ t e^{\frac{-(-y+\beta \mu t)^2}{2t}} - \beta \mu t \sqrt{2t} \frac{\sqrt{\pi}}{2}  \mathrm{erfc}(\frac{-y+\beta \mu t}{\sqrt{2t}}) \Big] \mathrm{d}y \\
    &=& \frac{ (1- \beta)}{\sqrt{2 \pi t}}  e^{\frac{-(y-\mu t)^2}{2t}} e^{\frac{(-y+\beta \mu t)^2}{2t}} \Big[e^{\frac{-(-y+\beta \mu t)^2}{2t}} - \frac{\beta \mu}{2} \sqrt{2\pi t}  
  \mathrm{erfc}(\frac{-y+\beta \mu t}{\sqrt{2t}}) \Big] \mathrm{d}y \\
 &=& \frac{ (1- \beta)}{\sqrt{2 \pi t}}   e^{\frac{-(y-\mu t)^2}{2t}}  \Big[1 - \frac{\beta \mu}{2} \sqrt{2\pi t} e^{\frac{(-y+\beta \mu t)^2}{2t}} 
  \mathrm{erfc}(\frac{-y+\beta \mu t}{\sqrt{2t}}) \Big] \mathrm{d}y.
\end{eqnarray*}

%\newpage

\section{Proof of Proposition \ref{prop:sbm_one_drift}} \label{app:proof_pdf_general}
In this part of the appendix, we find the general transition probability in Proposition (\ref{prop:sbm_one_drift}) by inverting the Laplace transform. 

\begin{remark}
    By formula 183 in the table 2.3 in \cite{alexander1998handbook}, and a property of Laplace transform  one can show that 
    \begin{equation} 
        \int_{0}^{\infty} e^{-st} \frac{1}{\sqrt{2 \pi t}} e^{\frac{-(k-\mu t)^2}{2t}-\mu k} \mathrm{d} t = \frac{1}{\sqrt{2s + \mu^2}} e^{-|k|\sqrt{2s+\mu^2}},
    \end{equation}
which is equivalent to formulas 1.0.5 and 1.0.6 in \cite{borodin2015handbook}.\\
In addition, based on formula 181 in the table 2.3 in \cite{alexander1998handbook}, and using  a property of Laplace transform  one can show that for $k \neq 0$,
    \begin{equation} 
        \int_{0}^{\infty} e^{-st} h(t;k,\mu) e^{\mu k} \mathrm{d} t = e^{-|k| \sqrt{2s+\mu^2}},
    \end{equation}
which is  equivalent to formulas 2.0.1 and 2.0.2 in \cite{borodin2015handbook} and equation (\ref{eq:lap_tr_h}).
\end{remark}
Now using the above remark and Corollary \ref{cor:laplace_convolution}, we can calculate the inverse Laplace transform of
$\mathds{P}_x(X_{ e_{q}}\in \mathrm{d}y)$ in Proposition \ref{prop:pmeasure_x}.
\begin{proof}

For $y > 0$ and $x \geq 0$,
\begin{equation*}
    \begin{split}
        p(x,y,t)  &= \mathcal{L}^{-1}\{ \mathds{P}_x(X_{ e_{q}}\in \mathrm{d}y) \}  \\
        &= e^{\mu_+(y-x)} \Big[ \phi(y-x;\mu_+) e^{-\mu_+(y-x)} - \phi(x+y;\mu_+) e^{-\mu_+ (x+y)} \Big] + 2(1+\beta) e^{2\mu_+ y} f^{+}(t;y) * h(t;x,\mu_+)  \\
        &= \phi(y-x;\mu_+) - e^{-2\mu_+ x} \phi(x+y;\mu_+)  + 2(1+\beta) e^{2\mu_+ y}f^{+}(t;y) * h(t;x,\mu_+).
    \end{split}
\end{equation*}
For the last term:
\begin{eqnarray*}
    &&2(1+\beta) e^{2\mu_+ y}\int_{0}^{t} f^{+}(\eta:y) h(t-\eta;x,\mu_+) \mathrm{d} \eta \\
    &=& 2(1+\beta) e^{2\mu_+ y} \int_{0}^{\infty} \int_{0}^{t}  \int_{0}^{\eta} h(\eta - \tau; (1+\beta)b + y, \mu_+) h(\tau; (1-\beta)b, -\mu_-) h(t-\eta;x,\mu_+) \mathrm{d} \tau   \mathrm{d} \eta \mathrm{d} b \\
    &=& 2(1+\beta) e^{2\mu_+ y} \int_{0}^{\infty} \int_{0}^{t} h(\tau; (1-\beta)b, -\mu_-) \int_{\tau}^{t} h(\eta - \tau; (1+\beta)b + y, \mu_+)  h(t-\eta;x,\mu_+) \mathrm{d} \eta \mathrm{d} \tau  \mathrm{d} b \\
    &=& 2(1+\beta) e^{2\mu_+ y} \int_{0}^{\infty} \int_{0}^{t} h(\tau; (1-\beta)b, -\mu_-) \int_{0}^{t-\tau} h(t-\tau -u; (1+\beta)b + y, \mu_+)  h(u;x,\mu_+) \mathrm{d} u \mathrm{d} \tau  \mathrm{d} b \\
    &=& 2(1+\beta) e^{2\mu_+ y} \int_{0}^{\infty} \int_{0}^{t} h(t-\tau; (1+\beta)b + x + y, \mu_+) h(\tau; (1-\beta)b, -\mu_-)  \mathrm{d} \tau  \mathrm{d} b,
\end{eqnarray*}
 where we change the order of $\tau$ and $\eta$ in the second equality and use changing of variables $u=t- \eta$, in the third equality. 

So,
\begin{equation*}
    p(x,y,t) = \phi(y-x;\mu_+) - e^{-2\mu_+ x} \phi(x+y;\mu_+) + 2(1+\beta) e^{2\mu_+ y} \int_{0}^{\infty} \int_{0}^{t} h(t-\tau; (1+\beta)b + x + y, \mu_+) h(\tau; (1-\beta)b, -\mu_-)  \mathrm{d} \tau  \mathrm{d} b. 
\end{equation*}

For $y > 0$ and $x\leq 0$,
\begin{equation*}
    \begin{split}
        p(x,y,t)  &= \mathcal{L}^{-1}\{ \mathds{P}_x(X_{ e_{q}}\in \mathrm{d}y) \} \\
        &= 2(1+\beta) e^{2\mu_+ y} f^{+}(t;y) * h(t;-x,\mu_-) e^{-2\mu_- x}  \\
        &= 2(1+\beta) e^{2\mu_+ y} \int_{0}^{t} f^{+}(\eta;y) h(t-\eta;-x,-\mu_-) \mathrm{d} \eta \\
        &= 2(1+\beta) e^{2\mu_+ y} \int_{0}^{\infty} \int_{0}^{t}  \int_{0}^{\eta} h(\tau; (1+\beta)b+y, \mu_+) h(\eta-\tau; (1-\beta)b, -\mu_-) h(t-\eta;-x,-\mu_-) \mathrm{d} \tau \mathrm{d} \eta \mathrm{d} b \\
        &= 2(1+\beta) e^{2\mu_+ y} \int_{0}^{\infty} \int_{0}^{t} h(\tau; (1+\beta)b+y, \mu_+)  \int_{\tau}^{t} h(\eta-\tau; (1-\beta)b, -\mu_-) h(t-\eta;-x,-\mu_-) \mathrm{d} \eta \mathrm{d} \tau \mathrm{d} b \\
        &= 2(1+\beta) e^{2\mu_+ y} \int_{0}^{\infty} \int_{0}^{t} h(\tau; (1+\beta)b+y, \mu_+)  \int_{0}^{t-\tau} h(t-\tau -u; (1-\beta)b, -\mu_-) h(u;-x,-\mu_-) \mathrm{d} u \mathrm{d} \tau \mathrm{d} b  \\
        &= 2(1+\beta) e^{2\mu_+ y}  \int_{0}^{\infty} \int_{0}^{t} h(\tau; (1+\beta)b+y, \mu_+)  h(t-\tau; (1-\beta)b -x, -\mu_-) \mathrm{d} \tau \mathrm{d} b,
    \end{split}
\end{equation*}   
 where we change the order of $\tau$ and $\eta$ in the fifth equality and use the change of variable $u=t- \eta$ in the sixth equality.
 
For $y < 0$ and $x \geq 0$,
\begin{equation*}
    \begin{split}
        p(x,y,t)  &= \mathcal{L}^{-1}\{ \mathds{P}_x(X_{ e_{q}}\in \mathrm{d}y) \} \\
        &= 2(1-\beta) e^{2\mu_- y} f^{-}(t;y) * h(t;x,\mu_+)  \\
        &= 2(1-\beta) e^{2\mu_- y} \int_{0}^{t} f^{-}(\eta;y) h(t-\eta;x,\mu_+) \mathrm{d} \eta \\
        &= 2(1-\beta) e^{2\mu_- y} \int_{0}^{\infty} \int_{0}^{t}  \int_{0}^{\eta} h(\eta - \tau; (1+\beta)b, \mu_+) h(\tau; (1-\beta)b-y, -\mu_-) h(t-\eta;x,\mu_+) \mathrm{d} \tau \mathrm{d} \eta \mathrm{d} b \\
        &= 2(1-\beta) e^{2\mu_- y} \int_{0}^{\infty} \int_{0}^{t} h(\tau; (1-\beta)b-y, -\mu_-)  \int_{\tau}^{t} h(\eta - \tau; (1+\beta)b, \mu_+)  h(t-\eta;x,\mu_+) \mathrm{d} \eta \mathrm{d} \tau \mathrm{d} b \\
        &= 2(1-\beta) e^{2\mu_- y} \int_{0}^{\infty} \int_{0}^{t} h(\tau; (1-\beta)b-y, -\mu_-)  \int_{0}^{t-\tau} h(t - \tau -u; (1+\beta)b, \mu_+)  h(u;x,\mu_+) \mathrm{d} u \mathrm{d} \tau \mathrm{d} b \\
        &= 2(1-\beta) e^{2\mu_- y} \int_{0}^{\infty} \int_{0}^{t} h(t - \tau; (1+\beta)b+x, \mu_+) h(\tau; (1-\beta)b-y, -\mu_-)   \mathrm{d} \tau \mathrm{d} b,
    \end{split}
\end{equation*}   
where we change the order of integrals in the fifth equality and use changing the variable $u=t- \eta$ in the sixth equality.

For $y < 0$ and $x \leq 0$,
\begin{equation*}
    \begin{split}
        p(x,y,t)  &= \mathcal{L}^{-1}\{ \mathds{P}_x(X_{ e_{q}}\in \mathrm{d}y) \}  \\
        &= e^{\mu_-(y-x)} \Big[ \phi(y-x;\mu_-) e^{-\mu_-(y-x)} - \phi(-x-y;\mu_-) e^{\mu_- (x+y)} \Big] + 2(1-\beta) e^{2 \mu_- y} f^{-}(t;y) * h(t;-x,\mu_-) e^{-2\mu_- x} \\
        &= \phi(y-x;\mu_-) - e^{2\mu_- y} \phi(-x-y;\mu_-)  + 2(1-\beta) e^{2 \mu_- y} f^{-}(t;y) * h(t;-x,-\mu_-).
    \end{split}
\end{equation*}
For the last term:
\begin{eqnarray*}
    && 2(1-\beta) e^{2 \mu_- y} \int_{0}^{t} f^{-}(\eta;y) h(t-\eta;-x,-\mu_-) \mathrm{d} \eta \\
    &=& 2(1-\beta) e^{2 \mu_- y} \int_{0}^{\infty} \int_{0}^{t}  \int_{0}^{\eta} h(\tau; (1+\beta)b, \mu_+) h(\eta - \tau; (1-\beta)b-y, -\mu_-) h(t-\eta;-x,-\mu_-)  \mathrm{d} \tau   \mathrm{d} \eta \mathrm{d} b \\
    &=& 2(1-\beta) e^{2 \mu_- y} \int_{0}^{\infty} \int_{0}^{t}  h(\tau; (1+\beta)b, \mu_+)  \int_{\tau}^{t} h(\eta - \tau; (1-\beta)b-y, -\mu_-) h(t-\eta;-x,-\mu_-)  \mathrm{d} \eta   \mathrm{d} \tau \mathrm{d} b \\
    &=& 2(1-\beta) e^{2 \mu_- y} \int_{0}^{\infty} \int_{0}^{t}  h(\tau; (1+\beta)b, \mu_+)  \int_{0}^{t-\tau} h(t - \tau- u; (1-\beta)b-y, -\mu_-) h(u;-x,-\mu_-)  \mathrm{d} u   \mathrm{d} \tau \mathrm{d} b \\
    &=& 2(1-\beta) e^{2 \mu_- y} \int_{0}^{\infty} \int_{0}^{t}  h(\tau; (1+\beta)b, \mu_+)  h(t - \tau; (1-\beta)b-y-x, -\mu_-)   \mathrm{d} \tau \mathrm{d} b,
\end{eqnarray*}
 where we change the order of $\tau$ and $\eta$ in the second equality and use changing variable $u=t- \eta$ in the third equality. So, 
\begin{equation*}
\begin{split}
    p(x,y,t) =& \phi(y-x;\mu_-) - e^{2\mu_- y} \phi(-x-y;\mu_-)\\
    &+ 2(1-\beta) e^{2 \mu_- y} \int_{0}^{\infty} \int_{0}^{t}  h(\tau; (1+\beta)b, \mu_+)  h(t - \tau; (1-\beta)b-y-x, -\mu_-)   \mathrm{d} \tau \mathrm{d} b.
    \end{split}
\end{equation*}

\end{proof}

\section{Remarks on the joint density in Gairat and Shcherbakov \cite{gairat2017density} } \label{app:girat}
\textbf{Based  on equation (2.3) in their paper}

In \cite{gairat2017density} the following joint density of the refracted skew Brownian motion $X_T$, its last visit to the origin $\tau$, occupation time $V$, and local times $L_T^{(0)}(X)$ is given in (2.3), where $T$ is time.

For $0 \leq v \leq t \leq T , l\geq 0$,
\begin{equation*}
   \phi_T(t,v,x,l) = \begin{cases}
       2 p h^\prime(v,lp) h^\prime(t-v,lq) h^\prime(T-t,x) e^{-\frac{m_1^2 v}{2} - 
    \frac{m_2^2(T-v)}{2} - l(m_1p - qm_2) + m_1x},  & x>0, \\
    2 q h^\prime(v,lp) h^\prime(t-v,lq) h^\prime(T-t,x) e^{-\frac{m_1^2 v}{2} - 
    \frac{m_2^2(T-v)}{2} - l(m_1p - qm_2) + m_2x},  & x<0,
   \end{cases} 
\end{equation*}
 where $h^\prime(s;y) = \frac{|y|}{\sqrt{2\pi s^3}}e^{\frac{-y^2}{2s}}$, $p = 1-q$ and $m_1$ and $m_2$ correspond to $\mu_-$ and $\mu_+$ respectively in our notation. We can write function $h$ in (\ref{func:h}) as $h(s;y,\mu) = h^{\prime} (s;y) e^{-\frac{\mu^2 s}{2}} e^{-\mu y}$.

Now, let's find the marginal distribution of $X_T$, for $x>0$:
\begin{equation*}
    \begin{split}
        \mathds{P}_T(x) &= \int_0^{\infty} \int_{0}^{T} \int_{0}^{t} \phi_T(t,v,x,l)  \mathrm{d}v \mathrm{d} t \mathrm{d} l \\
        &= \int_0^{\infty} \int_{0}^{T} \int_{0}^{t} 2 p h^\prime(v,lp) h^\prime(t-v,lq) h^\prime(T-t,x) e^{-\frac{m_1^2 v}{2} - \frac{m_2^2(T-v)}{2} - l(m_1p - qm_2) + m_1x}  \mathrm{d}v \mathrm{d} t \mathrm{d} l \\
        &= 2 p e^{m_1x}\int_0^{\infty} \int_{0}^{T} \int_{0}^{t}  h^\prime(v,lp) e^{-\frac{m_1^2 v}{2}} e^{- m_1 lp} h^\prime(t-v,lq) h^\prime(T-t,x) e^{- 
        \frac{m_2^2(T-t+t-v)}{2} + m_2 lq}  \mathrm{d}v \mathrm{d} t \mathrm{d} l \\
        &= 2 p e^{m_1x}\int_0^{\infty} \int_{0}^{T} \int_{0}^{t}  h^\prime(v,lp) e^{-\frac{m_1^2 v}{2}} e^{- m_1 lp} h^\prime(t-v,lq) e^{- 
        \frac{m_2^2(t-v)}{2}} e^{m_2 lq} h^\prime(T-t,x) e^{- 
        \frac{m_2^2(T-t)}{2}}  \mathrm{d}v \mathrm{d} t \mathrm{d} l \\
        &= 2 p e^{m_1x}\int_0^{\infty} \int_{0}^{T} \int_{0}^{t}  h(v;lp,m_1) h(t-v;lq, m_2) e^{2m_2 lq} h(T-t;x,m_2) e^{m_2 x}  \mathrm{d}v \mathrm{d} t \mathrm{d} l .
    \end{split}
\end{equation*}
Now we change the order of $v$ and $t$ in the triple integration:
\begin{equation*}
    \begin{split}
        \mathds{P}_T(x) &= 2 p e^{m_1x}\int_0^{\infty} \int_{0}^{T} \int_{v}^{T}  h(v;lp,m_1) h(t-v;lq, m_2) e^{2m_2 lq} h(T-t;x,m_2) e^{m_2 x}  \mathrm{d}t \mathrm{d} v \mathrm{d} l \\
        &= 2 p e^{m_1x}\int_0^{\infty} \int_{0}^{T} h(v;lp,m_1) e^{2m_2 lq} e^{m_2 x} \int_{v}^{T}  h(t-v;lq, m_2) h(T-t;x,m_2) \mathrm{d}t \mathrm{d} v \mathrm{d} l.
    \end{split}
\end{equation*}
Changing variable $u=T-t$ we have
\begin{equation*}
    \begin{split}
        \mathds{P}_T(x)  &= 2 p e^{m_1x}\int_0^{\infty} \int_{0}^{T} h(v;lp,m_1) e^{2m_2 lq} e^{m_2 x} \int_{0}^{T-v}  h(T-v-u;lq, m_2) h(u;x,m_2) \mathrm{d}u \mathrm{d} v \mathrm{d} l \\
        &= 2 p e^{m_1x}\int_0^{\infty} \int_{0}^{T} h(v;lp,m_1) e^{2m_2 lq} e^{m_2 x} h(T-v;lq+x,m_2)  \mathrm{d} v \mathrm{d} l \\
        &= 2 p e^{m_1x}\int_0^{\infty} \int_{0}^{T} h(v;lp,m_1) h(T-v;lq+x,-m_2) e^{-m_2x}  \mathrm{d} v \mathrm{d} l \\
        &= 2 p e^{(m_1-m_2)x}\int_0^{\infty} \int_{0}^{T} h(v;lp,m_1) h(T-v;lq+x,-m_2)  \mathrm{d} v \mathrm{d} l,
    \end{split}
\end{equation*}
which is not equal to probability density in Theorem (\ref{thm:pdf_sbm}).

for $x<0$:
\begin{equation*}
    \begin{split}
        \mathds{P}_T(x) &= \int_0^{\infty} \int_{0}^{T} \int_{v}^{T} \phi_T(t,v,x,l)  \mathrm{d}t \mathrm{d} v \mathrm{d} l \\
        &= \int_0^{\infty} \int_{0}^{T} \int_{v}^{T} 2 q h^\prime(v,lp) h^\prime(t-v,lq) h^\prime(T-t,x) e^{-\frac{m_1^2 v}{2} - \frac{m_2^2(T-v)}{2} - l(m_1p - qm_2) + m_2 x}  \mathrm{d} t \mathrm{d} v \mathrm{d} l \\
        &= 2 q \int_0^{\infty} \int_{0}^{T} \int_{v}^{T}  h^\prime(v,lp) e^{-\frac{m_1^2 v}{2}} e^{- m_1 lp} h^\prime(t-v,lq) h^\prime(T-t,x) e^{- 
        \frac{m_2^2(T-t+t-v)}{2} + m_2 lq + m_2 x}  \mathrm{d} t \mathrm{d} v \mathrm{d} l \\
        &= 2 q \int_0^{\infty} \int_{0}^{T} \int_{v}^{T}  h^\prime(v,lp) e^{-\frac{m_1^2 v}{2}} e^{- m_1 lp} h^\prime(t-v,lq) e^{- 
        \frac{m_2^2(t-v)}{2}} e^{m_2 lq} h^\prime(T-t,x) e^{- 
        \frac{m_2^2(T-t)}{2}} e^{m_2 x}  \mathrm{d} t \mathrm{d} v \mathrm{d} l \\
        &= 2 q \int_0^{\infty} \int_{0}^{T} \int_{v}^{T}  h(v;lp, m_1) h(t-v;lq , m_2)  e^{2m_2 lq} h(T-t;x,m_2)  e^{2m_2 x}  \mathrm{d} t \mathrm{d} v \mathrm{d} l \\
        &= 2 q \int_0^{\infty} \int_{0}^{T} h(v;lp, m_1) e^{2m_2 lq}  \int_{v}^{T}   h(t-v;lq , m_2)   h(T-t;-x,m_2)    \mathrm{d} t \mathrm{d} v \mathrm{d} l .
    \end{split}
\end{equation*}
By Changing of variable $u = T -t$:
\begin{equation*}
    \begin{split}
       \mathds{P}_T(x) &= 2 q \int_0^{\infty} \int_{0}^{T} h(v;lp, m_1) e^{2m_2 lq}  \int_{0}^{T-v}  h(T-v-u;lq , m_2)   h(u;-x,m_2)    \mathrm{d} u \mathrm{d} v \mathrm{d} l  \\
       &= 2 q \int_0^{\infty} \int_{0}^{T} h(v;lp, m_1) e^{2m_2 lq}  h(T-v;lq -x, m_2)  \mathrm{d} v \mathrm{d} l \\
       &= 2 q \int_0^{\infty} \int_{0}^{T} h(v;lp, m_1) e^{2 m_2 (lq-x)} e^{2 m_2 x}  h(T-v;lq -x, m_2)  \mathrm{d} v \mathrm{d} l \\
       &= 2 q e^{2 m_2 x} \int_0^{\infty} \int_{0}^{T} h(v;lp, m_1)  h(T-v;lq -x, -m_2)  \mathrm{d} v \mathrm{d} l,
    \end{split}
\end{equation*}
which is equal to probability density in Theorem \ref{thm:pdf_sbm}.

\textbf{Based  on equation (2.4) in their paper}

Based on the same paper, the joint density of the refracted skew Brownian motion ($X_T$), its last visit to the origin ($\tau$), total occupation time ($U$), and local times ($L_T^{(0)}(X)$) is given by equation (2.4):
\begin{equation*}
   \varphi_T(t,v,x,l) = \begin{cases}
       2 p h^\prime(u+t-T,lp) h^\prime(T-u,lq) h^\prime(T-t,x) e^{-\frac{m_1^2 u}{2} - 
    \frac{m_2^2(T-u)}{2} - l(m_1p - qm_2) + m_1x},  & x>0, \\
    l>0, t\leq T, T-t \leq u \leq T, & \\
    2 q h^\prime(u,lp) h^\prime(t-u,lq) h^\prime(T-t,x) e^{-\frac{m_1^2 u}{2} - 
    \frac{m_2^2(T-u)}{2} - l(m_1p - qm_2) + m_2x},  & x<0,\\
    l>0, 0 \leq u \leq t \leq T.
   \end{cases} 
\end{equation*}
Now, let's find the marginal distribution of $X_T$, for $x>0$:
\begin{equation*}
    \begin{split}
        \mathds{P}_T(x) &= \int_0^{\infty} \int_{0}^{T} \int_{T-t}^{T} \varphi_T(t,v,x,l)  \mathrm{d}u \mathrm{d} t \mathrm{d} l \\
        &= \int_0^{\infty} \int_{0}^{T} \int_{T-t}^{T} 2 p h^\prime(u+t-T,lp) h^\prime(T-u,lq) h^\prime(T-t,x) e^{-\frac{m_1^2 u}{2} -  \frac{m_2^2(T-u)}{2} - l(m_1p - qm_2) + m_1x} \mathrm{d}u \mathrm{d} t \mathrm{d} l \\
        &= 2 p e^{m_1x} \int_0^{\infty} \int_{0}^{T} \int_{T-t}^{T} h^\prime(u+t-T,lp) h^\prime(T-u,lq) h^\prime(T-t,x) e^{-\frac{m_1^2 u}{2} -  \frac{m_2^2(T-u)}{2} - l(m_1p - qm_2)} \mathrm{d}u \mathrm{d} t \mathrm{d} l \\
        &= 2 p e^{m_1x} \int_0^{\infty} \int_{0}^{T} \int_{T-t}^{T} h^\prime(u+t-T,lp) e^{\frac{-m_1^2}{2}(u+t-T)} e^{-m_1lp} h^\prime(T-u,lq) e^{-\frac{m_2^2(T-u)}{2} + lqm_2} h^\prime(T-t,x) e^{-\frac{m_1^2 (T-t)}{2} } \mathrm{d}u \mathrm{d} t \mathrm{d} l \\
        &= 2 p e^{m_1x} \int_0^{\infty} \int_{0}^{T} \int_{T-t}^{T} h(u+t-T;lp,m_1) h(T-u;lq,m_2) e^{2lqm_2} h^\prime(T-t;x,m_1) e^{m_1x} \mathrm{d}u \mathrm{d} t \mathrm{d} l .
    \end{split}
\end{equation*}
Now we change the order of $u$ and $t$ in the triple integration:
\begin{equation*}
    \begin{split}
        \mathds{P}_T(x) &= 2 p e^{2m_1x} \int_0^{\infty} \int_{0}^{T} \int_{T-u}^{T} h(u+t-T;lp,m_1) h(T-u;lq,m_2) e^{2lqm_2} h^\prime(T-t;x,m_1) \mathrm{d}t \mathrm{d} u \mathrm{d} l .
    \end{split}
\end{equation*}
Using the change of variable $v=T-t$,
\begin{equation*}
    \begin{split}
        \mathds{P}_T(x) &= 2 p e^{2m_1x} \int_0^{\infty} \int_{0}^{T} \int_{0}^{u} h(u-v;lp,m_1) h(T-u;lq,m_2) e^{2lqm_2} h^\prime(v;x,m_1) \mathrm{d}v \mathrm{d} u \mathrm{d} l \\
         &= 2 p e^{2m_1x} \int_0^{\infty} \int_{0}^{T} h(T-u;lq,-m_2)\int_{0}^{u} h(u-v;lp,m_1)  h^\prime(v;x,m_1) \mathrm{d}v \mathrm{d} u \mathrm{d} l \\
         &= 2 p e^{2m_1x} \int_0^{\infty} \int_{0}^{T} h(T-u;lq,-m_2)  h(u;lp+x,m_1)  \mathrm{d} u \mathrm{d} l ,
    \end{split}
\end{equation*}
which is equal to probability density in Theorem (\ref{thm:pdf_sbm}).

Now, let's find the marginal distribution of $X_T$, for $x<0$:
\begin{equation*}
    \begin{split}
        \mathds{P}_T(x) &= \int_0^{\infty} \int_{0}^{T} \int_{u}^{T} \varphi_T(t,v,x,l)  \mathrm{d}u \mathrm{d} t \mathrm{d} l \\
        &= \int_0^{\infty} \int_{0}^{T} \int_{u}^{T} 2 q h^\prime(u,lp) h^\prime(t-u,lq) h^\prime(T-t,x) e^{-\frac{m_1^2 u}{2} - \frac{m_2^2(T-u)}{2} - l(m_1p - qm_2) + m_2x} \mathrm{d}t \mathrm{d} u \mathrm{d} l \\
        &= 2 q \int_0^{\infty} \int_{0}^{T} \int_{u}^{T} h^\prime(u,lp) e^{-\frac{m_1^2 u}{2}-m_1lp} h^\prime(t-u,lq) e^{-\frac{m_2^2 (t-u)}{2}+m_2lq} h^\prime(T-t,x) e^{- \frac{m_2^2(T-t)}{2} + m_2x} \mathrm{d}t \mathrm{d} u \mathrm{d} l \\
        &= 2 q \int_0^{\infty} \int_{0}^{T} \int_{u}^{T} h(u;lp,m_1) h(t-u;lq, m_2) e^{2m_2lq} h(T-t;x,m_2) e^{2m_2x} \mathrm{d}t \mathrm{d} u \mathrm{d} l .
    \end{split}
\end{equation*}
Using the change of variable $v=T-t$,
\begin{equation*}
    \begin{split}
        \mathds{P}_T(x) &= 2 q \int_0^{\infty} \int_{0}^{T} \int_{0}^{T-u} h(u;lp,m_1) h(T-u-v;lq, m_2) e^{2m_2lq} h(v;x,m_2) e^{2m_2x} \mathrm{d}v \mathrm{d} u \mathrm{d} l \\
        &= 2 q \int_0^{\infty} \int_{0}^{T} h(u;lp,m_1)  e^{2m_2lq} \int_{0}^{T-u}  h(T-u-v;lq, m_2) h(v;-x,m_2) \mathrm{d}v \mathrm{d} u \mathrm{d} l \\
        &= 2 q \int_0^{\infty} \int_{0}^{T} h(u;lp,m_1)  e^{2m_2lq}  h(T-u;lq-x, m_2) \mathrm{d} u \mathrm{d} l \\
        &= 2 q e^{2m_2x}\int_0^{\infty} \int_{0}^{T} h(u;lp,m_1)    h(T-u;lq-x, m_2) e^{2m_2lq} e^{-2m_2x}  \mathrm{d} u \mathrm{d} l \\
        &= 2 q e^{2m_2x}\int_0^{\infty} \int_{0}^{T} h(u;lp,m_1)    h(T-u;lq-x, -m_2) \mathrm{d} u \mathrm{d} l,
    \end{split}
\end{equation*}
which is equal to probability density in Theorem (\ref{thm:pdf_sbm}).

\end{document}